\pdfoutput=1
\documentclass[11pt, letterpaper]{article}

\usepackage{amssymb}
\usepackage[all,cmtip]{xy}
\usepackage[width=17cm,top = 3.5cm,bottom=3.5cm]{geometry}
\usepackage{tikz}
\usepackage[all]{xy}
\usetikzlibrary{matrix,arrows,decorations.pathmorphing}
\usepackage{amsthm,amsfonts,amssymb,amsmath,amscd} % AMS packages
\usepackage{aliascnt} % Extended theorem numbering
\usepackage{mathrsfs} % Script font
\usepackage{xargs,ifthen} % Macro construction utilities

\usepackage{comment}
\usepackage{color}
%\usepackage{xypic} % Pictures/commutative diagrams
%\usepackage[pdf]{xy}
%\usepackage{tikz}
%\usetikzlibrary{calc}

%%%%%%%%%%%%%%%%%%%%%%%%%%%%%%%%%%%%%%%%%%%%%%%%%%%%
% Author notes
%%%%%%%%%%%%%%%%%%%%%%%%%%%%%%%%%%%%%%%%%%%%%%%%%%%%

%\long\def\authornote#1{\relax} % Uncomment to remove comments

%%%%%%%%%%%%%%%%%%%%%%%%%%%%%%%%%%%%%%%%%%%%%%%%%%%%
% Hyperref
%%%%%%%%%%%%%%%%%%%%%%%%%%%%%%%%%%%%%%%%%%%%%%%%%%%%

\usepackage{hyperref}
\definecolor{tocolor}{rgb}{.1,.1,.1}
\definecolor{urlcolor}{rgb}{.2,.2,.6}
\definecolor{linkcolor}{rgb}{.1,.1,.5}
\definecolor{citecolor}{rgb}{.4,.2,.1}
\hypersetup{backref=true, colorlinks=true, urlcolor=urlcolor, linkcolor=linkcolor, citecolor=citecolor}

%%%%%%%%%%%%%%%%%%%%%%%%%%%%%%%%%%%%%%%%%%%%%%%%%%%%%%%%%%%%
% Theorem environments
%%%%%%%%%%%%%%%%%%%%%%%%%%%%%%%%%%%%%%%%%%%%%%%%%%%%%%%%%%%%

\newcommandx{\thdef}[2]{
	\newaliascnt{#1}{theorem}  
	\newtheorem{#1}[#1]{#2}
	\aliascntresetthe{#1}  
	\newtheorem*{#1*}{#2}
	\expandafter\newcommand\expandafter{\csname #1autorefname\endcsname}{#2}
}

% Repeating the theorem number
\makeatletter
\newtheorem*{rep@theorem}{\rep@title}
\newcommand{\newreptheorem}[2]{%
\newenvironment{rep#1}[1]{%
 \def\rep@title{#2 \ref{##1}}%
 \begin{rep@theorem}}%
 {\end{rep@theorem}}}
\makeatother

\newtheorem{theorem}{Theorem}[section]
\newreptheorem{theorem}{Theorem}

\thdef{lemma}{Lemma}
\thdef{corollary}{Corollary}
\thdef{conjecture}{Conjecture}
\newreptheorem{conjecture}{Conjecture}
\thdef{proposition}{Proposition}
\thdef{question}{Question}

\theoremstyle{definition}
\thdef{definition}{Definition}
\thdef{notation}{Notation}

\theoremstyle{remark}
\thdef{remark}{Remark}

\theoremstyle{remark}
\thdef{ex}{Example}

\newenvironment{example}
{\begin{ex}}%
{\hfill $\blacksquare$\end{ex}}

%%%%%%%%%%%%%%%%%%%%%%%%%%%%%%%%%%%%%%%%%%%%%%%%%%%%%%
% Fonts
%%%%%%%%%%%%%%%%%%%%%%%%%%%%%%%%%%%%%%%%%%%%%%%%%%%%%%

 % definition

\newcommand{\spc}[1]{\mathsf{#1}} % For spaces
\newcommand{\shf}[1]{\mathcal{#1}} % For sheaves

%%%%%%%%%%%%%%%%%%%%%%%%%%%%%%%%%%%%%%%%%%%%%%%%%%%%%%
% Standard notations
%%%%%%%%%%%%%%%%%%%%%%%%%%%%%%%%%%%%%%%%%%%%%%%%%%%%%%

%%%%%%%%%%%%%%%%%%%%%%%%%%%%%%%%%%%%%%%%%%%%%%%%%%%%%
% Brackets
%%%%%%%%%%%%%%%%%%%%%%%%%%%%%%%%%%%%%%%%%%%%%%%%%%%%%

\newcommand{\rbrac}[1]{\left(#1\right)} % Round brackets
 % Square brackets
 % Curly brackets
 % Angle brackets

%%%%%%%%%%%%%%%%%%%%%%%%%%%%%%%%%%%%%%%%%%%%%%%%%%%%%
% Functions
%%%%%%%%%%%%%%%%%%%%%%%%%%%%%%%%%%%%%%%%%%%%%%%%%%%%%

\newcommandx{\fn}[2][2=]{#1\ifthenelse{\equal{#2}{}}{}{\!\rbrac{{#2}}}} % Functions with optional arguments
\newcommandx{\id}[2][2=]{\fn{{\rm id}_{#1}}[#2]} % The identity function \id{X} = id_X

%%%%%%%%%%%%%%%%%%%%%%%%%%%%%%%%%%%%%%%%%%%%%%%%%%%%%
% Algebra
%%%%%%%%%%%%%%%%%%%%%%%%%%%%%%%%%%%%%%%%%%%%%%%%%%%%%

\newcommand{\ext}[2][\bullet]{\spc{\Lambda}^{#1}{#2}} % exterior products
 % symmetric products
 % symmetric products
\newcommandx{\End}[2][1=]{\fn{\spc{End}_{#1}}[#2]} % endomorphisms
\newcommandx{\Hom}[2][1=]{\fn{\spc{Hom}_{#1}}[#2]} % homs
\newcommandx{\Aut}[2][1=]{\fn{\spc{Aut}_{#1}}[#2]} % automorphisms
\newcommandx{\image}[1]{\fn{\spc{img}}[#1]} % image
\renewcommandx{\ker}[1]{\fn{\spc{ker}}[#1]} % kernel
\newcommandx{\rank}[1]{\fn{\mathrm{rank}}[#1]} % rank

\newcommandx{\ann}[1]{\fn{\spc{ann}}[#1]} % annihilator of a subspace
 % adjoint action

%%%%%%%%%%%%%%%%%%%%%%%%%%%%%%%%%%%%%%%%%%%%%%%%%%%%%
% Homological algebra
%%%%%%%%%%%%%%%%%%%%%%%%%%%%%%%%%%%%%%%%%%%%%%%%%%%%%

\newcommandx{\hlgy}[3][1=\bullet,3=]{\spc{H}_{#1}^{#3}\!\rbrac{{#2}}} % Homology
\newcommandx{\cohlgy}[3][1=\bullet,3=]{\spc{H}^{#1}_{#3}\!\rbrac{{#2}}} % Cohomology
\newcommandx{\chow}[3][1=\bullet,3=]{\spc{A}^{#1}_{#3}\!\rbrac{{#2}}} % Chow ring

\newcommandx{\Ext}[3][1=\bullet,3=]{\fn{\spc{Ext}^{#1}_{#3}}[{#2}]} % Ext group
\newcommandx{\Tor}[3][1=\bullet,3=]{\fn{\spc{Tor}^{#1}_{#3}}[{#2}]} % Tor group

\newcommandx{\Pic}[1]{\fn{\spc{Pic}}[{#1}]} % Picard group
\newcommandx{\chernalg}[2][1=\bullet]{\fn{\spc{Chern}^{#1}}[{#2}]} % Chern algebra
\newcommandx{\chern}[2][1=]{\fn{c_{#1}}[#2]} % Chern class
\newcommandx{\ch}[2][1=]{\fn{\mathrm{ch}_{#1}}[{#2}]} % Chern character

%%%%%%%%%%%%%%%%%%%%%%%%%%%%%%%%%%%%%%%%%%%%%%%%%%%%
% Sheaves
%%%%%%%%%%%%%%%%%%%%%%%%%%%%%%%%%%%%%%%%%%%%%%%%%%%%

\newcommandx{\sKer}[2][1=]{ \fn{ \shf{K}er_{#1}}[{#2}] } % Kernel sheaf
\newcommandx{\sHom}[2][1=]{ \fn{ \shf{H}om_{#1}}[{#2}] } % Hom sheaf
\newcommandx{\sEnd}[2][1=]{ \fn{ \shf{E}nd_{#1}}[{#2}] } % End sheaf
\newcommandx{\sExt}[3][1=\bullet,3=]{\fn{\shf{E}xt^{#1}_{#3}}[{#2}]} % Ext sheaf
\newcommandx{\sTor}[3][1=\bullet,3=]{\fn{\shf{T}or^{#1}_{#3}}[{#2}]} % Tor sheaf

\newcommandx{\forms}[2][1=\bullet]{\Omega^{#1}_{#2}} % Differential forms
\newcommandx{\can}[1][1=]{\omega_{#1}} % canonical bundle
\newcommandx{\acan}[1][1=]{\omega_{#1}^{-1}} % anti-canonical bundle
\newcommandx{\tshf}[1]{\shf{T}_{#1}} % Tangent sheaves
\newcommandx{\mvect}[2][1=\bullet]{ \ext[#1]{\tshf{#2}} }% Multivector fields
\newcommandx{\der}[2][1=\bullet]{\mathscr{X}^{#1}_{#2}} % Multivector fields
\newcommandx{\sJet}[3][1=,2=]{\shf{J}^{#1}_{#2}#3} % Jet sheaf

%%%%%%%%%%%%%%%%%%%%%%%%%%%%%%%%%%%%%%%%%%%%%%%%%%%%
% Differential geometry
%%%%%%%%%%%%%%%%%%%%%%%%%%%%%%%%%%%%%%%%%%%%%%%%%%%%

\newcommandx{\tb}[2][1=]{\spc{T}_{\!#1}{#2}} % Tangent bundle
\newcommandx{\ctb}[2][1=]{\spc{T}_{\!#1}^*{#2}} % Cotangent bundle
\newcommandx{\lie}[2][2=]{\fn{\mathscr{L}_{#1}}[#2]} % Lie derivative
\newcommandx{\hook}[2][2=]{\fn{i_{#1}}[#2]} % Interior product
 %\frac{\partial}{\partial{#1}}} % coordinate vector field
 % ordinary derivative
 % partial derivative

%%%%%%%%%%%%%%%%%%%%%%%%%%%%%%%%%%%%%%%%%%%%%%%%%%%%
% Paper-specific definitions
%%%%%%%%%%%%%%%%%%%%%%%%%%%%%%%%%%%%%%%%%%%%%%%%%%%%

% words

% special functions

% spaces

 % principal bundle, overwrites paragraph symbol

%\newcommand{\U}{\spc{U}}

%\newcommand{\C}{\spc{C}}

%\newcommand{\J}{\spc{J}}

%\newcommand{\T}{\spc{T}}

% sheaves/bundles

 % ideal sheaf

 % vertical vector fields

% operators

% birational geometry

 %elementary modification
%_{\mathrm{gpd}}} % groupoid blow-up
 % algebroid modification

% algebroids

% groupoids

%\newcommand{\G}{\spc{G}}

%groupoid identity

 % Lie functor

% categories

\newcommand{\Rep}[1]{\fn{\cat{R}ep}[#1]}

% Proper transform
\makeatletter
\newcommand{\thickbar}{\mathpalette\@thickbar}
\newcommand{\@thickbar}[2]{{#1\mkern1.5mu\vbox{
  \sbox\z@{$#1\mkern-1mu#2\mkern-1mu$}%
  \sbox\tw@{$#1\overline{#2}$}%
  \dimen@=\dimexpr\ht\tw@-\ht\z@-.6\p@\relax
  \hrule\@height.4\p@ % adjust for the desired rule thickness
  \vskip1\p@
  \hrule\@height.4\p@ % adjust for the desired rule thickness
  \vskip\dimen@
  \box\z@}\mkern1.5mu}
}
\makeatother

% Gil 

%\mathcal{N}}

%\newcommand{\minus}{\setminus}

%\newcommand{\gcs}{generalized complex structure}

%\newcommand{\gcm}{generalized complex manifold}

%\newcommand{\ol}{\overline}

% Francis

\def\Rep{\text{Rep}}

\def\ker{\text{ker}}

\def\End{\text{End}}
\def\log{\text{log}}

\numberwithin{equation}{section}
\newtheoremstyle{parag}
  {\topsep}   % ABOVESPACE
  {\topsep}   % BELOWSPACE
  {}  % BODYFONT
  {}       % INDENT (empty value is the same as 0pt)
  {\bfseries} % HEADFONT
  {.}         % HEADPUNCT
  { } % HEADSPACE
  {}          % CUSTOM-HEAD-SPEC
\theoremstyle{parag}

\usepackage{MnSymbol}

\makeatletter
\def\@cite#1#2{{\normalfont[{#1\if@tempswa , #2\fi}]}}
\makeatother

% \usepackage[no-math]{fontspec}
% \usepackage{sfmath}
% \defaultfontfeatures{Mapping=tex-text}
% \setromanfont[Scale=1.2]{Courier} % simons center has no minion pro :(
% \linespread{1.3}
% \usepackage[noendash]{mathastext}

% SECTION AND SUBSECTION FONTS
% \usepackage{sectsty} 
% \sectionfont{\upshape\normalsize}
% \subsectionfont{\mdseries\scshape\normalsize} 
% \subsubsectionfont{\mdseries\upshape\normalsize} 

\renewcommand{\Pic}{\mathrm{Pic}}

\def\Aut{\text{Aut}}

\begin{document}

\title{\vspace{-4em} \huge Normal forms and moduli stacks for logarithmic flat connections}

\date{}

\author{
Francis Bischoff\thanks{Exeter College and Mathematical Institute, University of Oxford; {\tt francis.bischoff@maths.ox.ac.uk }}
}
\maketitle
\abstract{We establish normal form theorems for a large class of singular flat connections on complex manifolds, including connections with logarithmic poles along weighted homogeneous Saito free divisors. As a result, we show that the moduli spaces of such connections admit the structure of algebraic quotient stacks. In order to prove these results, we introduce homogeneous Lie groupoids and study their representation theory. In this direction we prove two main results: a Jordan-Chevalley decomposition theorem, and a linearization theorem. We give explicit normal forms for several examples of free divisors, such as homogeneous plane curves, reductive free divisors, and one of Sekiguchi's free divisors.}

\tableofcontents

\section{Introduction}
In this paper we obtain normal form theorems for a large class of singular flat connections on complex manifolds, including connections with logarithmic poles along weighted homogeneous Saito free divisors. The simplest instance of these connections corresponds to linear ordinary differential equations with a Fuchsian singularity at the origin: 
\[
z \frac{ds}{dz} = A(z) s(z). 
\]
These equations have been the subject of a large literature (e.g. \cite{hukuhara1937proprietes, turrittin1955convergent, gantmacher1959theory, alllevelt1961hypergeometric, levelt1975jordan, babbitt1983formal, kleptsyn2004analytic, boalch2011riemann, bischoff2020lie}). 
In this case we recover the standard normal form theorem, which states that there is a holomorphic gauge transformation converting $A$ to a matrix of the following form:
\[
S + \sum_{i \geq 0} N_{i}z^{i},
\]
where $S$ is a constant semisimple matrix, and $N_{i}$ are constant nilpotent matrices which satisfy $[S,N_{i}] = i N_{i}$. In the above expression, $S + N_{0}$ is the \emph{linear approximation} to the holomorphic matrix $A(z)$, and the $N_{i}$ are the higher order \emph{resonant correction terms}. 

The geometry of Fuchsian singularities was studied in \cite{boalch2011riemann}, where the moduli space of framed logarithmic $G$-connections on the disc with residue in a fixed adjoint orbit was shown to admit a quasi-Hamiltonian structure and to be related to the multiplicative Brieskorn-Grothendieck-Springer resolution. As explained in \cite{SAFRONOV2016733}, this can be interpreted as saying that the moduli space of (unframed) logarithmic $G$-connections is Lagrangian in the $1$-shifted symplectic stack $[G/G]$. In this paper, we show that our moduli spaces of singular flat connections admit the structure of algebraic quotient stacks. To our knowledge, these moduli stacks have not appeared elsewhere in the literature. We expect them to admit interesting geometric structures, and to be amenable to study via the methods of non-reductive GIT \cite{MR2330155, MR3989432}, but we postpone such a detailed investigation of their geometry to future work.

As an example, consider the singular hypersurface $D \subset \mathbb{C}^3$ defined as the vanishing locus of $F_{B,5} = xy^4 + y^3 z + z^3$. A real slice of this surface is pictured below. 

\[ \centering
\includegraphics[scale=0.04]{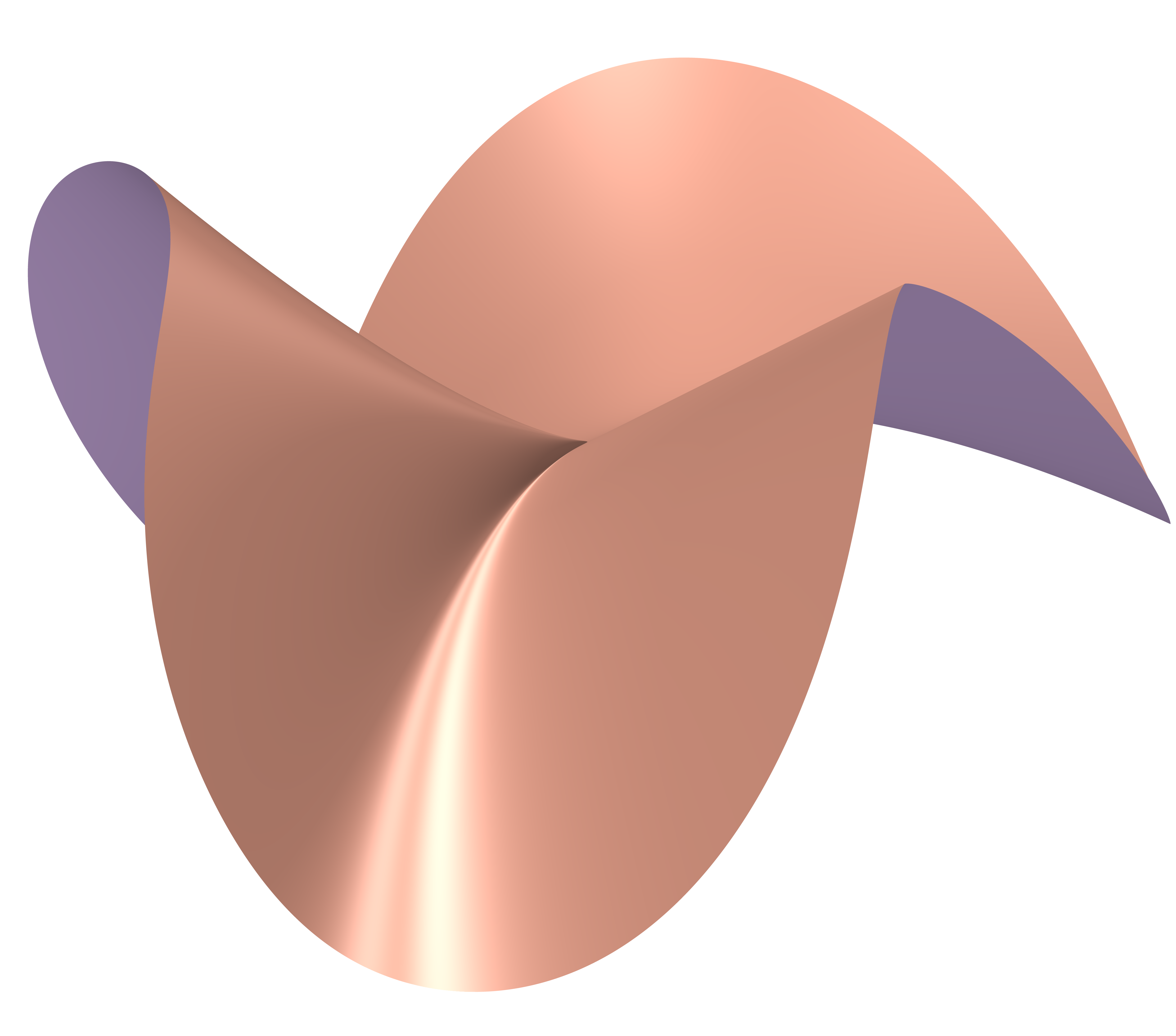}
\]

This is one of the 17 free divisors in Sekiguchi's classification \cite{MR2588504}. It is weighted homogeneous of degree $9$ with respect to the vector field 
\[
E = x \partial_{x} + 2y \partial_{y} + 3 z \partial_{z}. 
\]
The logarithmic tangent bundle $T_{\mathbb{C}^3}(-\log D)$, defined as the sheaf of vector fields tangent to the smooth locus of $D$, is locally free. Let $E, V, W$ denote a basis of logarithmic vector fields, and let $\alpha, \beta, \gamma \in \Omega^{1}_{\mathbb{C}^3}(\log D)$ denote the dual basis of logarithmic $1$-forms (see Sections \ref{SekiDiv} and \ref{Sekinormalform} for more details). A connection on $\mathbb{C}^3$ with logarithmic singularities along $D$ has the following form
\[
\nabla = d + A \otimes \alpha + B \otimes \beta + C \otimes \gamma, 
\]
where $A, B, C : \mathbb{C}^3 \to \mathfrak{h}$ are holomorphic maps valued in a reductive Lie algebra. In Section \ref{Sekinormalform} we prove the following normal form theorem.

\begin{theorem} \label{introtheorem}
Let $H$ be a connected complex reductive group with Lie algebra $\mathfrak{h}$, and let $\nabla = d + \omega$ be a flat connection on a principal $H$-bundle over $\mathbb{C}^3$ with logarithmic singularities along $D$. There exists a holomorphic gauge transformation which brings the connection into the following form 
\[
\nabla = d + (S + N) \otimes \alpha + (B - \frac{32}{3}xN) \otimes \beta + (C - 4y N) \otimes \gamma, 
\]
where $S \in \mathfrak{h}$ is a semisimple element, $N : \mathbb{C}^3 \to \mathfrak{h}$ is a holomorphic family of nilpotent elements, and $B, C : \mathbb{C}^3 \to \mathfrak{h}$ are holomorphic maps. Furthermore, these data satisfy the following equations 
\begin{enumerate}
\item $E(B) = B + [B, S]$ \label{inseki1}
\item $E(C) = 2C + [C,S] $ \label{inseki2} 
\item $E(N) = [N, S]$ \label{inseki3}
\item $V(N) = [N,B]$ \label{inseki4} 
\item $W(N) = [N,C]$ \label{inseki5} 
\item $V(C) - W(B) = 24z S + 6yB - 40 x C - [B,C].$ \label{inseki6}
\end{enumerate}
\end{theorem}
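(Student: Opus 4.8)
The plan is to deduce this from the general normal form and linearization machinery for homogeneous Lie groupoids developed earlier in the paper, specialized to the free divisor $D = \{F_{B,5} = 0\}$. First I would set up the logarithmic tangent bundle $T_{\mathbb{C}^3}(-\log D)$ explicitly: one checks that $E = x\partial_x + 2y\partial_y + 3z\partial_z$ is logarithmic (since $E(F_{B,5}) = 9 F_{B,5}$), and then exhibits a second and third generator $V, W$ of the rank-$3$ locally free sheaf of logarithmic vector fields, recording the Lie bracket relations $[E,V], [E,W], [V,W]$ as combinations of $E, V, W$ with polynomial coefficients. These structural constants are precisely what will produce the numerical coefficients $-\tfrac{32}{3}x$, $-4y$, $24z$, $6y$, $-40x$ appearing in the statement; in particular the eigenvalue data (the "$1$" and "$2$" in items \ref{inseki1}, \ref{inseki2}, and the weights in \ref{inseki3}) comes from the weighted-homogeneity of $V$ and $W$ under $E$. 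This computation is carried out in Section \ref{SekiDiv}, which I would cite.

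Next, a flat logarithmic connection $\nabla = d + A\otimes\alpha + B\otimes\beta + C\otimes\gamma$ is the same thing as a representation of the Lie algebroid $T_{\mathbb{C}^3}(-\log D)$, equivalently (after passing to the source-simply-connected integration, or directly via the homogeneous-groupoid formalism) a representation of the associated homogeneous Lie groupoid $\mathcal{G}$. The flatness of $\nabla$ translates into the Maurer--Cartan / integrability equations for the triple $(A,B,C)$, which are the curvature-zero conditions along the pairs $(E,V)$, $(E,W)$, $(V,W)$; these are the "raw" versions of the six displayed equations before any gauge normalization. I would write these three curvature equations out once, using the bracket relations from the previous step.

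The heart of the argument is then to apply the two main theorems quoted in the abstract. The weighted Euler vector field $E$ gives a grading, so the "$E$-direction" part of the connection, namely $A : \mathbb{C}^3 \to \mathfrak{h}$, is governed by an ODE of Fuchsian type in the grading parameter; the Jordan--Chevalley decomposition theorem for homogeneous groupoid representations lets me gauge $A$ into the form $S + N$ with $S$ a constant semisimple element and $N$ a holomorphic family of nilpotents satisfying $E(N) = [N,S]$ (item \ref{inseki3}) — this is the multivariable analogue of the classical Levelt/Turrittin normal form $S + \sum N_i z^i$ recalled in the introduction. After this first gauge reduction, the linearization theorem (applied to the remaining $V, W$-directions, which are "transverse" to the Euler direction) is used to kill the pure higher-order freedom and force $B$ and $C$ into the stated shape $B - \tfrac{32}{3}xN$ and $C - 4yN$; the residual terms $-\tfrac{32}{3}xN$ and $-4yN$ are exactly what is needed to keep the gauge-transformed connection flat given the bracket structure constants. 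The curvature equations along $(E,V)$ and $(E,W)$, rewritten after this normalization, become items \ref{inseki1}, \ref{inseki2}, \ref{inseki4}, \ref{inseki5}, and the curvature equation along $(V,W)$ becomes item \ref{inseki6}; here I would double-check the constants by direct substitution.

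I expect the main obstacle to be the bookkeeping in the second gauge step: one must verify that the linearization theorem can be applied \emph{after} the Jordan--Chevalley reduction without disturbing the already-achieved form $S + N$ of the $\alpha$-component, and that the specific affine corrections $-\tfrac{32}{3}xN$, $-4yN$ (rather than some other $\mathfrak{h}$-valued functions) are precisely the ones compatible with flatness. Concretely, this reduces to solving, order by order in the $E$-grading, a linear system whose solvability is guaranteed by the resonance analysis underlying the general theorems, but tracking the exact polynomial coefficients against the structure constants of $T_{\mathbb{C}^3}(-\log D)$ is where the real care is needed. Everything else — the verification that $(A,B,C) = (S+N, \ldots)$ does satisfy \ref{inseki1}--\ref{inseki6}, and conversely that any such data defines a flat logarithmic connection — is a direct, if lengthy, computation with the three curvature equations.
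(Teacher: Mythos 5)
Your high-level plan (reduce to the paper's machinery for the positive homogeneous groupoid $\Pi(\mathbb{C}^3,D)$, with the Jordan--Chevalley theorem handling the unipotent monodromy and the linearization theorem handling the semisimple part) is the right strategy, but the mechanism you give for the key step is not the paper's and, as stated, would fail. Theorem \ref{mainlinearizationthm} applies only to the action groupoid $\tilde{G}\ltimes V$ of a \emph{reductive} group; for this divisor that subgroupoid is just $\mathbb{C}\ltimes\mathbb{C}^3$, the Euler direction. It cannot be applied ``to the remaining $V,W$-directions'': those vector fields do not generate a reductive group action --- indeed Corollary \ref{notaction} shows $T_{\mathbb{C}^3}(-\log D)$ is not even an action algebroid --- so your step of ``killing the higher-order freedom to force $B$ and $C$ into the stated shape'' has no theorem behind it, and there is no order-by-order resonance argument anywhere in the paper that would supply one. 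In the actual argument (Theorem \ref{MainClassificationTheorem} together with the formula for $\nabla_{B,N}$ at the end of Section \ref{sectionalgebraicmodulistack}), one first splits off the unipotent monodromy via Theorem \ref{JCdecomposition}, writing the representation as the twist of a semisimple-monodromy representation $\phi_s$ by the cocycle of Proposition \ref{generalcocycleconst} built from the morphism $\pi$ and $N=\log U$; the linearization theorem is then applied to $r^*\phi_s$, i.e.\ along the Euler direction only, which makes the $\alpha$-component the constant $S$. The components $B,C$ are not normalized further: they are only constrained by Equations \ref{inseki1}, \ref{inseki2}, \ref{inseki6}, which express flatness of the semisimple connection $d+S\otimes\alpha+B\otimes\beta+C\otimes\gamma$, while Equations \ref{inseki3}--\ref{inseki5} say that $N$ is an infinitesimal automorphism of that connection. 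Your attribution of \ref{inseki1}, \ref{inseki2}, \ref{inseki4}, \ref{inseki5} to the $(E,V)$- and $(E,W)$-curvature equations of the final form cannot work as stated: each of those is a single equation and cannot yield the two separate identities \ref{inseki1}/\ref{inseki4} (resp.\ \ref{inseki2}/\ref{inseki5}) without invoking the decomposition structure, so the six refined equations are strictly finer than the three flatness equations you propose to compute with.

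The coefficients you flagged as the delicate bookkeeping are in fact automatic once the cocycle is used: the derivative of $\pi$ is $\frac{1}{9}d\log F_{B,5}=\alpha-\frac{32}{3}x\,\beta-4y\,\gamma$ (from $E(F_{B,5})=9F_{B,5}$, $V(F_{B,5})=-96xF_{B,5}$, $W(F_{B,5})=-36yF_{B,5}$), so twisting the semisimple normal form by $\exp\bigl(-\frac{1}{2\pi i}\pi\log U\bigr)$ simply adds $N\otimes\frac{1}{9}d\log F_{B,5}$, giving exactly $(S+N)\otimes\alpha+(B-\frac{32}{3}xN)\otimes\beta+(C-4yN)\otimes\gamma$; no linear system has to be solved and no compatibility with a prior gauge step has to be checked. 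To repair the proposal, replace the ``linearize the $V,W$-directions and solve order by order'' step by this cocycle computation, and derive the six equations from the two ingredients of the decomposition (flatness of the semisimple part; $N$ an invariant nilpotent section) rather than from the curvature of the final normal form.
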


A closer look at Equations \ref{inseki1}, \ref{inseki2}, and \ref{inseki3} in the above Theorem reveals that they are eigenvalue equations for the weight components of $N, B$ and $C$. Consequently, these must be polynomial functions of a fixed bounded degree, and they must lie in a finite dimensional affine space $W_{S}$. The remaining Equations \ref{inseki4}, \ref{inseki5} and \ref{inseki6} (along with the nilpotency of $N$) then define an algebraic subvariety $X_{S} \subset W_{S}$ which parametrises flat connections in normal form. If we fix the \emph{semisimple residue} $S$, which is uniquely determined up to the adjoint action, then the normal form in the Theorem is itself unique up to the action of a group $Aut(S)$ of residual gauge symmetries preserving $S$. We study this group in Section \ref{studyingsymmetries}, and show that it is an algebraic group with an algebraic action on $X_{S}$. Therefore, we conclude that the `moduli space' of flat connections with semisimple residue in the adjoint orbit of $S$ is the algebraic quotient stack 
\[
[X_{S}/Aut(S)].
\]
This result is established for a large class of singular flat connections in Theorems \ref{MainClassificationTheorem} and \ref{AlgebraicModuliStack}.

In order to prove the normal form theorems we introduce \emph{homogeneous groupoids} and study their representation theory. These are holomorphic Lie groupoids that are closely related to torus actions and hence can be studied using similar tools, but they also exhibit a range of interesting new behaviours. There is an abundance of examples of these groupoids. For instance, they arise from rational representations of complex algebraic groups which have a unique s-equivalence class. However, the most important examples are the twisted fundamental groupoids of weighted homogeneous Saito free divisors $(X,D)$, since the representations of these correspond to flat connections on $X$ with logarithmic singularities along $D$. There is a vast literature providing us with a rich source of such examples, such as plane curve singularities \cite{saito1980theory}, linear and reductive free divisors \cite{MR2228227, MR2521436, MR2795728, MR3237442, Linearfreeunpub}, and Sekiguchi's family of $17$ free divisors in dimension $3$ \cite{MR2588504}. 

In Section \ref{ReptheoryWHG} we establish the main structure theorems for representations of homogeneous groupoids. In Theorem \ref{JCdecomposition} we prove a Jordan-Chevalley decomposition theorem for representations, which allows us to factor out the unipotent component of the monodromy. This is a generalization of the Jordan decomposition for Fuchsian singularities due to Hukuhara \cite{MR5229}, Turrittin \cite{turrittin1955convergent} and Levelt \cite{levelt1975jordan}, and reformulated from the perspective of Lie groupoids in \cite{bischoff2020lie}. In Section \ref{LinearizationSection} we specialize to action groupoids arising from linear representations of reductive groups, and in Theorem \ref{mainlinearizationthm} we show that the representations of these groupoids admit linearizations if their monodromy is semisimple. This later theorem may be applied directly in the setting of flat connections with logarithmic singularities along reductive free divisors, and as a result, we obtain in Corollary \ref{redfreedivlinearization} a linearization theorem for these connections. 

In Section \ref{normalformtheorems} we apply the structure theory of homogeneous groupoids to obtain the normal form theorems announced at the beginning of the introduction. More precisely, in Theorem \ref{MainClassificationTheorem} we give a functorial classification for representations of homogeneous groupoids and their Lie algebroids. This implies in particular that every representation may be put into a normal form which, just as in the case of Fuchsian singularities, consists of a linear approximation to the representation along with higher order resonant correction terms. This also leads to an explicit description of the moduli space of representations, and we show in Theorem \ref{AlgebraicModuliStack} that it is a quotient stack in the category of affine algebraic varieties. In Section \ref{normalformexplicitsection}, we write down the explicit normal forms for flat connections with logarithmic singularities along Saito free divisors in the case of homogeneous plane curves, linear free divisors, and one of Sekiguchi's free divisors (Theorem \ref{introtheorem} above). These are respectively given in Theorems \ref{planecurvesingnormalform}, \ref{linearalgrepnormalform} and \ref{sekinormalform}. Finally, as a further application, we show in Corollary \ref{notaction} that the logarithmic tangent bundle of the vanishing locus $F_{B,5}$ considered above is not isomorphic to an action algebroid. 

\vspace{.05in}

\noindent \textbf{Conventions.} 
In defining the Lie algebra of a Lie group, it is customary to take left invariant vector fields. On the other hand, the Lie algebroid of a Lie groupoid is usually defined using right invariant vector fields. In order to maintain consistency, in this paper we have decided to go with the latter convention, even in the case of Lie groups. This has the slightly annoying consequence that the Lie bracket of a matrix Lie algebra is the negative of the commutator. For this reason, we will denote the commutator bracket with a subscript $c$, so that $[X, Y]_{c} = -[X,Y]. $

\vspace{.05in}

\noindent \textbf{Acknowledgements.} 
I would like to thank M. Gualtieri for many fruitful conversations and for suggesting several improvements to the paper, P. Levy for providing the proof of Proposition \ref{proofofreductive}, and L. Narv\'{a}ez Macarro for several useful correspondences and for sharing with me the unpublished paper \cite{Narvaez1}.

\section{Representation theory of holomorphic Lie groupoids} \label{Firstsection}
In this paper we will be studying the representation theory of a holomorphic Lie groupoid $\mathcal{G}$ over a complex manifold $X$. In this section we review some of the basic features of this representation theory. For a more detailed treatment, see \cite{bischoff2020lie, gualtieri2018stokes}. Throughout this paper, the groupoid representations will be valued in a structure group $H$, which is taken to be a connected complex reductive Lie group. We denote its Lie algebra $\mathfrak{h}$. 

Let $P$ be a right principal $H$-bundle over $X$. The \emph{Atiyah groupoid} of $P$ is the Lie groupoid over $X$ consisting of $H$-equivariant maps between the fibres of $P$. It is given by 
\[
\mathcal{G}(P) = Pair(P)/H.
\]
Its isotropy groupoid is the \emph{gauge groupoid} $Aut_{H}(P)$, consisting of $H$-equivariant automorphisms of the fibres. This groupoid may be constructed as an associated bundle as follows: 
\[
Aut_{H}(P) = (P \rtimes H)/H,
\]
where the action of $H$ is given by 
\[
(p, h) \ast k = (pk, k^{-1} h k). 
\]
The sections of $Aut_{H}(P)$ are known as \emph{gauge transformations}. If $P$ is the trivial bundle, then the Atiyah groupoid is simply $Pair(X) \times H$, and the gauge groupoid is $X \times H$. Changes of trivialization then act by conjugating the $H$-factor. 

The Lie algebroid of $\mathcal{G}(P)$ is the \emph{Atiyah algebroid}, whose sections consist of $H$-invariant vector fields on $P$. It is given by 
\[
At(P) = TP/H. 
\]
When $P$ is trivial, it is isomorphic to $TX \oplus \mathfrak{h}$. The Lie algebroid of the gauge groupoid is the associated bundle of Lie algebras 
\[
\mathfrak{aut}_{H}(P) = (P \rtimes \mathfrak{h})/H.
\]
When $P$ is trivial, it is isomorphic to $X \times \mathfrak{h}$.

An \emph{$H$-representation} of a groupoid $\mathcal{G}$ is defined by the data of a principal $H$-bundle $P$ and a homomorphism 
\[
\phi: \mathcal{G} \to \mathcal{G}(P). 
\]
When $P$ is the trivial bundle the representation is defined by a homomorphism $\mathcal{G} \to H$. The $H$-representations form a category which we denote $Rep(\mathcal{G}, H)$. 

Let $A$ be the Lie algebroid of $\mathcal{G}$. Any $H$-representation $\phi$ of $\mathcal{G}$ may be differentiated to give a representation of $A$:
\[
d\phi : A \to At(P). 
\]
In this way, we obtain a functor 
\[
Rep(\mathcal{G}, H) \to Rep(A, H),
\]
where $Rep(A, H)$ is the category of $H$-representations of the Lie algebroid $A$. When $\mathcal{G}$ is source-simply connected, Lie's second theorem \cite{mackenzie2000integration, moerdijk2002integrability} implies that this functor gives an isomorphism of categories. 

Let $(P, \phi)$ be a representation of $\mathcal{G}$. The Atiyah groupoid of $P$ acts on the gauge groupoid $Aut_{H}(P)$ by conjugation, and the map $\phi$ induces an action of $\mathcal{G}$ on $Aut_{H}(P)$. By differentiating this action, we also obtain an action of $\mathcal{G}$ on the bundle of Lie algebras $\mathfrak{aut}_{H}(P)$. An automorphism of $\phi$ is equivalent to a section of $Aut_{H}(P)$ (i.e. a gauge transformation), which is invariant under the $\mathcal{G}$-action. Similarly, an endomorphism (or infinitesimal automorphism) of $\phi$ is an invariant section of $\mathfrak{aut}_{H}(P)$. 

\section{Homogeneous groupoids} \label{whg}
In this section, we introduce \emph{homogeneous groupoids}, which are the main object of study in this paper. We begin with the definitions and then give examples. 

Let $X$ be a complex manifold equipped with a holomorphic action of $(\mathbb{C}^*)^{k}$. We may form the action groupoid $(\mathbb{C}^*)^{k} \ltimes X$, a groupoid over $X$ with the following structure maps:
\begin{align*}
t(\lambda, x) = \lambda \ast x, \qquad s(\lambda, x) = x, \\
m((\lambda, \mu \ast x), (\mu, x) )= (\lambda \mu, x). 
\end{align*}
The source simply connected cover is the action groupoid $\mathbb{C}^{k} \ltimes X$, where $\mathbb{C}^{k}$ acts via the exponential map. The kernel of the exponential map is the lattice $2 \pi i \mathbb{Z}^k \subset \mathbb{C}^k$. Therefore we have the injective map
\[
j: \mathbb{Z}^{k} \times X \to \mathbb{C}^{k} \ltimes X,
\]
whose image is purely contained in the isotropy. We also have a natural projection homomorphism 
\[
p : \mathbb{C}^{k} \ltimes X \to \mathbb{C}^k, \qquad (\lambda, x) \mapsto \lambda. 
\]
Consider a source-connected holomorphic Lie groupoid $\mathcal{G}$ over $X$ which is equipped with two morphisms 
\begin{itemize}
\item $i : \mathbb{C}^{k} \ltimes X \to \mathcal{G}$, 
\item $\pi : \mathcal{G} \to \mathbb{C}^k$,
\end{itemize}
satisfying $\pi \circ i = p$. We have the inclusion $i \circ j : \mathbb{Z}^k \times X \to \mathcal{G}$ as an isotropic subgroupoid.

\begin{definition}
The data $(\mathcal{G}, i, \pi)$ is called \emph{central} if $\mathbb{Z}^k \times X $ is a central subgroupoid of $\mathcal{G}$, meaning that for all $n \in \mathbb{Z}^k$ and $g \in \mathcal{G}$ the following identity holds:
\[
(n, t(g)) \ast g = g \ast (n, s(g)). 
\]
\end{definition}

The following result gives a practical way of checking the centrality condition. 
\begin{lemma}  \label{centralitycrit}
Let $A$ be the Lie algebroid of $\mathcal{G}$ and suppose that the anchor map $\rho: A \to TX$ is generically injective. Then $(\mathcal{G}, i, \pi)$ is central. 
\end{lemma}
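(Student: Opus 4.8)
The plan is to encode centrality as the statement that a certain family of conjugation self-maps of $\mathcal{G}$ is the identity, to reduce this to an infinitesimal identity by means of the hypothesis on the anchor, and then to integrate using source-connectedness.

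For $n \in \mathbb{Z}^k$, let $c_n = i \circ j(n, -) : X \to \mathcal{G}$; since $j(n,-)$ takes values in the isotropy of $\mathbb{C}^k \ltimes X$, each $c_n(x)$ lies in the isotropy group $\mathcal{G}_x^x$, and $c_n(x)^{-1} = c_{-n}(x)$ because $i$ and $j$ are morphisms of groupoids. Define $\Phi_n : \mathcal{G} \to \mathcal{G}$ by $\Phi_n(g) = c_n(t(g)) \ast g \ast c_n(s(g))^{-1}$. This is a holomorphic map covering $\mathrm{id}_X$ and fixing the units, and a short computation using $c_n(s(g_1))^{-1} \ast c_n(t(g_2)) = 1_{s(g_1)}$ whenever $s(g_1) = t(g_2)$ shows that $\Phi_n$ is a morphism of Lie groupoids. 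By the very definition of centrality, $(\mathcal{G}, i, \pi)$ is central if and only if $\Phi_n = \mathrm{id}_{\mathcal{G}}$ for every $n \in \mathbb{Z}^k$.

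First I would differentiate: a groupoid morphism over $\mathrm{id}_X$ fixing the units differentiates to a Lie algebroid morphism $d\Phi_n : A \to A$ over $\mathrm{id}_X$, and such a morphism is automatically anchor-compatible, $\rho \circ d\Phi_n = \rho$ (equivalently, differentiate the identity $t \circ \Phi_n = t$ along the source fibres at a unit). On the dense open subset of $X$ where $\rho$ is injective this forces $d\Phi_n$ to be the identity there, and since $d\Phi_n - \mathrm{id}_A$ is a holomorphic section of $\mathrm{End}(A)$ it vanishes identically; thus $d\Phi_n = \mathrm{id}_A$. To conclude, pass to the source-simply-connected cover $q : \widetilde{\mathcal{G}} \to \mathcal{G}$ (available since $\mathcal{G}$ is source-connected), a surjective groupoid morphism over $\mathrm{id}_X$ with $dq = \mathrm{id}_A$. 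Then $\Phi_n \circ q$ and $q$ are morphisms $\widetilde{\mathcal{G}} \to \mathcal{G}$ inducing the same Lie algebroid morphism $\mathrm{id}_A$, so they coincide by the uniqueness clause of Lie's second theorem \cite{mackenzie2000integration, moerdijk2002integrability}; surjectivity of $q$ then gives $\Phi_n = \mathrm{id}_{\mathcal{G}}$, as desired.

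The only genuine difficulty is this last step, namely upgrading the infinitesimal identity $d\Phi_n = \mathrm{id}_A$ to the global identity $\Phi_n = \mathrm{id}_{\mathcal{G}}$: this is where source-connectedness enters, and the hypothesis that $\rho$ is generically injective is used precisely to turn the automatic relation $\rho \circ d\Phi_n = \rho$ into $d\Phi_n = \mathrm{id}_A$. The remaining verifications — that $\Phi_n$ is a holomorphic morphism of Lie groupoids, and that differentiation of morphisms over $\mathrm{id}_X$ is anchor-compatible — are routine.
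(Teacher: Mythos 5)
Your proof is correct, but it takes a genuinely different route from the paper. The paper works directly at the groupoid level over the dense open set $U$ where $\rho$ is injective: there the isotropy groups are discrete, so the isotropy subgroupoid over an orbit is a covering space, and unique path lifting (applied to the paths $g_t (n,x) g_t^{-1}$ and $(n, t(g_t))$ over a path $g_t$ in a source fibre) shows that conjugation fixes the lattice sections over $U$; centrality on all of $\mathcal{G}$ then follows by a continuity argument inside $\mathcal{G}$, with an extra remark to handle possible non-Hausdorffness by restricting to a Hausdorff neighbourhood of the identity bisection. You instead encode centrality as the identity $\Phi_n = \mathrm{id}_{\mathcal{G}}$ for the conjugation morphisms, pass to the infinitesimal level where $\rho \circ d\Phi_n = \rho$ together with generic injectivity and continuity of $d\Phi_n - \mathrm{id}_A$ as a section of $\mathrm{End}(A)$ over the (Hausdorff) base forces $d\Phi_n = \mathrm{id}_A$, and then integrate using the uniqueness clause of Lie's second theorem via the source-simply-connected cover. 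What each approach buys: the paper's argument is elementary and self-contained (covering-space topology, no integration theorems), while yours is structurally cleaner on the non-Hausdorff issue, since your closure argument happens in a vector bundle over $X$ rather than in $\mathcal{G}$, at the cost of invoking Lie II uniqueness — machinery the paper uses freely elsewhere, and which does hold in the holomorphic, possibly non-Hausdorff setting with Hausdorff source fibres. Two minor remarks: uniqueness of morphisms with a given differential already holds for a source-connected domain, so the detour through $\widetilde{\mathcal{G}}$ (whose role is only to make $q$ surjective, which itself uses source-connectedness of $\mathcal{G}$) could be streamlined; and it is worth saying explicitly that $\Phi_n$ is holomorphic because $c_n = i \circ j(n,-)$ and the groupoid operations are.
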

\begin{proof}
By assumption, $\rho$ is injective on a dense subset $U \subseteq X$, which is automatically open and saturated. Let $O \subseteq U$ be an orbit, and let $s^{-1}(x) \subseteq \mathcal{G}$ be a source fibre for $x \in O$. Then $t: s^{-1}(x) \to O$ is a principal $\mathcal{G}(x,x)$-bundle, where $\mathcal{G}(x,x)$ is the isotropy group at $x$. This group is discrete since $\rho = dt|_{s^{-1}(x)}$ is injective. The restriction of $\mathcal{G}$ to the orbit $O$ is isomorphic to the Atiyah groupoid of $s^{-1}(x)$, namely $\mathcal{G}|_{O} \cong \mathcal{G}(s^{-1}(x))$. Hence the isotropy subgroupoid over $O$ is isomorphic to the gauge groupoid 
\[
Aut_{\mathcal{G}(x,x)}(s^{-1}(x)).
\]
This is a locally trivial bundle of discrete groups. In particular, it is a covering space of $O$ and hence satisfies the path lifting property. 

Now given $g \in s^{-1}(x)$, we can consider a path $g_{t}: [0,1] \to s^{-1}(x)$, such that $g_{0} = id_{x}$ and $g_{1} = g$. Given $n \in \mathbb{Z}^k$, consider the path of isotropy elements $g_{t}(n, x) g_{t}^{-1}$ which goes from $(n, x)$ to $g(n,x)g^{-1}$, and which satisfies $t(g_{t}(n, x) g_{t}^{-1}) = t(g_{t})$. Consider as well the path $(n, t(g_{t}))$, which also starts at $(n,x)$ and covers $t(g_{t})$. By unique path lifting, these two paths must coincide, showing that 
\[
g(n,x)g^{-1} = (n, t(g)). 
\]
Hence, $\mathbb{Z}^k$ is central in $\mathcal{G}|_{U}$. By continuity, $\mathbb{Z}^k$ must be central in $\mathcal{G}$. This works even when $\mathcal{G}$ is not Hausdorff since we can restrict to a Hausdorff neighbourhood of the identity bisection. 
\end{proof}

The decomposition of $X$ into the orbits of $\mathcal{G}$ defines an equivalence relation on $X$. We define a `weaker' equivalence relation, where two points $x, y \in X$ are said to be \emph{s-equivalent}, denoted $x \sim y$, if the closures of their orbits intersect (i.e. we take the equivalence relation generated by this relation). 

\begin{definition}
The data $(\mathcal{G}, i, \pi)$ defines a \emph{homogeneous groupoid} if it is central and if $X$ has a unique s-equivalence class. 
\end{definition}

There are two important special cases where $X$ has a unique s-equivalence class: 
\begin{enumerate}
\item There is a point $x_{0} \in X$ which is in the closure of all orbits. 
\item There is a dense orbit. 
\end{enumerate}

Suppose that $X = V$ is a vector space and that the action of $(\mathbb{C}^*)^{k}$ is linear, so that $V$ is a representation. A homogeneous groupoid $(\mathcal{G}, i, \pi)$ is \emph{positive} if there is a factor $\mathbb{C}^{*} \subseteq (\mathbb{C}^*)^{k}$ whose action on $V$ has only positive weights. Note that in this case, $V$ automatically has a unique s-equivalence class because the origin lies in the closure of every orbit. 

We end by noting the following immediate consequence of the definition of a homogeneous groupoid. 
\begin{proposition} \label{liftedactionongpd}
Let $(\mathcal{G}, i, \pi)$ be a homogeneous groupoid. Then the conjugation action of $\mathbb{C}^{k}$ descends to an action of $(\mathbb{C}^{*})^{k}$ on $\mathcal{G}$ by Lie groupoid automorphisms. Furthermore, the morphism $\pi$ is invariant under this action. 
\end{proposition}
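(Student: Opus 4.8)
\noindent\emph{Proof proposal.} The plan is to realise the asserted $(\mathbb{C}^*)^k$-action as conjugation by the family of bisections of $\mathcal{G}$ coming from $i$, and then to observe that the lattice $2\pi i\mathbb{Z}^k$ acts trivially precisely because $\mathbb{Z}^k\times X$ is central. First I would record that, for each $\lambda\in\mathbb{C}^k$, the map $\sigma_\lambda:=i(\lambda,-)\colon X\to\mathcal{G}$ is a holomorphic bisection: since $i$ is a morphism of groupoids over $X$ it covers $\mathrm{id}_X$, whence $s\circ\sigma_\lambda=\mathrm{id}_X$, while $t\circ\sigma_\lambda$ is the biholomorphism $x\mapsto\exp(\lambda)\ast x$. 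Because $i$ is a groupoid morphism and multiplication in $\mathbb{C}^k\ltimes X$ reads $m\big((\lambda,\exp(\mu)\ast x),(\mu,x)\big)=(\lambda+\mu,x)$, the assignment $\lambda\mapsto\sigma_\lambda$ is a homomorphism from $(\mathbb{C}^k,+)$ to the bisection group of $\mathcal{G}$. Conjugation by a bisection, $c_\sigma(g)=\sigma(t(g))\ast g\ast\sigma(s(g))^{-1}$, is always a Lie groupoid automorphism of $\mathcal{G}$, covering $t\circ\sigma$ on the base, and satisfies $c_\sigma\circ c_{\sigma'}=c_{\sigma\sigma'}$; composing with the previous homomorphism yields an action $\lambda\mapsto c_{\sigma_\lambda}=:C_\lambda$ of $\mathbb{C}^k$ on $\mathcal{G}$ by automorphisms, holomorphic in $\lambda$ since the action map is assembled from $i$, $s$, $t$, $m$ and inversion. (Nothing so far requires $\mathcal{G}$ to be Hausdorff.)

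Next I would show that the kernel $2\pi i\mathbb{Z}^k$ of $\exp\colon\mathbb{C}^k\to(\mathbb{C}^*)^k$ — which via $j$ is the central subgroupoid $\mathbb{Z}^k\times X$ — acts trivially. For $n\in\mathbb{Z}^k$ and $g\in\mathcal{G}$ we have $C_{2\pi i n}(g)=(n,t(g))\ast g\ast(n,s(g))^{-1}$, where $(n,-)$ is read inside $\mathcal{G}$ via $i\circ j$; the centrality identity $(n,t(g))\ast g=g\ast(n,s(g))$ then gives $C_{2\pi i n}(g)=g\ast(n,s(g))\ast(n,s(g))^{-1}=g$. Hence $C_{2\pi i n}=\mathrm{id}_\mathcal{G}$ for every $n$, so the $\mathbb{C}^k$-action factors through $(\mathbb{C}^*)^k=\mathbb{C}^k/2\pi i\mathbb{Z}^k$; the resulting map $(\mathbb{C}^*)^k\times\mathcal{G}\to\mathcal{G}$ is again holomorphic because $\exp$ is a holomorphic covering. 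Note that only the centrality clause of the definition is used here — uniqueness of the $s$-equivalence class is irrelevant for this statement.

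Finally, for the invariance of $\pi$: as $\pi\colon\mathcal{G}\to\mathbb{C}^k$ is a morphism into the (abelian) group $\mathbb{C}^k$ it is multiplicative, and $\pi\circ i=p$ forces $\pi(i(\lambda,x))=\lambda$ for all $x\in X$; therefore $\pi(C_\lambda(g))=\pi(i(\lambda,t(g)))+\pi(g)-\pi(i(\lambda,s(g)))=\lambda+\pi(g)-\lambda=\pi(g)$, so $\pi$ is $(\mathbb{C}^*)^k$-invariant. I do not anticipate a real obstacle: the content is elementary once the conjugation picture is in place, and the only point needing care is keeping the lattices $\mathbb{Z}^k$ and $2\pi i\mathbb{Z}^k$ (and the identification $j$) distinct and confirming that the descent along $\exp$ stays holomorphic — the single place where the complex-analytic structure, as opposed to a purely formal argument, enters.
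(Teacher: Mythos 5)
Your proof is correct and is precisely the argument the paper leaves implicit: the paper states this proposition without proof as an "immediate consequence" of the definition, and the intended reasoning is exactly your conjugation by the bisections $\sigma_\lambda = i(\lambda,-)$, with centrality of $\mathbb{Z}^k\times X$ killing the lattice $2\pi i\mathbb{Z}^k$ and $\pi\circ i = p$ plus multiplicativity of $\pi$ giving invariance. Nothing is missing; your care in distinguishing $\mathbb{Z}^k$ from $2\pi i\mathbb{Z}^k$ and in checking holomorphy of the descended action along $\exp$ is exactly the right bookkeeping.
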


\subsection{Examples: Representations of algebraic groups}  \label{algebraicgroupex}
Let $G$ be a connected complex linear algebraic group with Lie algebra $\mathfrak{g}$, let $V$ be a representation, and let $G \ltimes V$ be the associated action groupoid, which has Lie algebroid $\mathfrak{g} \ltimes V$. The source simply connected integration of $\mathfrak{g} \ltimes V$ is given by $\mathcal{G} = \tilde{G} \ltimes V$, where $\tilde{G}$ is the universal cover of $G$.

By a result of Mostow (e.g. see page 158 of \cite{borel2012linear}) $G$ has a Levi decomposition $G \cong U \rtimes L,$ where $U$ is a connected unipotent normal subgroup (the unipotent radical of $G$), and $L$ is a connected reductive group. Up to a finite covering, the reductive group $L$ has the form $(\mathbb{C}^*)^k \times S,$ where $S$ is a semisimple group (which we can assume is simply connected). Since $U$ is contractible, it follows that the universal cover of $G$ is given by $\tilde{G} \cong U \rtimes (\mathbb{C}^k \times S).$ From this expression, it is clear that $\tilde{G}$ contains a subgroup $\mathbb{C}^k$ which acts on $V$ through its quotient $(\mathbb{C}^*)^k$ via the exponential map. Furthermore, the subgroup $2 \pi i \mathbb{Z}^k \subset \mathbb{C}^k$ is central in $\tilde{G}$ and acts trivially on $V$. We therefore get an inclusion of groupoids 
\[
i : \mathbb{C}^k \ltimes V \to \tilde{G} \ltimes V,
\]
as well as a map $\pi : \tilde{G} \ltimes V \to \mathbb{C}^k$ given by the composition of projections
\[
\tilde{G} \ltimes V \to \tilde{G} =  U \rtimes (\mathbb{C}^k \times S) \to  (\mathbb{C}^k \times S) \to \mathbb{C}^k,
\]
and it is clear that $\pi \circ i = p$. The data $(\mathcal{G}, i, \pi)$ is central and hence defines a homogeneous groupoid if $V$ has a unique s-equivalence class. This is guaranteed if we assume that $G$ has an open orbit, or that $(\mathbb{C}^*)^k$ contains a factor $\mathbb{C}^*$ whose action on $V$ has only positive weights. In the later case, $(\mathcal{G}, i, \pi)$ is a positive homogeneous groupoid. 

\subsection{Examples: Saito free divisors} \label{logconnectionschapt}
We recall the notion of free divisors, which is due to K. Saito \cite{saito1980theory}. Let $X$ be a complex manifold, let $D \subset X$ be a reduced divisor, and let $I_{D}$ be its reduced defining ideal. A holomorphic vector field $V$ is \emph{logarithmic} if it preserves $I_{D}$. Equivalently, $V$ is logarithmic if it is tangent to the smooth locus of $D$. We denote by $T_{X}(-\log D)$ the sheaf of logarithmic vector fields. It is a coherent subsheaf of the tangent bundle which is closed under the Lie bracket. In general $T_{X}(-\log D)$ is not locally free, and when it is we call $D$ a \emph{free divisor}. Saito gave the following simple criterion for determining whether a divisor is free. 
\begin{theorem}[Saito's criterion \cite{saito1980theory}] \label{SaitoCrit}
A hypersurface $D \subset \mathbb{C}^n$ is a free divisor in a neighbourhood of a point $p$ if and only if there are $n$ germs of logarithmic vector fields 
\[
V_{i} = \sum_{j} a_{ij}(z) \partial_{z_{j}}, \ \ i = 1, ..., n,
\]
where $(z_{1}, ..., z_{n})$ are the standard coordinates on $\mathbb{C}^n$, such that $det(a_{ij}(z))$ is a reduced equation for $D$ around $p$. In this case, the vector fields $V_{i}$ form a basis of $T_{X}(-\log D)$. 
\end{theorem}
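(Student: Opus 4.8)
The plan is to work locally near $p$: write $\mathcal{O}$ for the local ring $\mathcal{O}_{\mathbb{C}^n,p}$, choose a reduced equation $f\in\mathcal{O}$ for the germ of $D$ (so the reduced ideal $I_D$ equals $(f)$), and let $\mathcal{D}\subset\mathcal{O}^n$ denote the $\mathcal{O}$-module of germs of logarithmic vector fields, i.e.\ those $V=\sum_j c_j\partial_{z_j}$ with $V(f)\in(f)$. I would open with two elementary observations. First, $\mathcal{D}$ has rank $n$, since it contains the $\mathcal{O}$-independent fields $f\partial_{z_1},\dots,f\partial_{z_n}$ and sits inside $\mathcal{O}^n$. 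Second, and this is the geometric heart of the matter, at every smooth point $q\in D$ each logarithmic field is tangent to $D$: if $V(f)=bf$ then $df_q(V(q))=b(q)f(q)=0$, so $V(q)\in T_qD$, a subspace of dimension $n-1$. Combined with the density of the smooth locus $D_{\mathrm{reg}}$ in $D$ and with reducedness of $f$, this yields the principle I will use twice: a holomorphic function vanishing on $D_{\mathrm{reg}}$ lies in $I_D=(f)$.

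For the ``if'' direction, suppose $V_1,\dots,V_n$ are logarithmic with coefficient matrix $A=(a_{ij})$ and $\det A=uf$ for a unit $u$. Since $\det A\neq0$ and $\mathcal{O}$ is a domain, the $V_i$ are $\mathcal{O}$-linearly independent. To show they generate $\mathcal{D}$, take $W=\sum_j c_j\partial_{z_j}\in\mathcal{D}$ and solve $W=\sum_i g_iV_i$ by Cramer's rule: $g_i=\det(M_i)/\det(A)$, where $M_i$ is obtained from $A$ by replacing its $i$-th row with $(c_1,\dots,c_n)$. Now $M_i$ is the coefficient matrix of the tuple $(V_1,\dots,V_{i-1},W,V_{i+1},\dots,V_n)$, and at a smooth point $q\in D$ these are $n$ vectors lying in the $(n-1)$-dimensional space $T_qD$, so $\det M_i(q)=0$. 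Hence $\det M_i$ vanishes on $D_{\mathrm{reg}}$, so $f\mid\det M_i$ and $g_i\in\mathcal{O}$. Thus $V_1,\dots,V_n$ is an $\mathcal{O}$-basis of $\mathcal{D}$, and in particular $D$ is free.

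For the ``only if'' direction, assume $D$ is free and fix a basis $V_1,\dots,V_n$ of $\mathcal{D}$ with coefficient matrix $A$ and $\Delta:=\det A$; note $\Delta$ is well defined up to a unit (a change of basis multiplies $A$ by an element of $\mathrm{GL}_n(\mathcal{O})$) and $\Delta\neq0$ because $\mathcal{D}$ and $\mathcal{O}^n$ have equal rank. Because $f\partial_{z_j}\in\mathcal{D}$ for all $j$, there is a matrix $C$ over $\mathcal{O}$ with $CA=fI_n$, whence $\det(C)\,\Delta=f^n$ and $\Delta\mid f^n$. Evaluating $A$ at a smooth point $q\in D$ gives $n$ vectors in $T_qD$, so $\Delta(q)=0$ and therefore $f\mid\Delta$ by the vanishing principle. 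Writing $f=f_1\cdots f_r$ with the $f_i$ distinct irreducibles, these divisibilities force $\Delta=u\,f_1^{m_1}\cdots f_r^{m_r}$ with $u$ a unit and each $m_i\geq1$. To conclude $\Delta=uf$ I would show each $m_i=1$ by computing the order of vanishing of $\Delta$ along $\{f_i=0\}$ at a generic (hence $D$-smooth) point $q$ of that component, where in suitable local coordinates $D=\{x_1=0\}$ and $f_i\doteq x_1$: there $\mathcal{D}_q$ is freely generated by $x_1\partial_{x_1},\partial_{x_2},\dots,\partial_{x_n}$, whose coefficient determinant is $x_1$, and since (after shrinking) the $V_i$ generate $\mathcal{D}$ near $p$ they also form a basis of $\mathcal{D}_q$, forcing $\Delta\doteq x_1$ near $q$, i.e.\ $m_i=1$. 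Hence $\Delta=uf$ is a reduced equation for $D$, and the chosen $V_i$ are the required basis.

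The main obstacle — and precisely the place where reducedness of $D$ is indispensable — is the step promoting ``$\det M_i$ (resp.\ $\Delta$) vanishes on the smooth locus of $D$'' to ``$f$ divides $\det M_i$ (resp.\ $\Delta$)''; without reducedness one only controls these determinants up to radical, and the sharp conclusion fails. The other delicate point is the multiplicity-one computation in the converse direction, which relies on the fact that forming $\mathcal{D}$ commutes with localization, so that the order of vanishing of $\Delta$ along an irreducible component of $D$ can be checked at a generic smooth point.
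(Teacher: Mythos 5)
The paper does not prove this statement---it is quoted directly from Saito's work \cite{saito1980theory}---so there is no internal proof to compare against; your argument must stand on its own, and it does. It is essentially Saito's classical proof: for the ``if'' direction, Cramer's rule together with the observation that logarithmic fields are tangent to $D$ at its smooth points forces the auxiliary determinants $\det M_i$ to vanish on $D_{\mathrm{reg}}$, and reducedness of $f$ upgrades this to divisibility by $f$, so the coefficients $g_i$ are holomorphic and the $V_i$ form a basis; for the converse, the relations $\Delta \mid f^n$ (from $CA=fI_n$) and $f\mid\Delta$ (tangency again), combined with the multiplicity-one computation at a generic smooth point of each irreducible component using the local basis $x_1\partial_{x_1},\partial_{x_2},\dots,\partial_{x_n}$, give $\Delta = uf$ with $u$ a unit. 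The two points you rightly flag as delicate are indeed the ones that need the standard background facts: density of $D_{\mathrm{reg}}$ in $D$ plus reducedness of $f$ to pass from vanishing on the smooth locus to membership in $(f)$, and coherence of $T_{X}(-\log D)$ so that a basis of the stalk at $p$ remains a basis of nearby stalks (justifying the comparison of $\Delta$ with $x_1$ at a generic smooth point and the unit-ambiguity of determinants under change of basis and coordinates). With those acknowledged, the proof is complete and matches the standard argument in the literature.
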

We now assume that $D$ is a free divisor. In this case, $T_{X}(-\log D)$ is a Lie algebroid, called the \emph{logarithmic tangent bundle} of the pair $(X,D)$. By definition, its anchor map is an injection on sections. Therefore, by a criterion of Debord \cite[Theorem 2]{debord2001holonomy} (see also \cite{crainic2003integrability}), it is integrable to a Lie groupoid. There is a unique integration $\Pi(X,D)$ with simply connected source fibres \cite{moerdijk2002integrability}. It is called the \emph{twisted fundamental groupoid} of the pair $(X,D)$. 

Representations of $T_{X}(- \log D)$ are called flat connections on $X$ with logarithmic singularities along $D$ (also called flat logarithmic connections when $D$ is understood). As explained in Section \ref{Firstsection} these are equivalent to representations of $\Pi(X,D)$.

Suppose that $X$ is connected and equipped with an action of $\mathbb{C}^*$, and that $I_{D}$ is generated by a global holomorphic function $f: X \to \mathbb{C}$. Let $E$ be the \emph{Euler vector field}, which is the generator of the $\mathbb{C}^*$-action. We say that $D$ is a \emph{weighted homogeneous free divisor} if $E(f) = c f$, for some non-zero constant $c$. In this case, $E \in T_{X}(-\log D)$ and the action of $\mathbb{C}^*$ preserves $D$. 

\begin{theorem} \label{whgconst}
Let $D \subset X$ be a weighted homogeneous free divisor. Then its twisted fundamental groupoid $\Pi(X,D)$ is a homogeneous Lie groupoid.
\end{theorem}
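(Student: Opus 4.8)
The plan is to exhibit the two morphisms $i$ and $\pi$ turning $\Pi(X,D)$ into a homogeneous groupoid (with $\mathbb{C}^*$ acting on $X$ via the given weighted-homogeneous action, i.e. $k=1$), and then to verify centrality and the unique-s-equivalence-class condition. Throughout write $A := T_X(-\log D)$, which is integrated by the source-simply-connected groupoid $\Pi(X,D)$ (its existence being guaranteed by Debord's criterion as recalled above); all the groupoid morphisms will be produced by differentiating to $A$ and invoking Lie's second theorem, using that $\Pi(X,D)$ and $\mathbb{C}\ltimes X$ are source-simply-connected.

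To build $i$: since $E(f) = cf \in I_D$, the Euler vector field $E$ is logarithmic, i.e. $E \in \Gamma(A)$, so the $\mathbb{C}^*$-action preserves $D$ and $\mathbb{C}\ltimes X$ is well defined. The infinitesimal action gives a morphism of Lie algebroids $\underline{\mathbb{C}} \to A$, $1 \mapsto E$, where $\underline{\mathbb{C}}$ is the (abelian) Lie algebroid of $\mathbb{C}\ltimes X$. Since $\mathbb{C}\ltimes X$ is the source-simply-connected integration of $\underline{\mathbb{C}}$, Lie's second theorem integrates this to a morphism $i : \mathbb{C}\ltimes X \to \Pi(X,D)$.

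To build $\pi$: the crucial observation is that $\theta := \tfrac{1}{c}\tfrac{df}{f} = \tfrac{1}{c}\, d\log f$ is a closed logarithmic $1$-form, hence a $d_A$-closed section of $A^* = \Omega^1_X(\log D)$; and closed logarithmic $1$-forms are precisely the Lie algebroid morphisms $A \to \mathbb{C}$ into the abelian Lie algebra $\mathbb{C}$, viewed as a Lie algebroid over a point (the only condition is the Maurer--Cartan equation, which reduces to closedness in the abelian case). Integrating via Lie's second theorem applied to the source-simply-connected $\Pi(X,D)$ gives $\pi : \Pi(X,D) \to \mathbb{C}$. The identity $\pi\circ i = p$ is checked infinitesimally: $d(\pi\circ i)$ and $dp$ are both morphisms $\underline{\mathbb{C}} \to \mathbb{C}$, and they agree since $d(\pi\circ i)(1) = \theta(E) = \tfrac1c\tfrac{E(f)}{f} = 1 = dp(1)$; source-connectedness of $\mathbb{C}\ltimes X$ and the uniqueness clause of Lie's second theorem then upgrade this to equality of the groupoid morphisms.

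Finally, centrality follows at once from Lemma \ref{centralitycrit}: the anchor $\rho : A \to TX$ is an isomorphism over $X\setminus D$ and hence generically injective. For the s-equivalence condition, I would observe that $X\setminus D$ is a single dense orbit of $\Pi(X,D)$: it is open and dense, it is connected because $D$ has real codimension $2$ in the connected manifold $X$, and $\rho$ restricts there to an isomorphism, so the orbit through any point of $X\setminus D$ is all of $X\setminus D$. Thus we are in case (2) above, there is a dense orbit, and $X$ has a unique s-equivalence class. Combining these points, $(\Pi(X,D), i, \pi)$ is a homogeneous groupoid. I expect the only delicate point to be purely formal, namely ensuring that the morphisms produced by Lie's second theorem target $\Pi(X,D)$ itself and that $\pi\circ i$ equals $p$ on the nose rather than up to homotopy, both of which are controlled by the uniqueness part of Lie II; the one genuinely new ingredient is the identification of $\tfrac1c d\log f$ as the closed logarithmic $1$-form inducing $\pi$.
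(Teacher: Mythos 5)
Your proof is correct and follows essentially the same route as the paper: you build $i$ by integrating the algebroid morphism $1\mapsto E$, build $\pi$ by integrating the closed logarithmic $1$-form $\tfrac1c d\log f$, verify $\pi\circ i=p$ infinitesimally, and conclude centrality from Lemma \ref{centralitycrit} together with the dense orbit $X\setminus D$ giving a unique s-equivalence class. The extra details you supply (uniqueness in Lie II for $\pi\circ i=p$, connectedness of $X\setminus D$) are consistent with, and only slightly more explicit than, the paper's argument.
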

\begin{proof}
This proof proceeds by checking the conditions. The $\mathbb{C}^*$-action on $X$ defines an action groupoid $\mathbb{C}^{*} \ltimes X$. Its source simply connected cover is given by $\mathbb{C} \ltimes X$, and its Lie algebroid is given by $\mathbb{C} \times X$, with anchor map $\rho(\lambda ,x) = \lambda E_{x}.$ Since $E \in T_{X}(-\log D)$, the anchor map factors through a Lie algebroid morphism $\mathbb{C} \times X \to T_{X}(-\log D)$, and by Lie's second theorem \cite{mackenzie2000integration, moerdijk2002integrability} this integrates to a Lie groupoid morphism $i : \mathbb{C} \ltimes X \to \Pi(X,D).$ 

The meromorphic $1$-form $d\log f$ defines a global holomorphic algebroid $1$-form, namely a section of $\Omega^{1}_{X}(\log D).$ Because it is closed, it defines a Lie algebroid morphism 
\[
\frac{1}{c} d \log f: T_{X}(-\log D) \to \mathbb{C},
\]
which integrates by Lie's second theorem to a groupoid morphism $\pi: \Pi(X,D) \to \mathbb{C}.$
The derivative of the composite $\pi \circ i$ is the Lie algebroid morphism: 
\[
\mathbb{C} \times X \to \mathbb{C}, \qquad (\lambda, x) \mapsto \frac{\lambda}{c} d \log f(E_{s}) = \lambda. 
\]
Hence $\pi \circ i : \mathbb{C} \ltimes X \to \mathbb{C}$ is the canonical projection map.

Let $U = X \setminus D$, which is a connected open dense subset of $X$. The anchor map of $T_{X}(-\log D)$ is an isomorphism over $U$, and hence $(\Pi(X,D), i, \pi)$ is central by Lemma \ref{centralitycrit}. Furthermore, $U$ is an orbit of $\Pi(X,D)$ which is dense, implying that there is a unique s-equivalence class. Therefore, $(\Pi(X,D), i, \pi)$ is a homogeneous groupoid. 
\end{proof}
\begin{remark}
The most important examples of weighted homogeneous free divisors arise when $X$ is a vector space equipped with a linear action of $\mathbb{C}^*$ which has only positive weights. In this case, the homogeneous groupoid $(\Pi(X,D), i, \pi)$ constructed in Theorem \ref{whgconst} is positive. 
\end{remark}

\subsubsection{Plane curve singularities} \label{planecurvesingularities}
Let $D \subset \mathbb{C}^2$ be a plane curve defined by a reduced holomorphic function $f$.  By \cite[Corollary 1.7]{saito1980theory} $D$ is a free divisor. Choose relatively prime positive integers $p$ and $q$, and consider the weighted Euler vector field 
\[
E = p x \partial_{x} + q y \partial_{y}. 
\]
We assume that $f$ is weighted homogeneous of degree $n$, meaning that $E(f) = nf$. This implies that $f$ is a linear combination of monomials of the form $x^{k}y^{r}$, where $kp + rq = n$. Since $E \in T_{\mathbb{C}^2}(-\log D)$, it follows that the only singularity of $D$ is at the origin. Define the following vector field
\[
V = -(\partial_{y}f) \partial_{x} + (\partial_{x} f) \partial_{y}. 
\]
Then, since $V(f) = 0$, it follows that $V \in T_{\mathbb{C}^2}(-\log D)$. Taking the wedge product with $E$, we get $E \wedge V = E(f) \partial_{x} \wedge \partial_{y} = nf \partial_{x} \wedge \partial_{y}.$ Hence by Theorem \ref{SaitoCrit}, $E$ and $V$ form a basis of $T_{\mathbb{C}^2}(-\log D)$. Computing the Lie bracket, we obtain 
\[
[E,V] = (n-q-p)V.
\] Since $f$ is reduced, and assuming that $D$ is singular, we find by looking at the possible monomials in $f$ that $n - q - p \geq 0$. The case $n = p + q$ corresponds to $D$ being a normal crossing divisor, which will be considered in Section \ref{reductivefreedivisors}. We assume that $n - q - p >  0$, in which case $E$ and $V$ generate the unique $2$-dimensional non-abelian Lie algebra $\mathfrak{g}_{A}$. The logarithmic tangent bundle is then given by an action algebroid $T_{\mathbb{C}^2}(-\log D) \cong \mathfrak{g}_{A} \ltimes \mathbb{C}^2$. However, the integrating Lie groupoid $\Pi(\mathbb{C}^2, D)$ may fail to be an action groupoid, since the vector field $V$ may fail to be complete. The vector field $E$, on the other hand, is complete, and it generates an action of $\mathbb{C}^{*}$ on $\mathbb{C}^2$:
\[
\mu \ast (x,y) = (\mu^p x, \mu^q y). 
\] 
By Theorem \ref{whgconst}, $\Pi(X,D)$ defines a positive homogeneous groupoid. The derivative of the map $\pi : \Pi(X,D) \to \mathbb{C}$ can be described by the following composition of algebroid morphisms
\[
T_{\mathbb{C}^2}(-\log D) \cong \mathfrak{g}_{A} \ltimes \mathbb{C}^2 \to \mathfrak{g}_{A} \to \mathbb{C},
\]
where the first map is the projection, and the second map sends $E$ to $1$ and $V$ to $0$. 

\begin{example}[Cusp singularity] \label{cuspsinggroup}
Let $E = 3x \partial_{x} + 2y \partial_{y}$, and $f = x^2 - y^3$. Then $D$ is a curve with a cusp singularity at the origin and $X \setminus D$ is homotopic to the complement of the trefoil knot in $S^3$. In this case we find that 
\[
E(f) = 6f, \qquad V = 3y^2 \partial_{x} + 2x \partial_{y}, \qquad [E, V] = V. 
\]
Consider the map $f : \mathbb{C}^2 \to \mathbb{C}$ as a fibration, which has $D$ as its central fibre. It is a submersion away from the origin, and hence all other fibres are smooth, once-punctured, elliptic curves. Furthermore, $f$ is $\mathbb{C}^*$-equivariant with respect to the weight $6$ action of $\mathbb{C}^*$ on $\mathbb{C}$. This implies that all non-zero fibres are isomorphic, and that there is an action of the group of $6^{th}$-roots of unity on each fibre $f^{-1}(z)$ in a way that preserves the puncture. This uniquely determines the curve $f^{-1}(z)$ and it's puncture up to isomorphism. Namely it is isomorphic to $\mathbb{C}/(\mathbb{Z} + \eta \mathbb{Z})$, with the puncture at origin, where $\eta = \exp(\frac{\pi i}{3})$ is a primitive $6^{th}$ root of unity. 

The vector field $V$ is non-vanishing away from the origin in $\mathbb{C}^2$, and it points along the fibres of $f$. In other words, it restricts to a non-vanishing vector field on each punctured elliptic curve. Therefore it is not complete and hence $\Pi(X,D)$ is not an action groupoid. 

To obtain an action groupoid, consider the fibrewise compactification $F: \overline{X} \to \mathbb{C}$. The generic fibres are smooth compact elliptic curves, and the fibre above $0$, denoted $\overline{D}$, is a singular rational curve. The fibre $\overline{D}$ is a free divisor in $\overline{X}$, and the vector fields $E$ and $V$ extend to $\overline{X}$ to define a basis of $T_{\overline{X}}(-\log \overline{D})$. These vector fields are now both complete and hence the twisted fundamental groupoid is an action groupoid 
\[
\Pi(\overline{X}, \overline{D}) \cong (\mathbb{C} \ltimes \mathbb{C}) \ltimes \overline{X}. 
\]
The restriction $\Pi(\overline{X}, \overline{D})|_{X}$ is a groupoid integrating $T_{X}(-\log D)$ but it is not source simply connected, since $\pi_{1}(\overline{X} \setminus \overline{D}) \not \cong \pi_{1}(X \setminus D)$. We may obtain $\Pi(X,D)$ by taking the universal source simply connected cover of $\Pi(\overline{X}, \overline{D})|_{X}$. 
\end{example}

\subsubsection{Linear and reductive free divisors} \label{reductivefreedivisors}
In this section we recall the notions of linear and reductive free divisors which were introduced by \cite{MR2228227} and further studied in \cite{MR2521436, MR2795728, MR3237442, Linearfreeunpub}. These produce examples of positive homogeneous groupoids which are also of the form considered in Section \ref{algebraicgroupex}. 

Let $D \subset V$ be a free divisor in an $n$-dimensional complex vector space. Then $D$ is a \emph{linear} free divisor if $T_{V}(-\log D)$ admits a basis of linear vector fields. In this case, $D$ is defined by the vanishing of a homogeneous degree $n$ polynomial $f$. Let $\mathfrak{g} \subset H^{0}(V, T_{V}(-\log D))$ denote the subspace of linear logarithmic vector fields. Then $\mathfrak{g}$ is a complex $n$-dimensional Lie subalgebra of $\mathfrak{gl}(V^{*})$ and the logarithmic tangent bundle is canonically isomorphic to the corresponding action Lie algebroid 
\[
T_{V}(-\log D) \cong \mathfrak{g} \ltimes V. 
\]
Let $G' \subset GL(V)$ be the group of linear transformations which preserve $D$ and let $G$ be the connected component of the identity. Then $G$ is an algebraic subgroup of $GL(V)$ whose Lie algebra is isomorphic to $\mathfrak{g}$ \cite{MR2521436}. The divisor $D$ is \emph{reductive} if $G$ is a reductive group. 

The action groupoid $G \ltimes V$ provides an integration of $T_{V}(-\log D)$, and the twisted fundamental groupoid is given by $\Pi(V, D) = \tilde{G} \ltimes V,$ where $\tilde{G}$ is the universal cover of $G$. This is a groupoid of the type considered in Section \ref{algebraicgroupex}, and hence it gives a homogeneous groupoid. Furthermore, since $D$ is preserved by scalar multiplication, the homogeneous groupoid is positive. 

Assuming now that $G$ is reductive, its Lie algebra decomposes canonically as 
\[
\mathfrak{g} = \mathfrak{z} \oplus \mathfrak{s},
\]
where $\mathfrak{z}$ is the centre, and $\mathfrak{s}$ is the derived subalgebra, which is semisimple. Hence $\tilde{G} = \mathfrak{z} \times S$, where $S$ is a simply-connected semisimple algebraic group and $\mathfrak{z}$ is an $r$-dimensional complex vector space. The vector space $V$ is a representation of the reductive group $G$. Therefore it decomposes into irreducible subrepresentations 
\[
V = V_{1} \oplus ... \oplus V_{l}. 
\]
In \cite{MR2795728} it is shown that the irreducibles in this decomposition are pairwise non-isomorphic, making the decomposition unique. In addition, the numbers $l$ and $r$ both coincide with the number of irreducible components of $D$. Finally, the centre of $G$ is given by $(\mathbb{C}^{*})^r$, with each factor acting by scalar multiplication on the corresponding subrepresentation $V_{i}$. Therefore, fixing an ordering of the $V_{i}$, the Lie algebra of the centre, $\mathfrak{z}$, is canonically isomorphic to $\mathbb{C}^r$, and the kernel of the exponential map $\exp: \mathfrak{z} \to (\mathbb{C}^{*})^r$ is a lattice, which is canonically isomorphic to $\mathbb{Z}^r$. This lattice coincides with the elements of $\mathfrak{z} \subseteq \tilde{G}$ that act trivially on $V$.  

There is an abundance of examples of linear free divisors, and the work of \cite{MR2228227, MR2521436, MR430336} provides several constructions. In particular, \cite{MR2228227} explains how to construct reductive free divisors as discriminants in the representation spaces of quivers. We will not review these constructions in this paper, but rather we will present some of the resulting examples, using the quivers to label the divisors. 

\begin{example}[$A_{n+1}$] \label{Aquiver}
Let $X = \mathbb{C}^{n}$ and let $D$ be a simple normal crossing divisor defined by the vanishing of the product of the coordinate functions $x_{1}x_{2}...x_{n}$. The reductive group of symmetries is $G = (\mathbb{C}^*)^{n}$, and the twisted fundamental groupoid is given by $\Pi(X,D) = \mathbb{C}^{n} \ltimes \mathbb{C}^n$. These divisors are associated to the $A_{n+1}$ quivers. 
\end{example}

\begin{example}[$D_{4}$] \label{D4quiver}
Consider the reductive group $G = (\mathbb{C}^*)^{3} \times SL(2, \mathbb{C})$ acting linearly on $X = (\mathbb{C}^{2})^{\oplus 3}$ in the following way: 
\[
(a, b, c, M) \ast (u, v, w) = (aM(u), bM(v), cM(w)),
\]
where $a, b, c \in \mathbb{C}^*$, $M \in SL(2, \mathbb{C})$, and $u, v, w \in \mathbb{C}^2$. This action has an open dense orbit $U$ whose complement $D = X \setminus U$ is a reductive linear free divisor defined by the vanishing of the following polynomial: 
\[
f(u,v,w) = (u_{1}v_{2} - u_{2}v_{1})(v_{1}w_{2} - v_{2}w_{1})(w_{1}u_{2} - w_{2}u_{1}). 
\]
The twisted fundamental groupoid is given by 
\[
\Pi(X,D) =  (\mathbb{C}^{3} \times SL(2, \mathbb{C})) \ltimes (\mathbb{C}^{2})^{\oplus 3}. 
\]
This divisor is associated to the $D_{4}$ quiver. 
\end{example}

\begin{example}[$G_{2}$] \label{G2example}
Consider the group $GL(2, \mathbb{C})$ and its canonical representation on $\mathbb{C}^2$. By taking the symmetric product, we obtain a representation on $X = S^3(\mathbb{C}^2) \cong \mathbb{C}^4$. This representation has an open dense orbit $U$ whose complement $D = X \setminus U$ is a reductive linear free divisor. 

Let $e_{1}$ and $e_{2}$ be the standard basis vectors of $\mathbb{C}^2$, and let $(x,y,z,w)$ be coordinates on $X$ which denote the points
\[
xe_{1}^{3} + ye_{1}^2e_{2} + ze_{1}e_{2}^2 + we_{2}^3. 
\]
In these coordinates, the divisor $D$ is defined by the vanishing of the following polynomial 
\[
f(x,y,z,w) = 27w^2x^2 - 18 wxyz + 4wy^3 + 4xz^3 - y^2z^2.
\]
If we identify $X$ with the space of homogeneous degree $3$ polynomials in two variables, then $D$ corresponds to the discriminant, where the polynomials have repeated roots. 

The twisted fundamental groupoid is given by 
\[
\Pi(X,D) = (\mathbb{C} \times SL(2,\mathbb{C})) \ltimes S^3(\mathbb{C}^2),
\]
where we use the fact that $\mathbb{C} \times SL(2,\mathbb{C})$ is the universal cover of $GL(2, \mathbb{C})$. 

We can think of this divisor as being associated to the $G_{2}$ quiver. This quiver is a quotient of the $D_{4}$ quiver, and there is a corresponding morphism between twisted fundamental groupoids: 
\begin{align*}
(\mathbb{C}^{3} \times SL(2, \mathbb{C})) \ltimes (\mathbb{C}^{2})^{\oplus 3} &\to (\mathbb{C} \times SL(2,\mathbb{C})) \ltimes S^3(\mathbb{C}^2) \\
(x,y,z,M, u, v, w) & \mapsto (\frac{1}{3}(x + y + z), M, uvw). 
\end{align*}
\end{example}

\begin{example} \label{symmetricborellinearfree}
The following family of examples, described in detail in \cite{MR2521436}, are linear but non-reductive free divisors. Let $B_{n}$ be the group of upper triangular invertible $n \times n$ matrices, and let $S_{n}$ be the vector space of symmetric $n \times n$ matrices. There is a right linear action of $B_{n}$ on $S_{n}$ given by 
\[
S \ast B = B^{T}SB,
\]
where $B \in B_{n}, S \in S_{n}$ and $B^{T}$ denotes the transpose of $B$. This action has an open dense orbit $U$, whose complement $D_{n} = S_{n} \setminus U$ is a linear free divisor. Let $det_{j} : S_{n} \to \mathbb{C}$ denote the upper left $j \times j$ minor. Then $D_{n}$ is defined by the vanishing of the following polynomial 
\[
f_{n} = \prod_{j = 1}^n det_{j},
\]
and the twisted fundamental groupoid is given by $\Pi(S_{n}, D_{n}) = S_{n} \rtimes \tilde{B}_{n}$, where $\tilde{B}_{n}$ is the universal cover of $B_{n}$. 
When $n = 2$, we have $S_{2} \cong \mathbb{C}^3$, and $D_{2}$ given by the vanishing of 
\[
f_{2} = x (y^2 - xz), 
\]
which is the union of a quadric cone and one of its tangent planes. 
\end{example}

\subsubsection{Sekiguchi's 17 free divisors} \label{SekiDiv}
In \cite{MR2588504} Sekiguchi describes and classifies a family of $17$ weighted homogeneous free divisors in $\mathbb{C}^3$. They are divided into $3$ subfamilies, $A$, $B$, and $H$, according to the weights of the $\mathbb{C}^*$-action with respect to which they are weighted homogeneous. The weights of the coordinates $(x,y,z)$ for each subfamily are respectively $(2, 3, 4)$, $(1,2,3)$ and $(1, 3, 5)$. We refer to \cite{MR2588504} for the full family of divisors. In this section we give one example from subfamily $B$. It is a divisor $D$ defined by the following polynomial 
\[
F_{B,5} = xy^4 + y^3 z + z^3,
\]
which is weighted homogeneous of degree $9$ with respect to the Euler vector field 
\[
E = x \partial_{x} + 2y \partial_{y} + 3z \partial_{z}.
\]
The logarithmic tangent bundle $T_{\mathbb{C}^3}(-\log D)$ has a basis given by the vector fields $E$ and 
\begin{align*}
V &= 2y \partial_{x} + (-24 xy + 2z) \partial_{y} + (-2y^2 - 32 xz) \partial_{z} \\ 
W &= 3z \partial_{x} - 9y^2 \partial_{y} - 12yz \partial_{z}. 
\end{align*}
These vector fields satisfy $[E, V] = V$, $[E, W] = 2W$ and $[V,W] = 24zE + 6yV - 40xW$. Furthermore, they satisfy 
\[
E(F_{B,5}) = 9F_{B,5}, \ \ V(F_{B,5}) = -96xF_{B,5}, \ \ W(F_{B,5}) = -36yF_{B,5}.
\]
The isotropy Lie algebra at the origin is a $3$-dimensional Lie algebra spanned by $E_{0}, V_{0}, W_{0}$, with brackets $[E_{0}, V_{0}] = V_{0}$, $[E_{0}, W_{0}] = 2W_{0}$ and $[V_{0},W_{0}] = 0$. However, unlike the previous Lie algebroids we have been considering, $T_{\mathbb{C}^3}(-\log D)$ is not an action algebroid. This can be seen as a consequence of our classification of representations, and is given in Corollary \ref{notaction}.

\section{Representation theory of homogeneous groupoids} \label{ReptheoryWHG}
In this section, we develop the main structure theorems for the representation theory of homogeneous Lie groupoids. 

\subsection{Morita equivalence}\label{MEsection}
To start, we briefly recall the notion of Morita equivalence, which is a powerful tool in the study of groupoid representations. A \emph{Morita equivalence} between a Lie groupoid $\mathcal{G}$ over $X$ and a Lie groupoid $\mathcal{H}$ over $Y$ is a bi-principal $(\mathcal{G}, \mathcal{H})$ bi-bundle. Morita equivalence defines an equivalence relation between Lie groupoids that is much weaker than isomorphism. This makes it particularly useful because \emph{Morita equivalent Lie groupoids have equivalent categories of representations}. As an application, we have the following useful result, which allows us to go between local and global descriptions of Lie algebroid representations. 
\begin{lemma} \label{restrictionprop}
Let $V$ be a representation of $\mathbb{C}^*$ with only positive weights, and let $\mathbb{C} \ltimes V$ be the action groupoid obtained by letting $\mathbb{C}$ act on $V$ via exponentiation. Let $\mathcal{G}$ be a source simply connected Lie groupoid over $V$ which is equipped with a morphism $i: \mathbb{C} \ltimes V \to \mathcal{G}$ (in particular, $\mathcal{G}$ could be a positive homogeneous groupoid), and let $A$ be the Lie algebroid of $\mathcal{G}$. Finally, let $B \subset V$ be a neighbourhood of the origin which is closed under the action of elements $z \in \mathbb{C}^*$ with $|z| \leq 1$. Then by restricting representations of $A$ to $B$, we obtain the following equivalence of categories 
\[
Rep(A, H) \cong Rep(A|_{B}, H). 
\]
\end{lemma}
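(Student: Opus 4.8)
The plan is to exhibit a Morita equivalence between $\mathcal{G}$ and its restriction $\mathcal{G}|_{B}$, which by the general fact recalled above (Morita equivalent groupoids have equivalent representation categories) will descend to the stated equivalence after passing to Lie algebroids. The key observation is that $B$ meets every orbit of $\mathcal{G}$: since $i$ maps $\mathbb{C}\ltimes V$ into $\mathcal{G}$, every orbit of the $\mathbb{C}^{*}$-action on $V$ is contained in a single $\mathcal{G}$-orbit, and because the weights of $V$ are positive, the flow of $z\ast v$ as $|z|\to 0$ carries any $v\in V$ into $B$ (here I use that $B$ is closed under the action of $z$ with $|z|\le 1$, and is a neighbourhood of the origin). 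Hence $B$ is a \emph{saturated-up-to-Morita} slice: the inclusion $B\hookrightarrow V$ is transverse to the orbit foliation (trivially, as it is an open embedding) and meets all orbits, which is exactly the hypothesis needed for the pullback groupoid $\mathcal{G}|_{B} = \{g\in\mathcal{G}: s(g),t(g)\in B\}$ to be Morita equivalent to $\mathcal{G}$ via the bibundle $\mathcal{G}_{B} := s^{-1}(B)$, with the obvious left $\mathcal{G}$-action and right $\mathcal{G}|_{B}$-action.

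Concretely, I would proceed as follows. First, check that $\mathcal{G}|_{B}$ is a Lie groupoid: this requires $s,t:\mathcal{G}\rightrightarrows V$ to be transverse to $B\hookrightarrow V$, which holds because $B$ is open in $V$, so $\mathcal{G}|_{B}$ is simply the open subgroupoid $s^{-1}(B)\cap t^{-1}(B)$. Second, verify that $\mathcal{G}_{B}=s^{-1}(B)$, equipped with the anchor $t:\mathcal{G}_{B}\to V$ on the left and $s:\mathcal{G}_{B}\to B$ on the right, is a $(\mathcal{G},\mathcal{G}|_{B})$-bibundle. The right action (restricted to $s^{-1}(B)$) is free and proper and its moment map is a surjective submersion onto $B$ — here surjectivity uses that $B$ is a neighbourhood of the origin. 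The left moment map $t:s^{-1}(B)\to V$ is a surjective submersion precisely because $B$ meets every $\mathcal{G}$-orbit, which is the positivity argument above; and the left action is principal over $B$. Third, conclude $Rep(\mathcal{G},H)\cong Rep(\mathcal{G}|_{B},H)$. Finally, transfer to algebroids: since $\mathcal{G}$ is source simply connected, $Rep(\mathcal{G},H)\cong Rep(A,H)$ by Lie's second theorem (as recalled in Section \ref{Firstsection}); similarly the restriction $\mathcal{G}|_{B}$ is source simply connected (its source fibres over $b\in B$ are open subsets of those of $\mathcal{G}$, hence — one must check — still simply connected, using that $B$ is contractible along the $\mathbb{C}^{*}$-flow so that nearby source fibres deformation retract appropriately), so $Rep(\mathcal{G}|_{B},H)\cong Rep(A|_{B},H)$, and composing the three equivalences gives the claim.

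The main obstacle I anticipate is establishing that the left moment map $t:s^{-1}(B)\to V$ is \emph{surjective} and, more delicately, that the left $\mathcal{G}$-action on $s^{-1}(B)$ is principal — i.e.\ that $\mathcal{G}$ acts transitively on the fibres of $s:s^{-1}(B)\to B$ over a given $\mathcal{G}$-orbit, with the quotient being exactly $\mathcal{G}|_{B}$. Surjectivity of $t$ is where the positive-weights hypothesis does real work: given $v\in V$, I need an arrow in $\mathcal{G}$ from some point of $B$ to $v$, and this is supplied by $i(z,w)$ for a suitable $z\in\mathbb{C}$ with $\exp(z)\ast w = v$ and $w = \exp(z)^{-1}\ast v\in B$ (possible since $|\exp(z)|$ can be made $\le 1$ while $\exp(z)\ast v$ ranges over the orbit of $v$, and the positivity forces such points into any neighbourhood of $0$). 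The principality then follows from the standard fact that for the pullback of a groupoid along a map transverse to $s$ and $t$ and meeting all orbits, $s^{-1}(B)$ is automatically a Morita bibundle; I would cite this (e.g.\ the pullback/weak-equivalence construction for Lie groupoids) rather than reprove it. A secondary technical point, the source-simple-connectedness of $\mathcal{G}|_{B}$, I would handle by the same contraction argument: the $\mathbb{C}^{*}$-flow (extended to $|z|\le 1$) retracts $B$ onto $0$ through $B$, inducing a retraction of each source fibre of $\mathcal{G}|_{B}$ compatible with that of $\mathcal{G}$, whence $\pi_{1}$ is unchanged.
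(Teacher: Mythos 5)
Your proposal is correct and follows essentially the same route as the paper: positivity of the weights shows $B$ meets every $\mathcal{G}$-orbit, so $\mathcal{G}$ and $\mathcal{G}|_{B}$ are Morita equivalent (the paper cites a known result where you spell out the bibundle $s^{-1}(B)$), and then Lie's second theorem is applied on both sides after checking that $\mathcal{G}|_{B}$ is source simply connected by deforming paths and homotopies into $\mathcal{G}|_{B}$ using the $\mathbb{C}^{*}$-flow.
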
 
\begin{proof}
Because the action of $\mathbb{C}^*$ on $V$ has only positive weights, the neighbourhood $B$ intersects every orbit of $\mathcal{G}$. By \cite[Proposition 4.5]{bischoff2020lie} this implies that $\mathcal{G}$ and $\mathcal{G}|_{B}$ are Morita equivalent, and hence they have equivalent categories of representations. Since $\mathcal{G}$ is source simply connected, its category of representations is equivalent to $Rep(A, H)$ by Lie's second theorem. The Lie algebroid of $\mathcal{G}|_{B}$ is $A|_{B}$. Hence, the proof follows if we can show that $\mathcal{G}|_{B}$ is source simply connected, since we can then apply Lie's second theorem to conclude that $Rep(\mathcal{G}|_{B}, H) \cong Rep(A|_{B}, H).$ To see that the source fibres of $\mathcal{G}|_{B}$ are simply connected, observe that the $\mathbb{C}^*$-action allows us to deform paths and homotopies from $\mathcal{G}$ so that they lie entirely in $\mathcal{G}|_{B}$. 
\end{proof}

As an upshot of Lemma \ref{restrictionprop}, we note the following corollary, which makes precise the fact that we can often study connections by working purely locally with germs. Let $\mathcal{G}$ be a groupoid satisfying the properties described in Lemma \ref{restrictionprop}, and let $A$ be its Lie algebroid. We may define a category of germs of $H$-representations $Rep_{0}(A, H)$. The objects of this category consist of triples $(B, P, \nabla)$, where $0 \in B \subseteq V$ is an open neighbourhood of the origin, $P \to B$ is a principal $H$-bundle, and $\nabla: A|_{B} \to At(P)$ is a representation. The set of morphisms between objects $(B, P, \nabla)$ and $(B', P', \nabla')$ consist of equivalence classes of tuples $(C, T)$, where $0 \in C \subseteq B \cap B'$ is an open neighbourhood, and $T: \nabla|_{C} \to \nabla'|_{C}$ is a morphism. Two tuples $(C_{1}, T_{1})$ and $(C_{2}, T_{2})$ are equivalent if there is an open subset $0 \in C_{12} \subseteq C_{1} \cap C_{2}$ such that $T_{1}|_{C_{12}} = T_{2}|_{C_{12}}$. 

\begin{corollary}
Let $\mathcal{G}$ be a groupoid satisfying the properties described in Lemma \ref{restrictionprop}. Then there is an equivalence of categories 
\[
R: Rep(A, H) \to Rep_{0}(A, H).
\]
It is given by sending a representation $(P, \nabla)$ to $(V, P, \nabla)$, and by sending a morphism $T: \nabla \to \nabla'$ to the equivalence class of $(V, T)$. 
\end{corollary}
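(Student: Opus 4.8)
The plan is to check that $R$ is a well-defined functor and then that it is fully faithful and essentially surjective; everything will be a formal consequence of the equivalence of Lemma~\ref{restrictionprop}. Well-definedness and functoriality I would dispatch first, and quickly: a morphism $T\colon\nabla\to\nabla'$ of representations over $V$ has a well-defined germ $[(V,T)]$, and since composition of germs is computed by passing to a common domain --- which for objects of the form $(V,-,-)$ is just $V$ --- one checks $R(S\circ T)=R(S)\circ R(T)$ and $R(\mathrm{id})=\mathrm{id}$ on the nose. The single geometric ingredient I will use repeatedly is that positivity of the weights lets one replace an arbitrary neighbourhood of the origin by one to which Lemma~\ref{restrictionprop} applies: in linear coordinates diagonalising the $\mathbb{C}^*$-action, with positive weights $w_i$, so that $z\ast(v_1,\dots,v_n)=(z^{w_1}v_1,\dots,z^{w_n}v_n)$, any polydisc centred at $0$ satisfies $|z|\le 1\Rightarrow|z^{w_i}v_i|\le|v_i|$ and is therefore closed under the action of $\{z\in\mathbb{C}^*:|z|\le 1\}$. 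Hence every neighbourhood of $0\in V$ contains such a polydisc $B'$, and for each such $B'$ the restriction functor $Rep(A,H)\to Rep(A|_{B'},H)$ is an equivalence of categories.

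For essential surjectivity I would take an object $(B,P,\nabla)$ of $Rep_0(A,H)$, choose a polydisc $0\in B'\subseteq B$ as above, observe that $(B,P,\nabla)\cong(B',P|_{B'},\nabla|_{B'})$ in $Rep_0(A,H)$ (via the identity restriction), and then use Lemma~\ref{restrictionprop} to lift $(P|_{B'},\nabla|_{B'})$ to a representation $(\widetilde P,\widetilde\nabla)$ over $V$; by construction $R(\widetilde P,\widetilde\nabla)=(V,\widetilde P,\widetilde\nabla)$ is isomorphic to $(B,P,\nabla)$. For fullness, given a germ of morphism between $(V,P,\nabla)$ and $(V,P',\nabla')$ represented by $(C,T)$ with $0\in C\subseteq V$, I would shrink to a polydisc $B'\subseteq C$, invoke the bijection $\mathrm{Hom}_{Rep(A,H)}(\nabla,\nabla')\xrightarrow{\ \sim\ }\mathrm{Hom}_{Rep(A|_{B'},H)}(\nabla|_{B'},\nabla'|_{B'})$ coming from Lemma~\ref{restrictionprop} to produce the unique $\widetilde T\colon\nabla\to\nabla'$ over $V$ restricting to $T|_{B'}$, and note that $R(\widetilde T)$ and $[(C,T)]$ agree on $B'$, hence define the same germ. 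Faithfulness is the same bijection read backwards (or, conceptually, the identity theorem: a morphism of representations over $V$ is a holomorphic section of a bundle over the connected manifold $V$, so it is determined by its germ at the origin).

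I do not expect a serious obstacle here; the only point that needs care is the reduction step, present in each part of the argument, from an arbitrary neighbourhood of the origin to a contraction-stable polydisc on which Lemma~\ref{restrictionprop} is available --- this is exactly where positivity of the weights enters --- together with the routine bookkeeping that two representatives of a morphism germ which agree on a common sub-neighbourhood (here $B'$) represent the same germ. Once these are in place the corollary is a direct translation of Lemma~\ref{restrictionprop}.
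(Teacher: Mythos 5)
Your proposal is correct and is exactly the argument the paper intends: the corollary is stated as an immediate upshot of Lemma \ref{restrictionprop} with no written proof, and your verification (germ bookkeeping for functoriality, plus essential surjectivity, fullness and faithfulness obtained by shrinking any neighbourhood of the origin to a contraction-stable polydisc and invoking the restriction equivalence there) is precisely the routine translation the paper leaves to the reader. The one point you rightly make explicit, and which is the only genuine content, is that positivity of the weights guarantees every neighbourhood of $0$ contains a polydisc stable under the action of $\{z:|z|\leq 1\}$, so Lemma \ref{restrictionprop} is available at arbitrarily small scales.
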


\subsection{Jordan-Chevalley decomposition}\label{JCdecompsection}
In this section, we establish a Jordan-Chevalley decomposition theorem for representations of homogeneous groupoids. When this is applied to the groupoid $\mathbb{C} \ltimes \mathbb{C}$ (i.e. the groupoid associated to the $A_{2}$ quiver in Example \ref{Aquiver}), this result recovers the well-established Jordan decomposition theorem for differential equations with Fuchsian singularities due to Hukuhara, Turrittin and Levelt \cite{MR5229, turrittin1955convergent, levelt1975jordan} (this result extends to formal differential equations with irregular singularities; see also \cite{wasow2018asymptotic, MalgrangeReduction, MR610528, babbitt1983formal, MR4186767}). This result was reinterpreted from the perspective of Lie groupoids in Theorem 3.9 of \cite{bischoff2020lie}. It is this later version of the result which we will generalize in the present section. Throughout this section, we assume that $(\mathcal{G}, i, \pi)$ is a homogeneous groupoid over $X$. 

We begin by recalling the multiplicative Jordan-Chevalley decomposition for elements of a complex reductive group $H$. An element $g \in H$ is defined to be \emph{semisimple} or \emph{unipotent} if its image under a faithful representation of $H$ is, respectively, semisimple (i.e. diagonalizable) or unipotent (i.e. $(g-id)^n = 0$  for some $n \in \mathbb{N}$). An arbitrary element $g \in H$ admits a unique Jordan-Chevalley decomposition $g = su$, where $s$ is semisimple, $u$ is unipotent, and $s$ and $u$ commute. This decomposition is preserved by morphisms of reductive groups, and it is compatible via the exponential map with the additive Jordan-Chevalley decomposition for elements of the Lie algebra of $H$. 

\subsubsection*{Monodromy automorphism}
Let $P \to X$ be a principal $H$-bundle equipped with a representation of $\mathcal{G}$
\[
\phi: \mathcal{G} \to \mathcal{G}(P). 
\]
By pulling back $\phi$ via the map $i \circ j : \mathbb{Z}^k \times X \to \mathcal{G}$, we define the \emph{monodromy automorphism} 
\[
M_{n}(x) = (\phi \circ i \circ j) (n, x) \in Aut_{H}(P_{x}),
\]
for $n \in \mathbb{Z}^k$. Because $\mathbb{Z}^k \times X$ is central, we have 
\[
M_{n}(t(g)) \circ \phi(g) = \phi(g) \circ M_{n}(s(g)),
\]
for all $g \in \mathcal{G}$, and so $M_{n}$ actually defines an automorphism of $\phi$. We therefore obtain a homomorphism 
\[
M : \mathbb{Z}^k \to Aut(\phi). 
\]
Given a (local) trivialization of $P$, we may view $M_{n}(x) \in H$. Because changing trivializations has the effect of conjugating $M_{n}(x)$, it makes sense to say that $M_{n}(x)$ lies in a fixed conjugacy class of $H$. The same is true for the semisimple and unipotent components of $M_{n}(x)$, which are defined by the Jordan-Chevalley decomposition.

\begin{lemma} \label{JCdeclemma}
Let $S_{n}$ and $U_{n}$ denote the semisimple and unipotent components of the monodromy $M_{n}$ respectively, defined using the multiplicative Jordan-Chevalley decomposition. Then $S_{n}$ and $U_{n}$ define holomorphic automorphisms of $\phi$. Furthermore, the conjugacy class of $S_{n}$ is constant over $X$.  
\end{lemma}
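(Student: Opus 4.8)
The plan is to reduce the statement to a statement about holomorphic families of elements of $H$ and then invoke the standard properties of the multiplicative Jordan-Chevalley decomposition together with the centrality of $\mathbb{Z}^k \times X$. First I would work locally: fix a contractible open set $U \subseteq X$ over which $P$ is trivial, so that $M_n$ becomes a holomorphic map $M_n : U \to H$. The key input is that the Jordan-Chevalley decomposition $M_n(x) = S_n(x) U_n(x)$, although defined pointwise, actually varies holomorphically in $x$. To see this, note that $M$ is a homomorphism from $\mathbb{Z}^k$ to $\Aut(\phi)$, so the $M_n(x)$ for varying $n$ all commute and generate an abelian subgroup; more importantly, since $x \mapsto M_n(x)$ is holomorphic and the semisimple part of an element of a reductive group can be extracted by a fixed algebraic recipe on the closure of the cyclic group it generates (e.g. $S_n$ is the unique semisimple element in the Zariski closure of $\langle M_n(x)\rangle$ lying in the same coset of its unipotent part), one gets that $S_n(x)$ and $U_n(x) = S_n(x)^{-1} M_n(x)$ depend holomorphically on $x$. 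Concretely, I would use that in a faithful representation $H \hookrightarrow GL(W)$ the eigenvalue-projector construction, or equivalently the fact that $S_n$ is a polynomial (without constant term, with coefficients depending only on the characteristic polynomial) in $M_n$, shows holomorphic dependence; one must then check the resulting $S_n(x)$ and $U_n(x)$ actually land in $H$, which follows because the Jordan-Chevalley decomposition is intrinsic to $H$ and preserved under any faithful representation.

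Next I would verify that $S_n$ and $U_n$ are automorphisms of $\phi$, i.e. that they are $\mathcal{G}$-invariant sections of $\Aut_H(P)$. We already know $M_n$ is such a section: $M_n(t(g)) \circ \phi(g) = \phi(g) \circ M_n(s(g))$ for all $g \in \mathcal{G}$. Conjugation by $\phi(g)$ is an isomorphism $\Aut_H(P_{s(g)}) \to \Aut_H(P_{t(g)})$ of groups (locally, conjugation in $H$), and isomorphisms of reductive groups carry Jordan-Chevalley decompositions to Jordan-Chevalley decompositions. Hence $\phi(g) \circ S_n(s(g)) \circ \phi(g)^{-1}$ is the semisimple part of $\phi(g) \circ M_n(s(g)) \circ \phi(g)^{-1} = M_n(t(g))$, which by uniqueness equals $S_n(t(g))$; likewise for $U_n$. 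This gives the intertwining relations $S_n(t(g)) \circ \phi(g) = \phi(g) \circ S_n(s(g))$ and the analogue for $U_n$, so both define automorphisms of $\phi$.

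Finally, for the claim that the conjugacy class of $S_n$ is constant over $X$, I would use the homogeneous structure: here is where the hypothesis that $X$ has a unique s-equivalence class enters. The function sending $x$ to the conjugacy class of $S_n(x)$ is locally constant along orbits of $\mathcal{G}$ — indeed it is even constant along each orbit, since for $g \in \mathcal{G}$ with $s(g)=x$, $t(g)=y$ the relation above shows $S_n(y)$ and $S_n(x)$ are conjugate in $H$ via (a local representative of) $\phi(g)$. It remains to see that conjugacy classes of semisimple elements cannot jump between orbits whose closures meet; but the conjugacy class of $S_n(x)$, being determined by the characteristic polynomial of $M_n(x)$ in a faithful representation (the eigenvalues of a semisimple element and of the full element agree), depends continuously — in fact holomorphically — on $x$, while the set of possible semisimple conjugacy classes in $H$ is, after fixing a maximal torus, a discrete-to-one image of the torus modulo the Weyl group; more directly, the eigenvalues of $M_n(x)$ are locally holomorphic, hence constant on each orbit and, by taking limits, constant across s-equivalent points. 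Since all of $X$ is a single s-equivalence class, the conjugacy class of $S_n$ is globally constant. I expect this last step — propagating constancy of the conjugacy class across the s-equivalence relation rather than merely along individual orbits — to be the main obstacle, and the cleanest route is to combine the holomorphic dependence of the eigenvalues of $M_n(x)$ (so they are locally constant where they are defined as a set, by the open mapping theorem applied to each symmetric function) with the fact that orbit closures meet, forcing agreement in the limit.
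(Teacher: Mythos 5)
There is a genuine gap in your first paragraph, and it is exactly where the homogeneity hypothesis has to enter. You assert that $S_n(x)$ and $U_n(x)$ depend holomorphically on $x$ because $S_n$ is ``a polynomial in $M_n$ with coefficients depending only on the characteristic polynomial'' (or is given by a ``fixed algebraic recipe''). But at that point of your argument the characteristic polynomial of $M_n(x)$ is not yet known to be constant in $x$, and the Jordan--Chevalley decomposition is not even continuous in a holomorphic family once eigenvalue multiplicities jump: for the family $\bigl(\begin{smallmatrix} 1 & 1 \\ 0 & 1+t \end{smallmatrix}\bigr)$ the unipotent part is the identity for $t \neq 0$ but not at $t=0$. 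The paper's proof therefore establishes \emph{first} that the characteristic polynomial of $e(M_n(x))$ in a faithful representation is constant over all of $X$ --- constant along orbits because $M_n$ is an automorphism of $\phi$, hence along orbit closures by continuity, hence on $X$ because there is a unique s-equivalence class --- and only then writes $S_n = p(e(M_n))$ for a single polynomial $p$ (via the generalized eigenbundle decomposition and the Chinese remainder theorem), which is what yields holomorphicity. You do observe the constancy of the eigenvalues, but only in your last paragraph and for a different purpose; as written, the holomorphicity claim on which everything else rests is unsupported. (Your second paragraph, deducing that $S_n$ and $U_n$ are automorphisms of $\phi$ from uniqueness of the decomposition under conjugation, is correct and agrees with the paper.)

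There is a second gap in the final claim. You propagate the conjugacy class of $S_n$ across orbit closures by saying it ``is determined by the characteristic polynomial of $M_n(x)$ in a faithful representation.'' That is false for a general reductive $H$: two semisimple elements of $H$ can have equal characteristic polynomials in a faithful representation without being conjugate \emph{in $H$} (take $H$ a maximal torus of $GL(2,\mathbb{C})$ and the elements $\mathrm{diag}(a,b)$, $\mathrm{diag}(b,a)$); and your remark that semisimple classes form a ``discrete-to-one image of the torus modulo the Weyl group'' does not help, since $T/W$ is a continuous, not discrete, family. The paper's route is simpler and avoids this: the class of $S_n$ is constant along orbits by the automorphism property; the conjugacy class of a semisimple element is Zariski closed; and $S_n$ is continuous (by the holomorphicity already established), so at a point of an orbit closure $S_n$ lies in the closure of the class, which equals the class. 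Constancy then spreads through the s-equivalence relation, of which there is a single class. Replacing your eigenvalue argument by this closedness argument, and moving the constancy of the spectrum to the beginning so that the polynomial recipe is legitimate, would repair the proof.
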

\begin{proof}
Choose a faithful representation $V$ of $H$, which allows us to view $H \subseteq GL(V)$. This determines an associated vector bundle $V_{P} = (P \times V)/H$, and an embedding of the gauge groupoid
\[
e: Aut_{H}(P) \to End(V_{P}). 
\]
Because morphisms of reductive groups preserve the Jordan-Chevalley decomposition, $e(S_{n}(x))$ is the semisimple component of $e(M_{n}(x))$.

The conjugacy class of $M_{n}(x)$ (and hence of $e(M_{n}(x))$) is constant along the orbits of $\mathcal{G}$. Indeed, let $g \in \mathcal{G}$ be an element with $s(g) = y$ and $t(g) = x$. Then $M_{n}(x) = \phi(g)M_{n}(y)\phi(g)^{-1}$. Furthermore, since an arbitrary element of $\mathcal{G}$ can be written as a product of elements that are close to the identity, we may assume that $x$ and $y$ lie in the common domain of a trivialization of $P$. This allows us to view $\phi(g)$ as an element of $H$. 

As a result, the characteristic polynomial (or spectrum) of $e(M_{n}(x))$, defined as 
\[
c(\lambda, x) = det( \lambda id - e(M_{n}(x))),
\]
is constant along the orbits. By continuity $c(\lambda, x)$ is constant along the orbit closures, and hence along the s-equivalence classes. Since $X$ has a unique s-equivalence class, it follows that $c(\lambda, x)$ is constant along $X$. Hence, we have a single characteristic polynomial, and we factor it as follows: 
\[
c(\lambda) = \prod_{i} (\lambda - \mu_{i})^{d_{i}}, 
\]
where $\mu_{i}$ are the distinct eigenvalues. The rank of the operator $(e(M_{n}) - \mu_{i})^{d_{i}}$ is constant, and its kernel $V_{i}$, which is the generalized eigenspace for $\mu_{i}$, is a subbundle of $V_{P}$ of dimension $d_{i}$. We thereby obtain the generalized eigenspace decomposition 
\[
V_{P} = \bigoplus_{i} V_{i}. 
\]
Using the Chinese remainder theorem, we can find a polynomial $p(t)$ such that 
\[
p(t) = 0 \ mod \ t, \qquad p(t) = \mu_{i} \ mod \ (t - \mu_{i})^{d_{i}}.
\]
Let $T = p(e(M_{n})) \in End(V_{P}). $ Then $T|_{V_{i}} = \mu_{i}$, and hence $T$ is semisimple. (This argument is recalled from \cite{humphreys2012introduction}.) This implies that $T = e(S_{n})$, which in turn implies that $S_{n}$ varies holomorphically over $X$. Since $U_{n} = S_{n}^{-1}M_{n}$, this also implies that $U_{n}$ varies holomorphically over $X$. 

The monodromy $M_{n}$ is an automorphism of $\phi$, which means that it is invariant under the $\mathcal{G}$ action induced by $\phi$. Since the Jordan-Chevalley decomposition is preserved by morphisms of reductive groups, it follows that $S_{n}$ and $U_{n}$ are also invariant. In other words, they are automorphisms of $\phi$. 

Finally, by the same argument as above, the conjugacy class of $S_{n}$ is constant over the orbits of $\mathcal{G}$. But $S_{n}$ is semisimple and so its conjugacy class is closed. Therefore, the conjugacy class of $S_{n}$ is constant over the orbit closures, and therefore over the unique s-equivalence class. 
\end{proof}

One upshot of Lemma \ref{JCdeclemma} is that the unipotent monodromy defines a homomorphism 
\[
U : \mathbb{Z}^k \to Aut(\phi),
\]
whose image consists of unipotent automorphisms.

\subsubsection*{Groupoid 1-cocycles}
We now recall the notion of groupoid $1$-cocycles valued in a representation $(P, \phi)$. These are the structures that allow us to `deform' $\phi$ to a different representation $\psi$ defined on the same bundle $P$. More precisely, a \emph{$1$-cocycle} for a groupoid $\mathcal{G}$, valued in a representation $(P, \phi)$, is a holomorphic section $\sigma$ of the bundle of groups $t^{*}Aut_{H}(P) \to \mathcal{G}$ satisfying the following cocycle condition: 
\begin{equation} \label{cocycleeq}
\sigma(gh) = \sigma(g) \phi(g) \sigma(h) \phi(g)^{-1},
\end{equation}
for all composable pairs of arrows $(g, h) \in \mathcal{G}^2$. It is straightforward to check that, given such a cocycle $\sigma$, the composite $\psi = \sigma \circ \phi$ defines a new representation on the bundle $P$. 

We will make use of a general construction for producing $1$-cocycles out of automorphisms. 

\begin{proposition} \label{generalcocycleconst}
Suppose we have the data of a Lie groupoid $\mathcal{G}$ over $X$, a Lie group $K$, and a homomorphism $f: \mathcal{G} \to K$. Given a representation $(P, \phi)$, suppose that we have a homomorphism $c: K \to Aut(\phi)$. Then there is a $1$-cocycle $\sigma$, defined as follows: 
\[
\sigma: \mathcal{G} \to t^{*}Aut_{H}(P), \qquad g \mapsto c(f(g))|_{t(g)}. 
\]
\end{proposition}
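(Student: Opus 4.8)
The plan is to verify directly that the assignment $\sigma(g) = c(f(g))|_{t(g)}$ has the three required properties: it is a well-defined section of $t^{*}Aut_{H}(P) \to \mathcal{G}$, it is holomorphic, and it satisfies the cocycle identity $(\ref{cocycleeq})$. For well-definedness, recall that an element of $Aut(\phi)$ is by definition a $\mathcal{G}$-invariant section of the bundle of groups $Aut_{H}(P) \to X$; hence for each $g \in \mathcal{G}$ the automorphism $c(f(g)) \in Aut(\phi)$ may be evaluated at the point $t(g) \in X$ to give an element of the fibre $Aut_{H}(P)_{t(g)}$, which is precisely the fibre of $t^{*}Aut_{H}(P)$ over $g$. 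Holomorphicity of $\sigma$ is routine: it is the composite of the holomorphic target map $t$, the holomorphic morphism $f$, and the evaluation of $c$, the last being holomorphic since $c$ is a homomorphism of Lie groups in the relevant sense (equivalently, the map $\mathcal{G} \times_{X} \! X \to Aut_{H}(P)$, $g \mapsto c(f(g))|_{t(g)}$ is smooth).

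For the cocycle condition, fix a composable pair $(g,h) \in \mathcal{G}^{2}$, so that $s(g) = t(h)$ and $t(gh) = t(g)$. Since $f$ is a homomorphism and $c$ is a homomorphism, and since the group operation on $Aut(\phi) \subseteq \Gamma(Aut_{H}(P))$ is pointwise multiplication, I compute
\[
\sigma(gh) = c\big(f(g)f(h)\big)\big|_{t(g)} = \big(c(f(g))\, c(f(h))\big)\big|_{t(g)} = c(f(g))|_{t(g)} \cdot c(f(h))|_{t(g)},
\]
the product being taken in the group $Aut_{H}(P)_{t(g)}$. On the other hand, using $t(h) = s(g)$, the right-hand side of $(\ref{cocycleeq})$ reads
\[
\sigma(g)\,\phi(g)\,\sigma(h)\,\phi(g)^{-1} = c(f(g))|_{t(g)} \cdot \phi(g)\, c(f(h))|_{s(g)}\, \phi(g)^{-1}.
\]
Comparing, it remains to prove the single identity $\phi(g)\, c(f(h))|_{s(g)}\, \phi(g)^{-1} = c(f(h))|_{t(g)}$.

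But this is exactly the statement that the section $c(f(h))$ is invariant under the action of $\mathcal{G}$ on $Aut_{H}(P)$ induced by $\phi$ — the action under which an arrow $g$ sends $\tau \in Aut_{H}(P)_{s(g)}$ to $\phi(g)\tau\phi(g)^{-1} \in Aut_{H}(P)_{t(g)}$ — and $c(f(h))$ lies in $Aut(\phi)$, whose elements are by definition precisely the $\mathcal{G}$-invariant sections of $Aut_{H}(P)$. This finishes the verification. I do not expect a genuine obstacle here; the only thing requiring care is the bookkeeping of base points, and in particular the observation that the conjugation by $\phi(g)$ that appears in the cocycle identity is accounted for exactly by the $\mathcal{G}$-invariance built into the definition of $Aut(\phi)$, so that the identity reduces to the equation displayed above with no further work.
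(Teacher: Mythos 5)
Your proof is correct and follows essentially the same route as the paper: both arguments reduce the cocycle identity to the single intertwining relation $\phi(g)\,c(k)|_{s(g)}\,\phi(g)^{-1} = c(k)|_{t(g)}$, which holds because $c$ lands in $Aut(\phi)$, and then use that $f$ and $c$ are homomorphisms together with $t(h)=s(g)$, $t(gh)=t(g)$ to match the two sides. The added remarks on well-definedness and holomorphicity are fine but do not change the substance.
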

\begin{proof}
We need to verify Equation \ref{cocycleeq} for $\sigma$. Note first that the map $c$ satisfies, for all $k \in K$ and $g \in \mathcal{G}$, the following equation
\[
c(k)|_{t(g)} \phi(g) = \phi(g) c(k)|_{s(g)}. 
\]
Using the properties of $c$ and $f$, and the fact that for a composable pair $(g,h)$ we have $t(h) = s(g)$, we compute: 
\begin{align*}
\sigma(g) \phi(g) \sigma(h) \phi(g)^{-1} &= c(f(g))|_{t(g)} \phi(g) c(f(h))|_{t(h)} \phi(g)^{-1} \\
&= c(f(g))|_{t(g)} \phi(g) c(f(h))|_{s(g)} \phi(g)^{-1} \\
&= c(f(g))|_{t(g)}  c(f(h))|_{t(g)} \\
&= c(f(gh))|_{t(gh)} \\
&= \sigma(gh). 
\end{align*}
\end{proof}

\subsubsection*{Jordan-Chevalley decomposition}
Returning to the setting of an $H$-representation $(P, \phi)$ of a homogeneous groupoid $(\mathcal{G}, i, \pi)$, we observe that we are very close to the set-up of Proposition \ref{generalcocycleconst}. Indeed, the structure of a homogeneous groupoid includes the data of a groupoid morphism 
\[
\pi : \mathcal{G} \to \mathbb{C}^k,
\]
and, by Lemma \ref{JCdeclemma}, the (inverse of the) unipotent monodromy gives rise to a homormophism 
\[
U^{-1} : \mathbb{Z}^k \to Aut(\phi). 
\]
We therefore need to extend $U^{-1}$ to a map out of $\mathbb{C}^k$. We do this by using the logarithm. 

Recall that the logarithm is well-defined for the unipotent elements of a reductive group (for example, by using the Taylor series expansion in any faithful representation). The automorphism $U_{m}$ is a unipotent section of $Aut_{H}(P)$, which is a bundle of reductive groups isomorphic to $H$. Thus we may consider $\log(U_{m})$, which is a nilpotent section of $\mathfrak{aut}_{H}(P)$. To see that this varies holomorphically, we trivialize $P$ over an open neighbourhood $W \subseteq X$, and choose a faithful representation $V$ of $H$. This allows us to view $U_{m} : W \to GL(V)$. The logarithm on unipotent matrices is given by a polynomial, and so $\log(U_{m})$ is holomorphic. This perspective also lets us see that $\log(U_{n})$ and $\log(U_{m})$ commute at each point, so that their sum is nilpotent, and 
\[
\exp(\log(U_{n}) + \log(U_{m})) = U_{n+m}.
\]
Hence $\log(U_{n}) + \log(U_{m}) = \log(U_{n+m})$, and so we get a homomorphism $\log(U) : \mathbb{Z}^k \to H^{0}(X, \mathfrak{aut}_{H}(P))$. 

Recall from Section \ref{Firstsection} that an automorphism of $\phi$, such as $U_{m}$, is a gauge transformation which is invariant under the conjugation action of $\mathcal{G}$ on $Aut_{H}(P)$ induced by $\phi$. Because the exponential (and hence the logarithm) intertwines the actions of $\mathcal{G}$ on $Aut_{H}(P)$ and $\mathfrak{aut}_{H}(P)$, it follows that $\log(U_{m})$ is an infinitesimal automorphism of $\phi$. Hence we have a homomorphism 
\[
\log(U) : \mathbb{Z}^{k} \to \mathfrak{aut}(\phi). 
\]
This map is additive, and so it can be extended to a linear map $\mathbb{C}^{k} \to \mathfrak{aut}(\phi)$. In fact, this is a map of Lie algebras since the $\log(U_{m})$ are mutually commuting endomorphisms. 

This allows us to define a homomorphism 
\[
c : \mathbb{C}^k \to Aut(\phi), \qquad z \mapsto \exp(\frac{-1}{2 \pi i } \sum_{i} z_{i} \log U_{e_{i}}),
\]
where $e_{i}$ form the standard basis of $\mathbb{Z}^k$, and $z = \sum_{i} z_{i} e_{i}$. It is straightforward to check that this homomorphism extends $U^{-1}$. Applying Proposition \ref{generalcocycleconst} we get a $1$-cocycle 
\[
\sigma_{U} : \mathcal{G} \to t^*Aut_{H}(P), \qquad g \mapsto c(\pi(g))|_{t(g)} = \exp( \frac{-1}{2 \pi i } \sum_{i} \pi(g)_{i} \log U_{e_{i}}(t(g)) ).
\]
\begin{corollary}
The section $\sigma_{U}$ is a $1$-cocycle valued in the representation $(P, \phi)$. The resulting representation $\sigma_{U} \phi$ has semisimple monodromy given by $S$, the semisimple component of the monodromy of $\phi$. 
\end{corollary}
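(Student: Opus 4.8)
The first assertion should require no new work: it is an instance of Proposition \ref{generalcocycleconst}. The homogeneous structure gives the groupoid homomorphism $\pi : \mathcal{G} \to \mathbb{C}^k$, and the construction preceding the statement produces an honest homomorphism of groups $c : \mathbb{C}^k \to Aut(\phi)$, so that $\sigma_U(g) = c(\pi(g))|_{t(g)}$ is precisely the $1$-cocycle output by that proposition with $K = \mathbb{C}^k$ and $f = \pi$. In particular $\sigma_U$ takes values in $t^{*}Aut_{H}(P)$, so $\psi := \sigma_U \phi$ is a well-defined $H$-representation of $\mathcal{G}$ on the bundle $P$, and the real content of the corollary is the identification of its monodromy.

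For the monodromy I would compute directly, pulling $\psi$ back along $i \circ j : \mathbb{Z}^k \times X \to \mathcal{G}$. Fix $n \in \mathbb{Z}^k$ and $x \in X$, and put $\gamma = (i \circ j)(n,x)$, an isotropy arrow at $x$. The key point is that $\pi(\gamma) = 2\pi i\, n$: this is immediate from $\pi \circ i = p$ together with the fact that $j(n,x)$ is the arrow $(2\pi i\, n, x)$ of $\mathbb{C}^k \ltimes X$ and $p$ is projection onto the $\mathbb{C}^k$-factor. Plugging this into the formula for $c$ and using that $m \mapsto \log U_m$ is additive, the normalising constant $-\tfrac{1}{2\pi i}$ is exactly what cancels the $2\pi i$, so that $\sigma_U(\gamma)|_x = c(2\pi i\, n)|_x = \exp(-\log U_n(x)) = U_n(x)^{-1}$; equivalently, $c$ extends $U^{-1}$. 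Hence the monodromy of $\psi$ at $x$ is $\psi(\gamma) = \sigma_U(\gamma) \circ \phi(\gamma) = U_n(x)^{-1}\, M_n(x)$.

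It then remains to recognise $U_n^{-1} M_n$ as the semisimple part of $M_n$, which is exactly the multiplicative Jordan-Chevalley decomposition supplied by Lemma \ref{JCdeclemma}: writing $M_n = S_n U_n = U_n S_n$ with $S_n$ semisimple, $U_n$ unipotent, and the two commuting, one has $U_n^{-1} M_n = S_n$. Therefore the monodromy of $\psi$ is $S_n$ for every $n$, i.e. $\psi = \sigma_U \phi$ has semisimple monodromy, equal to the semisimple component $S$ of the monodromy of $\phi$. I do not anticipate a genuine obstacle here; the only points needing care are conventions --- that $\sigma_U$ really lands in $t^{*}Aut_{H}(P)$ so that the composite $\sigma_U \phi$ is a representation on the same bundle, and that the $2\pi i$ and the sign in the definition of $c$ are chosen so that $c$ extends $U^{-1}$ (rather than $U$), so that passing to $\sigma_U \phi$ kills the unipotent monodromy instead of doubling it.
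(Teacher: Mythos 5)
Your proposal is correct and follows the same route as the paper: the paper states this corollary as an immediate consequence of the preceding construction, in which $c$ is built precisely so as to extend $U^{-1}$ and Proposition \ref{generalcocycleconst} is invoked with $f=\pi$, and your computation $\sigma_U(\gamma)\phi(\gamma)=U_n^{-1}M_n=S_n$ (using $\pi(i(j(n,x)))=2\pi i\,n$ and Lemma \ref{JCdeclemma}) is exactly the verification the paper leaves implicit. No gaps; the conventions you flag (the $-\tfrac{1}{2\pi i}$ normalisation and the commuting of the $\log U_{e_i}$) are handled correctly.
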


We encode the results of the discussion in terms of an equivalence of categories. Let $\mathcal{JC}_{H}$ denote the category of tuples $(P, \phi_{s}, U)$, where $\phi_{s}$ is a representation of $\mathcal{G}$ on a principal $H$-bundle $P$ which has semisimple monodromy, and $U : \mathbb{Z}^k \to Aut(\phi_{s})$ is a homomorphism whose image consists of unipotent automorphisms. The morphisms of the category are given by principal $H$-bundle maps that intertwine both the representations and the unipotent automorphisms. 

\begin{theorem} \label{JCdecomposition}
There is an isomorphism of categories 
\[
\Rep(\mathcal{G}, H) \cong \mathcal{JC}_{H}. 
\]
\end{theorem}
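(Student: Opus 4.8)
The strategy is to construct functors in both directions and check they are mutually inverse. In one direction, given an $H$-representation $(P,\phi)$ of $\mathcal{G}$, I extract the semisimple monodromy $S$ and unipotent monodromy $U$ via Lemma \ref{JCdeclemma}, form the $1$-cocycle $\sigma_{U}$ constructed above, and set $\phi_{s} = \sigma_{U}\phi$. By the Corollary following the construction of $\sigma_{U}$, this is a representation on $P$ with semisimple monodromy $S$. Moreover $U$, being built from the (holomorphic, $\mathcal{G}$-invariant) unipotent automorphisms $U_{n}$ of $\phi$, must be checked to still be a homomorphism $\mathbb{Z}^{k}\to Aut(\phi_{s})$ valued in unipotent automorphisms: the point is that $\sigma_{U}$ is built out of powers of the $U_{n}$, so conjugating by it does not disturb commutativity of the $U_{n}$ among themselves, and $Aut(\phi)$ and $Aut(\phi_{s})$ are canonically identified as subgroups of the gauge transformations (they are the $\mathcal{G}$-invariant sections of $Aut_{H}(P)$ for the respective actions, but the two $\mathcal{G}$-actions on $Aut_{H}(P)$ differ by an inner automorphism valued in $U$, which centralizes $U$). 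So we get an object $(P,\phi_{s},U)$ of $\mathcal{JC}_{H}$, and a morphism $T:\phi\to\phi'$ goes to itself, which one checks intertwines $\phi_{s},\phi_{s}'$ and $U,U'$ because $T$ already intertwines the monodromies and hence their Jordan components.

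In the other direction, given $(P,\phi_{s},U)\in\mathcal{JC}_{H}$, I run the construction of $\sigma_{U}$ in reverse: the $U_{n}$ are now unipotent automorphisms of $\phi_{s}$ by hypothesis, so $\log(U): \mathbb{Z}^{k}\to\mathfrak{aut}(\phi_{s})$ extends linearly to $\mathbb{C}^{k}$, yielding $c:\mathbb{C}^{k}\to Aut(\phi_{s})$, $z\mapsto\exp(\tfrac{1}{2\pi i}\sum_i z_i\log U_{e_i})$ (note the opposite sign), and Proposition \ref{generalcocycleconst} applied to $\pi:\mathcal{G}\to\mathbb{C}^{k}$ produces a $1$-cocycle $\sigma_{U}^{-1}$ valued in $(P,\phi_{s})$; set $\phi = \sigma_{U}^{-1}\phi_{s}$. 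One then verifies that the monodromy of $\phi$ is exactly $S U$ with $S$ the (semisimple) monodromy of $\phi_{s}$ — this is the same computation as in the Corollary, run backwards, using that $\pi\circ i\circ j$ is the inclusion $\mathbb{Z}^k\hookrightarrow\mathbb{C}^k$ so that $\sigma_{U}^{-1}$ evaluated on $(n,x)$ recovers $U_{n}(x)$ — and that $S$ and $U$ commute, so this is the Jordan-Chevalley decomposition of the monodromy of $\phi$. Again morphisms are sent to themselves.

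Finally I check the two functors are mutually inverse. Composing $\mathcal{JC}_{H}\to\Rep(\mathcal{G},H)\to\mathcal{JC}_{H}$: starting from $(P,\phi_s,U)$ we get $\phi=\sigma_U^{-1}\phi_s$, whose semisimple-monodromy straightening via Lemma \ref{JCdeclemma} reproduces $\phi_s$ because $\sigma_U\phi = \sigma_U\sigma_U^{-1}\phi_s = \phi_s$ — here one must observe that the cocycle extracted from $\phi$ really is $\sigma_U$, i.e. that the unipotent monodromy of $\phi$ is $U$, which was just checked. Composing the other way: from $(P,\phi)$ we get $\phi_s=\sigma_U\phi$ and then $\sigma_U^{-1}\phi_s=\phi$. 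Since both functors are the identity on underlying bundles and on morphisms, naturality of the identifications is automatic, so the composites are literally identity functors (not merely naturally isomorphic), giving an isomorphism of categories.

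The main obstacle I anticipate is the bookkeeping around the two different $\mathcal{G}$-actions on $Aut_{H}(P)$ (the one induced by $\phi$ versus the one induced by $\phi_s$) and the claim that $U$ is simultaneously a homomorphism into $Aut(\phi)$ and into $Aut(\phi_s)$: one needs that the two actions differ by conjugation by $\sigma_U$, which is itself a word in the $U_n$, so that $U_n$ is invariant under one action iff it is invariant under the other. This, together with verifying holomorphy and the cocycle identity are preserved throughout (all of which follows from Proposition \ref{generalcocycleconst} and Lemma \ref{JCdeclemma}), is the technical heart; the rest is formal.
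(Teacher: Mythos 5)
Your proposal is correct and follows essentially the same route as the paper: twist by the cocycle $\sigma_{U}$ built from the unipotent monodromy to obtain $\phi_{s}$, twist back by $\sigma_{U^{-1}}$ for the inverse functor, and observe the two operations are mutually inverse on objects and act as the identity on morphisms. The extra verifications you supply (that $U$ remains valued in $Aut(\phi_{s})$ because the two $\mathcal{G}$-actions differ by conjugation through expressions in the mutually commuting $U_{n}$, and that the monodromy of $\sigma_{U^{-1}}\phi_{s}$ has Jordan--Chevalley decomposition with unipotent part $U$) are exactly the details the paper leaves implicit, and they are argued correctly.
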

\begin{proof}
We define the functor $T : \Rep(\mathcal{G}, H) \to \mathcal{JC}_{H}$ as follows. Given a representation $(P, \phi)$, let $U : \mathbb{Z}^k \to Aut(\phi)$ be the unipotent component of the monodromy of $\phi$. Then let $\sigma_{U}$ denote the corresponding $1$-cocyle, and let $\phi_{s} = \sigma_{U} \phi$. Then $\phi_{s}$ has semisimple monodromy, and $U$ also gives automorphisms of $\phi_{s}$. Hence, we define 
\[
T(P, \phi) = (P, \phi_{s}, U). 
\]
The functor acts as the identity on morphisms. Conversely, let $(P, \phi_{s}, U)$ be an object of $\mathcal{JC}_{H}$. Then define 
\[
T^{-1}(P, \phi_{s}, U) = (P, \sigma_{U^{-1}}\phi_{s}). 
\]
The two operations are inverses to each other, and hence we obtain the desired isomorphism. 
\end{proof}

\subsection{Linearization}  \label{LinearizationSection}
In this section, we establish a linearization theorem that generalizes \cite[Proposition 3.6 ]{bischoff2020lie}. This result leads to partial linearization theorems for representations of positive homogeneous groupoids.

Recall that up to a finite cover, a connected complex reductive group $G$ has the form 
\[
G = (\mathbb{C}^*)^{k} \times S,
\]
where $S$ is a simply connected semisimple group. Denote the universal cover $\tilde{G} = \mathbb{C}^k \times S$. Let $V$ be a representation of $G$, and assume that $(\mathbb{C}^*)^{k}$ contains a $\mathbb{C}^*$ factor whose action on $V$ has only positive weights. We are interested in studying the representations of 
\[
\mathcal{G} = (\mathbb{C}^{k} \times S) \ltimes V,
\]
which is a positive homogeneous groupoid of the form considered in Section \ref{algebraicgroupex}. 

To begin, consider the following groupoid homomorphisms 
\begin{align*}
p: \tilde{G} \ltimes V \to \tilde{G}, \qquad (g,v) \mapsto g \\
\iota:  \tilde{G} \to \tilde{G} \ltimes V, \qquad g \mapsto (g,0),
\end{align*}
which satisfy $p \circ \iota = id_{\tilde{G}}$. By pulling back representations we obtain the following two functors 
\begin{align*}
p^* :  \Rep(\tilde{G}, H) \to \Rep(\tilde{G} \ltimes V, H), \\
\iota^* : \Rep(\tilde{G} \ltimes V, H) \to \Rep(\tilde{G}, H),
\end{align*}
which satisfy $\iota^{*} \circ p^{*} = id$. We say that representations in the image of $p^*$ are \emph{trivial}. Using these functors we define a \emph{linear approximation} functor which sends any given representation to an associated trivial one:
\[
L = p^{*} \circ \iota^* : \Rep(\tilde{G} \ltimes V, H) \to \Rep(\tilde{G} \ltimes V, H). 
\]
\begin{definition} \label{linearizationdefinition}
Let $\phi$  be an $H$-representation of $\tilde{G} \ltimes V$. A \emph{linearization} of $\phi$ is an isomorphism of representations
\[
T : L(\phi) \to \phi. 
\]
\end{definition}
Linearizations are not guaranteed to exist. To see this, observe that the monodromy of a trivial representation is constant, and therefore, the monodromy of a linearizable representation lies in a fixed conjugacy class. Hence, the monodromy of a representation may obstruct the existence of linearizations. The following result gives a criterion, in terms of the monodromy, for the existence of linearizations. 

\begin{theorem}\label{mainlinearizationthm}
Let $\phi$ be an $H$-representation of $\tilde{G} \ltimes V$ with semisimple monodromy. Then $\phi$ can be linearized. 
\end{theorem}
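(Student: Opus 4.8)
The plan is to reduce the statement to a purely formal computation in power series, using the $\mathbb{C}^*$-action to organize the analysis by weight. First I would use Theorem \ref{JCdecomposition} together with the hypothesis of semisimple monodromy: since the monodromy of $\phi$ is semisimple, the unipotent component $U$ is trivial, so $\phi$ itself is (isomorphic to) a representation with semisimple monodromy, and we may work directly with $\phi$. By Lemma \ref{restrictionprop} we may replace $V$ by a suitable $\mathbb{C}^*$-stable neighbourhood $B$ of the origin if convenient, though since $V$ is a linear representation it is simplest to work globally and expand everything in a Taylor series about $0 \in V$. Differentiating $\phi$ gives a representation $d\phi : \tilde{G} \ltimes V \to At(P)$ of the Lie algebroid; trivializing $P$ near the origin, this is encoded by a holomorphic family of Lie algebra homomorphisms, i.e. a map $\mathfrak{g} \ltimes V \to \mathfrak{h}$, equivalently a holomorphic $\mathfrak{h}$-valued $1$-form $\omega$ on $V$ that is equivariant/flat in the appropriate sense. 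The linear approximation $L(\phi)$ corresponds to the constant term $\omega(0)$, namely the homomorphism $\mathfrak{g} \to \mathfrak{h}$ obtained by restricting to $V = 0$.

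The key step is to construct the linearizing gauge transformation $T$ as a formal power series $T = \exp(\xi_1 + \xi_2 + \cdots)$, where $\xi_j$ is the degree-$j$ homogeneous (with respect to the positive-weight $\mathbb{C}^*$) component of an $\mathfrak{h}$-valued function on $V$, and then solve order by order for the $\xi_j$. At each order $j \geq 1$ the equation that $T$ must satisfy to intertwine $L(\phi)$ and $\phi$ reduces, after extracting the weight-$j$ part, to a linear equation of the form
\[
(\ad_{S} - j)\,\xi_j = (\text{known expression in } \omega \text{ and } \xi_1, \dots, \xi_{j-1}),
\]
where $S \in \mathfrak{h}$ is the semisimple residue, i.e. the image under $d\phi$ of the Euler/grading element $E \in \mathfrak{g}$ (the generator of the positive-weight $\mathbb{C}^*$). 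Here the crucial point is that $S$ is \emph{semisimple} — this is exactly where the hypothesis on the monodromy is used, since the monodromy around the $\mathbb{C}^*$-direction is $\exp(-2\pi i S)$ (up to sign and the paper's conventions), and semisimplicity of that monodromy forces $S$ to be semisimple (after absorbing, if necessary, a resonant integral shift; but with $U$ trivial there is no such ambiguity). Consequently $\ad_S$ acts semisimply on $\mathfrak{h}$, so $\ad_S - j$ is invertible precisely on the complement of the $j$-eigenspace of $\ad_S$; one then has to check that the right-hand side at each stage lies in the image of $\ad_S - j$, which follows from the flatness (integrability) equations satisfied by $\omega$ together with the inductive hypothesis — this is the analogue of the statement that resonant terms must be absorbed into the normal form rather than gauged away, and here, because we are comparing to the \emph{linear} approximation rather than to a normal form, the flatness equations guarantee the relevant obstruction vanishes.

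Finally, I would upgrade the formal solution to a genuine holomorphic one. Since $\phi$ comes from an analytic (indeed, in the algebraic-group setting, often polynomial) object and each $\xi_j$ is obtained by applying a fixed linear operator to polynomial data in the previous terms, a standard majorant/estimate argument shows the series $\sum_j \xi_j$ converges on a neighbourhood of the origin; by Lemma \ref{restrictionprop} convergence on such a neighbourhood suffices to conclude on all of $V$. The main obstacle I anticipate is precisely the solvability of the linear equation at each order: one must verify carefully that the inductively-constructed right-hand side has no component in the $j$-eigenspace of $\ad_S$, using the cocycle/flatness identity for $\omega$, and organize the bookkeeping of the $S$-weight decompositions of $\mathfrak{h}$, of $\mathfrak{g}$, and of the polynomial functions on $V$ so that these match up. I expect the author handles this either by a clean cohomological reformulation (the obstruction lives in a Lie algebroid cohomology group that vanishes because $\ad_S - j$ is invertible off the kernel, and the relevant cocycle is automatically a coboundary there) or by the direct inductive computation sketched above; either way, the semisimplicity of $S$ and the positivity of the weights are the two indispensable inputs.
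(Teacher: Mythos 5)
There is a genuine gap, and it sits exactly at the step you flag as "the obstruction vanishes by flatness." In your sketch the hypothesis of semisimple monodromy is invoked only at the origin, to conclude that the residue $S=dF(e)$ is a semisimple element of $\mathfrak{h}$; you then claim that the resonant component of the right-hand side of $(\mathrm{ad}_{S}-j)\,\xi_j=(\text{known})$ vanishes as a consequence of the flatness of $\omega$. This is false, and the simplest example already breaks it: take $\tilde{G}=\mathbb{C}$ acting on $V=\mathbb{C}$ with weight $1$, $H=GL(2,\mathbb{C})$, and the flat logarithmic connection $z\frac{ds}{dz}=(S_0+zN)s$ with $S_0=\mathrm{diag}(1,0)$, $N=E_{12}$, so $[S_0,N]=N$. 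Here flatness is vacuous (the algebroid has rank one), the residue $S_0$ is semisimple, yet the connection is not linearizable: its monodromy is $\exp(2\pi i S_0)\exp(2\pi i zN)=I+2\pi i zN$, trivial at $z=0$ and a nontrivial unipotent for $z\neq 0$, so it does not stay in one conjugacy class. Your induction would require solving $(\mathrm{ad}_{S_0}-1)\xi_1=\pm N$ with $N$ lying precisely in the $1$-eigenspace of $\mathrm{ad}_{S_0}$, which is impossible; nothing in the flatness equations rescues this. The point is that the theorem's hypothesis is that $M_n(v)$ is semisimple for \emph{every} $v\in V$ (your identification of the monodromy with $\exp(-2\pi i S)$ is only valid at $v=0$), and it is this global, pointwise condition that kills the resonant terms. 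Any correct proof must convert that condition into information about the higher-order coefficients, which your order-by-order scheme never does.

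This is also why the paper's route looks so different: it first uses Lemma \ref{JCdeclemma} (semisimple conjugacy classes are closed, plus the unique s-equivalence class) to show each monodromy lies in a single conjugacy class, then builds, in $k$ inductive steps, a gauge transformation making the monodromy constant --- the semisimplicity hypothesis enters a second time here, to keep the successive centralizers $C_{H}(\exp(S_1),\dots,\exp(S_{j-1}))$ reductive when $k>1$ --- and finally linearizes by a Bochner-type argument, averaging over a maximal compact subgroup of the reductive group $G$ to produce an equivariant section. That compact-averaging step is also what handles the semisimple factor of $G$ and the equivariance in all the non-Euler directions, which in your sketch is deferred to "bookkeeping" but is not a consequence of the single equation in the Euler direction. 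If you want to salvage a formal-series proof, you would need an additional argument showing that pointwise semisimplicity of the monodromy forces the resonant components to vanish order by order (essentially re-proving the constancy of the conjugacy class and the absence of a unipotent part), together with a genuine treatment of the $\mathfrak{s}$-equivariance and, for $k>1$, of the compatibility between the $k$ commuting directions; as written, the proposal proves a false statement.
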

\begin{proof}
In order to simplify the proof, we trivialize the principal bundle $P$ underlying the representation, so that we may view $\phi$ as a homomorphism $\phi: \tilde{G} \ltimes V \to H$. Let $F = \iota^*(\phi)$, so that $L(\phi)(g,v) = F(g)$. The goal is to construct an isomorphism between $L(\phi)$ and $\phi$. Our strategy in constructing the isomorphism will be to reduce ourselves to the situation of a compact group action, and then to apply a Bochner linearization argument. 

By Lemma \ref{restrictionprop}, it suffices to construct the linearization isomorphism on an arbitrarily small neighbourhood of the origin, and so we may work locally. In what follows, we always choose neighbourhoods of $0$ which satisfy the hypothesis of Lemma \ref{restrictionprop}. This is always possible, for example, by choosing polydiscs. Given such a neighbourhood $B$, we denote the restriction of the groupoid by $\mathcal{G}_{B}$. In the proof of Lemma \ref{restrictionprop} we showed that this groupoid is source simply connected. 

We will use the notation $M_{n}^{\phi}$ to denote the monodromy of a representation $\phi$, for $n \in \mathbb{Z}^k$. Let $e_{1}, ..., e_{k}$ denote the standard basis of $\mathbb{Z}^k$, and let $M_{i} = M_{e_{i}}^{\phi}$. Our assumption is that $M_{i}(v) \in H$ is semisimple for all $i$ and $v \in V$. By Lemma \ref{JCdeclemma}, each $M_{i}$ lies in a single closed conjugacy class $\mathcal{C}_{i}$ of $H$. 

The restriction $F|_{\mathbb{C}^k} : \mathbb{C}^k \to H$ is a Lie group homomorphism, and so has the form 
\[
F( \sum_{i} t_{i} e_{i}) = \exp( \frac{1}{2 \pi i } \sum_{i} t_{i} S_{i}),
\]
where $S_{i} \in \mathfrak{h}$ are mutually commuting elements of the Lie algebra. The monodromy of $L(\phi)$ is constant over $V$ and is given by 
\[
M_{e_{i}}^{L(\phi)}(v) = M_{i}(0) = \exp(S_{i}). 
\]
This implies in particular that $S_{i}$ are semisimple. 

We now construct a gauge transformation in a neighourhood of the origin of $V$ which takes $\phi$ to a representation with constant monodromy given by $\exp(S_{i})$. We build this gauge transformation inductively in $k$ steps:

\textbf{Step j.} Assume that the output of step $j-1$ is a holomorphic map $h_{j-1} : B_{j-1} \to H$, defined on a neighbourhood $B_{j-1} \subseteq V$ of the origin, and satisfying the following properties 
\begin{itemize}
\item $h_{j-1}(0) = 1$, 
\item $h_{j-1}(v) \exp(S_{i}) h_{j-1}(v)^{-1} = M_{i}(v)$ for all $v \in B_{j-1}$ and $i = 1, ..., j-1.$ 
\end{itemize}
Now define 
\[
\phi_{j-1} : \mathcal{G}_{B_{j-1}} \to H, \qquad (g,v) \mapsto h_{j-1}(g(v))^{-1} \phi(g,v) h_{j-1}(v).
\]
It follows from the properties of $h_{j-1}$ that $\phi_{j-1}(g,0) = F(g)$, and $M_{e_{i}}^{\phi_{j-1}}(v) = \exp(S_{i})$, for $i \leq j-1$. Since $\exp(S_{i})$, for $i = 1, ..., j-1$, are automorphisms of $\phi_{j-1}$, it follows that this representation is valued in the centralizer $C_{H}(\exp(S_{1}), ..., \exp(S_{j-1}))$. Let $C_{j-1}$ denote the connected component of the identity, which is a connected reductive group. Then because $\mathcal{G}_{B_{j-1}} $ is source connected, the image of $\phi_{j-1} $ is contained in $C_{j-1}$: 
\[
\phi_{j-1} : \mathcal{G}_{B_{j-1}} \to C_{j-1}. 
\]
Now consider the monodromy $M_{e_{j}}^{\phi_{j-1}}: B_{j-1} \to C_{j-1}$, which satisfies $M_{e_{j}}^{\phi_{j-1}}(0) = \exp(S_{j})$. It is semisimple in $C_{j-1}$ since it is semisimple in $H$ and the Jordan decomposition is preserved by morphisms of reductive groups. By Lemma \ref{JCdeclemma}, its image is contained in a single closed conjugacy class $\mathcal{C}_{j} \subseteq C_{j-1}$. It is therefore possible to find a neighbourhood $B_{j} \subseteq B_{j-1}$ and a holomorphic map $k_{j} : B_{j} \to C_{j-1}$ such that $k_{j}(0) = 1$ and 
\[
k_{j}(v) \exp(S_{j})k_{j}(v)^{-1} = M_{e_{j}}^{\phi_{j-1}}(v),
\]
for all $v \in B_{j}$. Now define $h_{j} = h_{j-1}k_{j}$, which is the output of Step $j$. 

Let $B = B_{k}$, $h = h_{k} : B \to H$, $C = C_{H}(\exp(S_{1}), ..., \exp(S_{k}))^{o}$, and 
\[
\psi = \phi_{k} : \mathcal{G}_{B} \to C, 
\]
which is the output of the $k^{th}$-step. The homomorphism $\psi$ satisfies $\psi(g,0) = F(g)$, and $M_{e_{i}}^{\psi} = \exp(S_{i})$ for all $i$. Now define the following action of $\mathcal{G}_{B}$ on $C \times B$: 
\[
(g, v) \ast (c,v) = (\psi(g, v) c F(g)^{-1}, g(v)). 
\]
This action satisfies: 
\[
(2 \pi i e_{i}, v) \ast (c,v) = (\exp(S_{i}) c \exp(S_{i})^{-1}, v) = (c,v).
\]
More generally, the subgroupoid $2 \pi i \mathbb{Z}^k \times B$ acts trivially. Hence, the action descends to an action of $(G \ltimes V)|_{B}$. The point $(1,0)$ is fixed by the action since $(g, 0) \ast (1,0) = (\psi(g,0)F(g)^{-1}, 0) = (1,0)$. Finally, the fibre $C \times \{ 0 \}$ is fixed by the action. Taking the linear approximation gives a representation of $G$ on the tangent space $T_{(1,0)}(C \times V)$ and it sits in the following short exact sequence of representations 
\[
0 \to Lie(C) \to T_{(1,0)}(C \times V) \to V \to 0. 
\]
Since $G$ is reductive, there is an equivariant splitting of this sequence $\sigma: V \to T_{(1,0)}(C \times V)$. 

By averaging over a maximal compact subgroup of $G$, we may construct a local holomorphic $G$-equivariant isomorphism between a neighbourhood of $(1,0)$ in $C \times V$ and a neighbourhood of the origin in $T_{(1,0)}(C \times V)$. Furthermore, this isomorphism can be constructed so that it is compatible with the projection to $V$. By transporting the splitting $\sigma$ across this isomorphism, we obtain a section $u: B \to C \times H$ which is $(G \ltimes V)|_{B}$-invariant (where $B$ may have to be shrunk further). In other words, the map $u$ satisfies 
\begin{equation*}
\psi(g, v) = u(g(v)) F(g) u(v)^{-1}. 
\end{equation*}\
The product $hu$ is the desired isomorphism between $L(\phi)$ and $\phi$. 
\end{proof}
\begin{remark}
A representation of $\tilde{G} \ltimes V$ on a principal bundle $P$ may be viewed as a representation of $\tilde{G}$ by automorphisms of the total space of $P$. In this way, Theorem \ref{mainlinearizationthm} is similar to results of Cartan \cite{Cartanpaper}, Bochner \cite{MR13161}, Hermann \cite{MR217225}, Kushnirenko \cite{MR0210833},  and Guillemin-Sternberg \cite{MR217226}. 
\end{remark}

A key step in the proof of Theorem \ref{mainlinearizationthm} is the construction of a gauge transformation which converts a representation into one with constant monodromy. When the group is $G = (\mathbb{C}^*)^{k} \times S$, there are $k$ independent monodromies. As a result, this part of the proof proceeds in $k$ inductive steps. When $k > 1$, it is crucial for the induction that the centralizer subgroup of the monodromies remains reductive, and this relies on the fact that the monodromies are semisimple. On the other hand, if $k = 1$ then we can proceed with the construction of the gauge transformation, as long as we assume that the monodromy remains in a single conjugacy class over $V$. The remainder of the proof then applies without modification. We therefore obtain the following strengthening of Theorem \ref{mainlinearizationthm} in the case $k = 1$. It remains open whether this strengthening is also valid for $k > 1$. 

\begin{corollary}
Suppose $G = \mathbb{C}^* \times S$. Then an $H$-representation of $\tilde{G} \ltimes V$ is linearizable if and only if its monodromy $M(v)$ lies in a single conjugacy class of $H$ for all $v \in V$. 
\end{corollary}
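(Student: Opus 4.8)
The plan is to treat the two implications separately; the forward one is essentially the obstruction already recorded before Theorem \ref{mainlinearizationthm}, and the reverse one is obtained by specializing the proof of Theorem \ref{mainlinearizationthm} to $k=1$ and verifying that semisimplicity of the monodromy was used in that proof only in ways that become either the hypothesis or vacuous.

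\emph{Necessity.} If $\phi$ admits a linearization $T : L(\phi)\to\phi$, then after a local trivialization the monodromy of $L(\phi)=p^{*}\iota^{*}\phi$ is $M^{L(\phi)}(v)=F(2\pi i e_{1})$, independent of $v\in V$, since $p$ collapses the $V$-direction. As $T$ is an isomorphism of representations it conjugates the monodromy automorphisms, so $M^{\phi}(v)$ is conjugate to $F(2\pi i e_{1})$ for every $v$; hence the monodromy of $\phi$ takes values in a single conjugacy class of $H$.

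\emph{Sufficiency.} Assume $M(v)$ lies in a single conjugacy class $\mathcal{C}\subseteq H$ for all $v$, and run the proof of Theorem \ref{mainlinearizationthm} with $G=\mathbb{C}^{*}\times S$ (so $k=1$). In that proof semisimplicity of the monodromy is invoked in exactly two places: (i) through Lemma \ref{JCdeclemma}, to ensure that each $M_{e_{j}}^{\phi_{j-1}}$ stays in a single closed conjugacy class so that the gauge transformation $k_{j}$ exists; and (ii) to keep the centralizer $C_{j-1}=C_{H}(\exp(S_{1}),\dots,\exp(S_{j-1}))^{o}$ reductive, which is what lets the $k$-step induction continue past step $j$. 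When $k=1$ there is only a single step, so (ii) is vacuous, and (i) is replaced directly by our hypothesis: a conjugacy class is a locally closed complex submanifold of $H$, so the holomorphic map $M$ is holomorphic into $\mathcal{C}$ near the origin, and since the pullback of the fibre bundle $H\to\mathcal{C}$, $g\mapsto g\exp(S_{1})g^{-1}$, to the contractible polydisc $B$ is trivial, $M$ lifts to a holomorphic $h : B\to H$ with $h(0)=1$ and $h(v)\exp(S_{1})h(v)^{-1}=M(v)$ (after adjusting by a constant in $C_{H}(\exp(S_{1}))$ to arrange $h(0)=1$). Conjugating $\phi$ by $h$ gives $\phi_{1}(g,v)=h(g(v))^{-1}\phi(g,v)h(v)$, which satisfies $\phi_{1}(g,0)=F(g)$ and has constant monodromy $\exp(S_{1})$, hence is valued in $C=C_{H}(\exp(S_{1}))^{o}$ — a group we do \emph{not} need to be reductive.

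From here the remainder of the proof of Theorem \ref{mainlinearizationthm} applies verbatim: form the induced action of $(G\ltimes V)|_{B}$ on $C\times B$ fixing $(1,0)$, pass to the linear approximation to obtain the short exact sequence $0\to Lie(C)\to T_{(1,0)}(C\times V)\to V\to 0$, split it $G$-equivariantly because $G$ is reductive, and Bochner-linearize by averaging over a maximal compact subgroup of $G$. The point to stress is that this whole block uses only the reductivity of the acting group $G$, never that of $C$; it produces a $(G\ltimes V)|_{B}$-invariant section $u : B\to C$ with $\phi_{1}(g,v)=u(g(v))F(g)u(v)^{-1}$, and then $hu$ is a linearization of $\phi$ over $B$, which by Lemma \ref{restrictionprop} suffices. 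I expect the only genuinely delicate point to be the construction of $h$ from a monodromy that is merely conjugacy-class-valued rather than semisimple — this is precisely where the weaker hypothesis is consumed — together with the bookkeeping check that nothing in the $k=1$ specialization secretly appeals to a property of $C$ beyond those it automatically enjoys.
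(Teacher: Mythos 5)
Your proof is correct and follows essentially the same route as the paper: necessity via the constancy of the monodromy of a trivial representation, and sufficiency by rerunning the single ($k=1$) inductive step of Theorem \ref{mainlinearizationthm} with the conjugacy-class hypothesis replacing semisimplicity, noting that reductivity of the centralizer was only needed to continue the induction and that the Bochner/splitting arguments use only reductivity of $G$. Your extra detail on lifting $M$ through the bundle $H\to\mathcal{C}$ is a fine elaboration of a step the paper leaves implicit.
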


If $D$ is a reductive free divisor in a vector space $X$ (see Section \ref{reductivefreedivisors}), then the twisted fundamental groupoid $\Pi(X,D)$ is precisely a groupoid of the form considered in the present section. Furthermore, $D$ is irreducible if and only if there is a single factor of $\mathbb{C}^*$ in the symmetry group of $D$ (i.e. if $k = 1$ above). Therefore, we obtain a linearization theorem for reductive free divisors. 

\begin{corollary} \label{redfreedivlinearization}
Let $(X, D)$ be a reductive linear free divisor, and let $(P, \phi)$ be an $H$-representation. 
\begin{itemize}
\item If the monodromy of $\phi$ is semisimple, then $\phi$ can be linearized. 
\item Suppose that $D$ is irreducible. Then $\phi$ is linearizable if and only if the value of its monodromy at a point of $X \setminus D$ is conjugate to the value at the origin. 
\end{itemize}
\end{corollary}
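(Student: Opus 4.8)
The plan is to observe that, for a reductive linear free divisor $(X,D)$, the twisted fundamental groupoid $\Pi(X,D)$ is literally one of the groupoids analysed in Section \ref{LinearizationSection}, and then to read both statements off from Theorem \ref{mainlinearizationthm} and its $k=1$ strengthening. Indeed, by the structure theory recalled in Section \ref{reductivefreedivisors}, $X = V$ is a vector space, $G$ is connected reductive with universal cover $\tilde{G} = \mathbb{C}^r \times S$ (with $S$ simply connected semisimple and $r$ the number of irreducible components of $D$), the centre $(\mathbb{C}^*)^r$ of $G$ acts by scalar multiplication on the irreducible summands $V = V_1 \oplus \cdots \oplus V_r$, and $\Pi(X,D) = \tilde{G} \ltimes V$. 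In particular each central $\mathbb{C}^*$ acts on the corresponding $V_i$ with weight one, so $\Pi(X,D)$ is a positive homogeneous groupoid of the form $(\mathbb{C}^r \times S)\ltimes V$ treated in Section \ref{LinearizationSection}, with $k = r$.

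Given this, the first bullet is immediate: if $(P,\phi)$ has semisimple monodromy then Theorem \ref{mainlinearizationthm} applies and produces a linearization in the sense of Definition \ref{linearizationdefinition}. For the second bullet, recall from Section \ref{reductivefreedivisors} that $r$ equals the number of irreducible components of $D$, so $D$ is irreducible precisely when $r = 1$; the strengthening of Theorem \ref{mainlinearizationthm} established above for $k = 1$ then says that $\phi$ is linearizable if and only if its monodromy $M(v)$ lies in a single conjugacy class of $H$ for every $v \in V$. Thus the only thing left to prove is that, for a linear free divisor, this condition is equivalent to the a priori weaker statement that $M(v_0)$ is conjugate to $M(0)$ for some point $v_0 \in X \setminus D$.

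One implication is trivial. For the converse I would assemble three ingredients. First, $X \setminus D$ is a single orbit of $\Pi(X,D)$: by Saito's criterion (Theorem \ref{SaitoCrit}) the defining function of $D$ is, up to a unit, the determinant of the matrix of a basis of $T_X(-\log D) \cong \mathfrak{g} \ltimes V$, so on $X \setminus D$ the anchor is an isomorphism and hence $X \setminus D$ is a union of open orbits; but $X \setminus D$ is the complement of a hypersurface in $\mathbb{C}^n$ and so is connected, and a connected space that is a disjoint union of open orbits is a single orbit. Second, the conjugacy class of the monodromy is constant along orbits and $M \colon V \to H$ is holomorphic, so, $X \setminus D$ being dense, $M(v)$ lies in the closure of the conjugacy class of $M(v_0)$ for every $v \in V$. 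Third, by positivity (the central $\mathbb{C}^*$ scales each orbit towards the origin) the closure of every orbit of $\Pi(X,D)$ contains $0$, so $M(0)$ lies in the closure of the conjugacy class of $M(v)$ for every $v$. Feeding the hypothesis $M(v_0) \sim M(0)$ into the second and third points shows that, for each $v$, the conjugacy classes of $M(v)$ and of $M(0)$ have the same closure; since conjugacy classes in a linear algebraic group are locally closed, a conjugacy class is the unique orbit that is open and dense in its closure, and therefore $M(v)$ is conjugate to $M(0)$ for all $v$. This is exactly the hypothesis of the $k = 1$ criterion, completing the argument.

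The soft orbit-closure manipulations in the last paragraph are routine; the one place where we genuinely leave the groupoid-theoretic framework and lean on the theory of linear free divisors is the assertion that $X \setminus D$ is a single orbit, so that is the step I would take the most care over (and the point at which the references \cite{MR2228227, MR2521436} are relevant).
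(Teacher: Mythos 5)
Your proposal is correct and follows essentially the same route as the paper: the first bullet is read off from Theorem \ref{mainlinearizationthm} after identifying $\Pi(X,D)$ with a positive homogeneous groupoid $\tilde{G}\ltimes V$, and the second from the $k=1$ strengthening together with the same orbit-closure argument (density of $X\setminus D$ plus the fact that $0$ lies in every orbit closure force the conjugacy classes of $M(v)$ and $M(0)$ to coincide, by dimension/local-closedness of conjugacy classes, exactly as in the paper's contradiction with a lower-dimensional class $\mathcal{C}'$). Your explicit checks that $X\setminus D$ is a single open orbit and that irreducibility of $D$ corresponds to $k=1$ merely spell out steps the paper leaves implicit, so this is the same proof with a bit more detail.
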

\begin{proof}
The only remaining thing to show is that if the monodromy $M_{n}(x)$ at a point of $X \setminus D$  is conjugate to $M_{n}(0)$, then $M_{n}$ lives in a single conjugacy class over all $X$. Let $\mathcal{C} \subset H$ be the conjugacy class of $M_{n}(0)$. Then, by assumption, $M_{n}(x) \in \mathcal{C}$ for all $x \in X \setminus D$. Therefore, for $x \in D$, we have that $M_{n}(x) \in \overline{\mathcal{C}}$, the closure of the conjugacy class. So if $M_{n}(x) \not \in \mathcal{C}$, then it must lie in a conjugacy class $\mathcal{C}'$ of lower dimension. But $0$ lies in the closure of the orbit of $x$, and this implies that $M_{n}(0) \in \overline{\mathcal{C}'}$, which is a contradiction. 
\end{proof}

\section{Classification theorems} \label{normalformtheorems}
In this section, we apply the structure theory developed in Section \ref{ReptheoryWHG} to obtain classification and normal form theorems for representations of positive homogeneous groupoids and their Lie algebroids. In particular, we will obtain normal form theorems for the examples of Sections \ref{algebraicgroupex} and \ref{logconnectionschapt}.

\subsubsection*{Set-up}
We start by describing the general set-up for this section.  Let $(\mathcal{G}, i, \pi)$ be a positive homogeneous Lie groupoid over the vector space $V$. Let $G = (\mathbb{C}^*)^k \times S$ be a reductive Lie group, with $S$ semisimple and simply connected. We assume that this group is equipped with the following: 
\begin{itemize}
\item a linear action on $V$ which extends the given action of $(\mathbb{C}^*)^k$,  
\item an injective homomorphism $r: \tilde{G} \ltimes V \to \mathcal{G}$ extending the morphism $i$, where $\tilde{G}$ is the universal cover of $G$. 
\end{itemize}
We illustrate the set-up with the following diagram of groupoids. 
\[
\begin{tikzpicture}[scale=1.5]
\node (A) at (0,1) {$ \mathbb{C}^k \ltimes V$};
\node (B) at (2,1) {$ \mathbb{C}^k$};
\node (C) at (-1, 0) {$\tilde{G}$};
\node (D) at (0,0) {$\tilde{G} \ltimes V$};
\node (E) at (1,0) {$\mathcal{G}$};

\path[->,>=angle 90]
(A) edge node[above]{$p$} (B)
(D) edge node[below]{$r$} (E)
(A) edge (D)
(A) edge node[above]{$i$} (E)
(E) edge node[below]{$\pi$} (B);

\path[->] (-0.85,0.05) edge node[above]{$\iota$} (-0.35, 0.05);
\path[->] (-0.35, -0.05) edge node[below]{$p$} (-0.85,-0.05);
\end{tikzpicture}
\]

Each groupoid morphism in this diagram induces a pullback functor between categories of representations. Using Theorem \ref{JCdecomposition}, we also have a functor $( \ )_{s} : \Rep(\mathcal{G}, H) \to \Rep(\mathcal{G}, H)$ which extracts the semisimple component of a representation. Define the following \emph{semisimple residue} functor 
\[
Res_{s} : \Rep(\mathcal{G}, H) \to \Rep(\tilde{G}, H), \qquad (P, \phi) \mapsto \iota^{*} r^*(P, \phi_{s}). 
\]
Given a representation $(P, \phi)$, we can trivialise the fibre above the origin $P_{0} \cong H$. This allows us to view the semisimple residue of $\phi$ as a Lie group homomorphism $F: \tilde{G} \to H$. This homomorphism is uniquely determined up to conjugation in $H$. Fixing it, we define $\Rep_{F}(\mathcal{G}, H)$ to be the full subcategory of representations whose semisimple residue is isomorphic to $F$.

Taking the derivative of $F$, we obtain a Lie algebra homomorphism 
\[
dF : \mathfrak{g} \to \mathfrak{h},
\]
where $\mathfrak{g}$ is the Lie algebra of $G$. This Lie algebra decomposes as $\mathfrak{g} = \mathbb{C}^{k} \oplus \mathfrak{s}$, with summands respectively given by the centre and the derived subalgebra. Let $\{ e_{i} \}_{i = 1}^{k}$ denote the standard basis of $\mathbb{C}^{k}$, and let $S_{i} = dF(e_{i})$, which is a family of mutually commuting semisimple elements of $\mathfrak{h}$. Let $\chi: \mathfrak{s} \to \mathfrak{h}$ denote the restriction of $dF$ to the Lie algebra of $S$. Then the $S_{i}$ commute with the elements in the image of $\chi$. 

Since the homogeneous groupoid is positive, there is a factor $\mathbb{C}^* \subseteq (\mathbb{C}^*)^{k}$ which acts on $V$ with strictly positive weights. Fix this factor, let $e \in span_{\mathbb{Z}}(e_{i})$ denote it's infinitesimal generator, and let $D = dF(e) \in \mathfrak{h}$. 

Let $A$ be the Lie algebroid of $\mathcal{G}$, and let $\rho: A \to TV$ denote its anchor map. The map $dr$ allows us to view $\mathfrak{g} \ltimes V$ as a subalgebroid of $A$. Viewing the basis elements $\{ e_{i} \}_{i = 1}^{k} \subset \mathbb{C}^k$ as constant sections of $A$ we have the linear vector fields $E_{i} = \rho(e_{i})$, which are the infinitesimal generators of the $(\mathbb{C}^{*})^{k}$-action on $V$. Let $E = \rho(e)$ be the infinitesimal generator of the chosen $\mathbb{C}^*$-factor. This vector field defines a grading of the functions on $V$, which we call the \emph{$E$-degree}. Note that because all degrees are strictly positive, the subspace of functions with a given $E$-degree is finite dimensional. 

Let $f_{1}, ..., f_{l} \in \mathfrak{s}$ be a basis, and let $[f_{i}, f_{j}] = \sum_{k} c_{ij}^{k} f_{k}$, where $c_{ij}^{k} \in \mathbb{C}$ are the structure constants of the Lie algebra. Viewing the basis elements as sections of $A$, they determine linear vector fields $Y_{i} = \rho(f_{i})$ that generate the action of $S$ on $V$.

Recall from Section \ref{Firstsection} that an $H$-representation $(P,\phi)$ of the groupoid may be differentiated to give a representation of $A$ (also called a flat $A$-connection):
\[
\nabla : A \to At(P).
\]
We assume that the principal $H$-bundle $P$ is trivial. In this case, the connection has the form 
\[
\nabla = d + \omega,
\]
where $\omega \in \Omega_{A}^{1} \otimes \mathfrak{h}$ is the connection $1$-form, which satisfies the following Maurer-Cartan equation 
\begin{equation} \label{MaurerCartanEquation}
d\omega + \frac{1}{2}[\omega, \omega] = 0. 
\end{equation}
Recall that the expression on the left hand side is the curvature of the connection. For a general $1$-form $\omega$ we denote it by $R_{\omega} \in \Omega_{A}^{2} \otimes \mathfrak{h}$. 

In this context, a gauge transformation is a holomorphic map $h: X \to H$, and it acts on the connection $1$-form $\omega$ by sending it to 
\[
Ad_{h}(\omega) + (dh) h^{-1}, 
\]
where $(dh) h^{-1}$ denotes the pullback by $h$ to $\Omega^{1}_{A} \otimes \mathfrak{h}$ of the right-invariant Maurer-Cartan form on $H$. Two connection $1$-forms which are related by a gauge transformation correspond to isomorphic representations. 

\subsection{Structure of positive homogeneous groupoids} \label{StructureofLieAlg}
Positive homogeneous groupoids are rigid, and this allows several of their structures to be put into normal form. Because this simplifies the description of their representations, we derive some of these normal forms in the present section. 

First, consider the homomorphism $\pi \circ r : \tilde{G} \ltimes V \to \mathbb{C}^k$. Restricting it to $\mathbb{C}^{k} \ltimes V$, it gives the projection $p$. Restricting it via $\iota$ to $\tilde{G} = \mathbb{C}^k \times S$, this map must vanish on $S$ since this group is semisimple. On the other hand, the map $\pi \circ r$ might not vanish on $S \ltimes V$, but it is equivalent to a map that does. To see what this means, recall that given a function $f : V \to \mathbb{C}^{k}$, the pullback by the source and target map,
\[
\delta(f) = t^{*}f - s^{*}f: \mathcal{G} \to \mathbb{C}^k,
\]
is a Lie groupoid homomorphism, and this may be added to $\pi$. As long as $f$ is invariant under the $(\mathbb{C}^{*})^{k}$-action, the deformed map $\pi + \delta(f)$ defines a new structure of a homogeneous groupoid. As the result of the following proposition, we will assume that $\pi$ has the property that its restriction to $\tilde{G} \ltimes V$ is the projection onto $\mathbb{C}^{k}$. 

\begin{proposition}
There exists a holomorphic map $f: V \to \mathbb{C}^{k}$ such that $(\mathcal{G}, i, \pi' = \pi + \delta(f))$ is a homogeneous groupoid, and the restriction $\pi' \circ r$ is the projection from $(\mathbb{C}^k \times S) \ltimes V$ to $\mathbb{C}^k$. 
\end{proposition}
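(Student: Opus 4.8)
The plan is to construct $f$ directly from $\pi \circ r$ by peeling off the part that obstructs the restriction from being the projection. Write $q = \pi \circ r : (\mathbb{C}^k \times S) \ltimes V \to \mathbb{C}^k$. By hypothesis $\pi \circ i = p$, so $q$ restricted to $\mathbb{C}^k \ltimes V$ is already the projection $p$. Since $S$ is semisimple, any Lie group homomorphism $S \to \mathbb{C}^k$ is trivial, so $q|_{\iota(\tilde G)}$ is the projection onto the $\mathbb{C}^k$-factor as well. The issue is only that $q$ need not be constant along the $V$-directions in a way compatible with these two restrictions: concretely, $q(g,v) - q(g,0)$ measures the failure, and I want to absorb it into a coboundary $\delta(f) = t^* f - s^* f$.

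First I would define $f : V \to \mathbb{C}^k$ by $f(v) = q(\gid, v) \cdot (\text{something normalizing at the origin})$ — more precisely, since $(\gid,v)$ is an arrow from $v$ to $v$ (an isotropy element over $v$) for the action groupoid, one should instead look at an arrow $g$ with $s(g) = 0$, $t(g) = v$, if one exists; in the positive homogeneous setting such arrows need not exist for all $v$, so the cleaner route is: set $f(v) := q(\iota(1_{\tilde G}), \text{—})$ is not quite right either. The correct construction: pick the section $\sigma : V \to (\tilde G \ltimes V)$, $v \mapsto (1, v)$ (the identity bisection), which is not a homomorphism, and note $q \circ \sigma : V \to \mathbb{C}^k$ vanishes at $0$ since $q(1,0) = p(1,0) = 0$. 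Define $f := q \circ \sigma$. Because $q$ is $(\mathbb{C}^*)^k$-equivariant (the conjugation action of $\mathbb{C}^k$ descends to $\mathcal{G}$ by Proposition \ref{liftedactionongpd}, and both $\pi$ and $r$ are equivariant, $\mathbb{C}^k$ acting on $\mathbb{C}^k$ trivially since it's the abelianization-type quotient), $f$ is $(\mathbb{C}^*)^k$-invariant, which is exactly the condition needed to ensure $\pi' = \pi + \delta(f)$ still defines a homogeneous groupoid structure (it does not disturb centrality, and $\pi' \circ i = p + \delta(f)|_{\mathbb{C}^k \ltimes V} = p$ since $f$ is pulled back from $V$ and the source/target agree on $\mathbb{C}^k \ltimes V$ only up to the group action — here one uses $f$'s invariance so that $t^*f = s^*f$ on arrows in the image of $i$... wait, that would kill $\delta(f)$ there, which is what we want).

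Then I would verify $\pi' \circ r$ is the projection. For $(g,v) \in (\tilde G \ltimes V)$ with $g = (\gamma, v) \in \tilde G \ltimes V$ in the action-groupoid sense, source is $v$ and target is $\gamma \ast v$. Compute $(\pi' \circ r)(g,v) = q(g,v) + f(\gamma \ast v) - f(v) = q(g,v) + q(1, \gamma\ast v) - q(1,v)$. Now use the groupoid homomorphism property of $q$: writing the arrow $(g,v)$ as a composite involving identity arrows and the group element, and using that $q$ is a homomorphism, one gets $q(g,v) = q(1,\gamma\ast v)^{-1}$-type relation... more carefully, in the action groupoid $(g, v)$ followed by nothing; the key identity is that $q(\gamma, v)$ depends only on $\gamma$ up to this coboundary, and one shows $q(g,v) - q(1,\gamma\ast v) + q(1,v)$ equals the image of $\gamma$ under the projection $\tilde G = \mathbb{C}^k \times S \to \mathbb{C}^k$, by checking it is a genuine homomorphism $\tilde G \ltimes V \to \mathbb{C}^k$ that is independent of $v$ (constant along $V$) and restricts correctly on $\mathbb{C}^k \ltimes V$ and on $\iota(\tilde G)$, hence is the projection.

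The main obstacle I anticipate is making the cocycle bookkeeping precise: verifying that $q(g,v) - f(t(g)) + f(s(g))$ really is independent of $v$ requires using the homomorphism property of $q$ together with the fact that every arrow of $\tilde G \ltimes V$ factors through the group (i.e. $(g,v) = (g, 0) \cdot_{\text{conjugate}} \ldots$ is not literally valid since $0$ and $v$ differ), so the honest argument goes through the infinitesimal/Lie algebroid picture or through connectedness: the map $v \mapsto q(g,v) - f(\gamma\ast v) + f(v)$ is holomorphic, $(\mathbb{C}^*)^k$-invariant, and one shows its differential along $V$ vanishes using that $d(\pi \circ r)$ already restricts correctly to the subalgebroid $\mathfrak{g} \ltimes V$ — reducing to a statement about the algebroid morphism $d\pi \circ dr : \mathfrak{g}\ltimes V \to \mathbb{C}^k$ and choosing the primitive $f$ at the algebroid level first, then integrating by Lie's second theorem as is done repeatedly in Section \ref{logconnectionschapt}.
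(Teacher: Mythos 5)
There is a genuine gap, and it begins with your definition of $f$. You set $f := q \circ \sigma$, where $q = \pi \circ r$ and $\sigma(v) = (1,v)$ is the identity bisection. But $q$ is a groupoid homomorphism into the group $\mathbb{C}^k$ (viewed as a groupoid over a point), so it sends \emph{every} identity arrow to $0$, not just the one at the origin; hence $f \equiv 0$, $\delta(f) = 0$, and $\pi' = \pi$, which accomplishes nothing unless $\pi \circ r$ was already the projection. The subsequent verification then quietly assumes the one thing that actually has to be proved: that $q(g,v) + f(t(g,v)) - f(s(g,v))$ is independent of $v$, i.e.\ that the homomorphism $c(z,s,v) := q(z,s,v) - z$ (which, as you correctly note, vanishes on $\mathbb{C}^k \ltimes V$ and on $\iota(\tilde{G})$) is a coboundary $-\delta(f)$. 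With $f \equiv 0$ this becomes the false assertion that $q$ itself does not depend on $v$. No amount of cocycle bookkeeping or appeal to the homomorphism property of $q$ can produce the required primitive $f$: this is a cohomological vanishing statement, and it is exactly where the content of the proposition lies. Your fallback sketch at the infinitesimal level has the same hole: $d(\pi \circ r) - \mathrm{pr}$ is indeed a closed algebroid $1$-form valued in $\mathbb{C}^k$, but closedness does not give exactness, and "choosing the primitive $f$ at the algebroid level" is precisely the unproved step.

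The paper closes this gap by invoking Theorem \ref{mainlinearizationthm}: it regards $\phi_0 = \exp \circ \pi \circ r : \tilde{G} \ltimes V \to (\mathbb{C}^*)^k$ as a representation, observes that its monodromy is trivial (in particular semisimple), and applies the linearization theorem to obtain $\tilde{f} : V \to (\mathbb{C}^*)^k$ with $\phi_0(z,s,v)\,\tilde{f}((z,s) \ast v)\,\tilde{f}(v)^{-1} = \exp(z)$; the desired $f$ is then a logarithmic lift of $\tilde{f}$ (available since $V$ is contractible), and the identity $\pi' \circ r = \mathrm{pr}_{\mathbb{C}^k}$ follows by connectedness since the two sides agree modulo $2\pi i \mathbb{Z}^k$ and at the identity arrows. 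The representation-theoretic input hidden in that theorem (Bochner averaging over a maximal compact subgroup, made applicable by the semisimplicity of the monodromy) is what your argument is missing; to repair your proof, replace your definition of $f$ by the output of Theorem \ref{mainlinearizationthm} applied to $\exp \circ \pi \circ r$, or supply an independent proof that the closed form $d(\pi\circ r) - \mathrm{pr}$ is exact.
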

\begin{proof}
Consider the representation $\phi_{0} = \exp \circ \pi \circ r : \tilde{G} \ltimes V \to (\mathbb{C}^{*})^{k}$. It has trivial monodromy, and hence by Theorem \ref{mainlinearizationthm}, there is a linearization $\tilde{f}: \phi_{0} \to L(\phi_{0})$. Concretely, the linearization is a map $\tilde{f} : V \to (\mathbb{C}^{*})^{k}$, and because $\pi \circ r \circ \iota$ is the projection of $\mathbb{C}^{k} \times S$ onto $\mathbb{C}^{k}$, it satisfies the following equation 
\[
\phi_{0}(z,s,v) \tilde{f}( (z,s) \ast v) \tilde{f}(v)^{-1} = \exp(z),
\]
for $(z,s,v) \in (\mathbb{C}^k \times S) \ltimes V$. Let $f: V \to \mathbb{C}^{k}$ be a lift of $\tilde{f}$. Then this map has the desired properties. 
\end{proof}

Next, we consider the Lie algebroid $A$. The universal cover $\tilde{G}$ acts on $\mathcal{G}$ by conjugation. By Proposition \ref{liftedactionongpd}, this descends to an action of $G$. Furthermore, this action is by groupoid automorphisms. Differentiating, we obtain an action of $G$ on $A$ by Lie algebroid automorphisms, and this action lifts the linear action of $G$ on $V$. Hence, we may view $A$ as a representation of $G \ltimes V$. Let $\phi: G \ltimes V \to \mathcal{G}(A)$ be the representation, and let $\nabla: \mathfrak{g} \ltimes V \to At(A)$ be the corresponding flat connection. Recall that for a vector bundle $A$, the sections of the Atiyah algebroid $At(A)$ may be identified with the first order differential operators on $A$ whose symbols are vector fields. Given an element $u \in \mathfrak{g}$, viewed as a constant section of $\mathfrak{g} \ltimes V$, and a section $s \in \Gamma(A)$, we have 
\begin{equation} \label{bracketconnectionequation}
\nabla_{u}(s) = [u, s]. 
\end{equation}
The monodromy of the representation $\phi$ is trivial, and so by Theorem \ref{mainlinearizationthm}, $\phi$ may be linearized. However, note that the sub-bundle $\mathfrak{g} \ltimes V$ is preserved by $\phi$, and that this sub-representation is already trivial. Indeed, conjugation by $\tilde{G}$ preserves $\tilde{G} \ltimes V$, and acts on $\mathfrak{g}$ by the adjoint representation. Hence, we would like to find a linearization which is adapted to this sub-representation. 

\begin{definition}
A linearization $T : ( A_{0} \times V, L(\phi) ) \to (A, \phi)$ is adapted to $\mathfrak{g}$ if it restricts to the identity on $\mathfrak{g} \times V$. 
\end{definition}

\begin{lemma} \label{adaptedlinearization}
The representation $(\mathfrak{g} \times V \subseteq A, \phi)$ admits a linearization which is adapted to $\mathfrak{g}$. 
\end{lemma}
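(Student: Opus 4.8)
The plan is to upgrade the linearization furnished by Theorem~\ref{mainlinearizationthm} to one that fixes the subbundle $\mathfrak{g}\times V$ pointwise, by exploiting the fact that the $\mathfrak{g}$-subrepresentation is \emph{already} trivial and that $G$ is reductive, so that we have equivariant splittings available at every stage. Concretely, let $T_{0}:(A_{0}\times V, L(\phi))\to (A,\phi)$ be any linearization, which exists by Theorem~\ref{mainlinearizationthm} since the monodromy of the conjugation representation $\phi$ on $A$ is trivial (it factors through the contractible/unipotent part and the fibre over $0$). Both $A$ and $A_{0}\times V$ contain the trivial subrepresentation $\mathfrak{g}\times V$, and since conjugation by $\tilde G$ acts on $\mathfrak{g}$ by the adjoint representation, $T_{0}$ carries $\mathfrak{g}\times V$ isomorphically onto $\mathfrak{g}\times V$; but not necessarily by the identity. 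So $T_{0}|_{\mathfrak{g}\times V}$ is an automorphism $\tau$ of the trivial representation $\mathfrak{g}\times V$, i.e.\ a holomorphic map $V\to \mathrm{Aut}_{\mathfrak{g}}$ that is $G\ltimes V$-invariant; invariance forces it to be constant equal to some $\tau_{0}\in\mathrm{Aut}(\mathfrak{g})$ commuting with the $G$-action (by the rigidity argument of Lemma~\ref{JCdeclemma}: an invariant section is determined by its value at $0$ since $0$ lies in the closure of every orbit).

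The next step is to correct $T_{0}$ so that this constant automorphism becomes the identity, without destroying the linearization property. The issue is that $\tau_{0}^{-1}$ is an automorphism of $\mathfrak{g}$ as a $G$-representation, and we need to extend it to an automorphism of the trivial representation $A_{0}\times V$ that we can precompose with. Since $G$ is reductive, the subrepresentation $\mathfrak{g}\subseteq A_{0}$ admits a $G$-equivariant complement $A_{0}=\mathfrak{g}\oplus Q$, and we may set $\Sigma=\tau_{0}^{-1}\oplus \mathrm{id}_{Q}$, a $G$-equivariant automorphism of $A_{0}$; then $\Sigma\times\mathrm{id}_{V}$ is an automorphism of the trivial representation $(A_{0}\times V, L(\phi))$. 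Replacing $T_{0}$ by $T:=T_{0}\circ(\Sigma\times\mathrm{id}_{V})$ yields a linearization which, by construction, restricts to the identity on $\mathfrak{g}\times V$, hence is adapted to $\mathfrak{g}$ in the sense of the definition.

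One must verify that $\Sigma\times\mathrm{id}_V$ really is an isomorphism of representations of $G\ltimes V$, not merely of $G$: this is immediate because $L(\phi)$ is pulled back from $G$ along $G\ltimes V\to G$, so a $G$-equivariant linear map of the fibre is automatically an intertwiner for the whole action groupoid. One must also confirm the claimed rigidity, namely that $T_{0}|_{\mathfrak{g}\times V}$ is constant: this follows because it is a morphism of trivial representations, hence an invariant section of the bundle of groups $\mathrm{Aut}(\mathfrak{g})\times V$ under the (trivial on fibres, translating on base) $G\ltimes V$-action, and such an invariant section is constant since every orbit closure contains $0$ and the section is continuous — exactly the mechanism used repeatedly in Section~\ref{ReptheoryWHG}. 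The main obstacle, and the only genuinely delicate point, is checking that $\tau_{0}$ commutes with the $G$-action on $\mathfrak{g}$ so that $\Sigma$ is well-defined and $G$-equivariant; but this is forced: $T_{0}$ is an intertwiner, so on the subrepresentation $\mathfrak{g}\times V$ it intertwines the (identical) $G$-actions, and a constant intertwiner of a representation with itself is precisely an element of the commutant. Everything else is a routine bookkeeping of equivariance, using reductivity of $G$ only for the existence of the complement $Q$.
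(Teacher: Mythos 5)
There is a genuine gap at the very first step: the claim that an arbitrary linearization $T_{0}\colon (A_{0}\times V, L(\phi))\to (A,\phi)$ "carries $\mathfrak{g}\times V$ isomorphically onto $\mathfrak{g}\times V$" is not justified, and it is false in general. An isomorphism of representations is only required to intertwine the actions; it has no reason to preserve a \emph{chosen} subrepresentation unless that subrepresentation is canonically characterized (an isotypic piece, a socle, etc.), and $\mathfrak{g}\subseteq A_{0}$ is not. Concretely, the defect of $T_{0}$ on the subbundle is measured by the composite $\mathfrak{g}\times V\to A\to A/(\mathfrak{g}\times V)$, i.e.\ by an equivariant bundle map from the trivial bundle with fibre the adjoint representation into (after linearizing) $W\times V$; such maps can be nonzero whenever constituents of the adjoint representation occur in $W$, or more generally whenever there are equivariant polynomial maps $V\to \mathrm{Hom}(\mathfrak{g},W)$ of the appropriate weights. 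Even after normalizing so that $T_{0}(0)=\mathrm{id}$ (which you can do, since $T_{0}(0)$ is a $G$-intertwiner of $A_{0}$ with itself), there is no reason that $T_{0}(v)(\mathfrak{g})=\mathfrak{g}$ for $v\neq 0$. Since everything after this point in your argument (constancy of $\tau_{0}$ by the contraction to the origin, equivariance of $\Sigma=\tau_{0}^{-1}\oplus\mathrm{id}_{Q}$, precomposition) is predicated on this preservation, the proof does not go through. The parts you flag as delicate are actually the easy parts; the part you treat as automatic is the whole content of the lemma.

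This is exactly the difficulty the paper's proof is built to avoid: rather than correcting an arbitrary linearization after the fact, it constructs the linearization so that it fixes $\mathfrak{g}$ from the start. One forms the frame bundle $Fr(A)$ of isomorphisms $A_{0}\to A_{v}$ with the action $(g,v)\ast T=\phi(g,v)\circ T\circ L(\phi)(g,v)^{-1}$, restricts to the sub-bundle $Fr(\mathfrak{g},A)$ of frames that are the identity on $\mathfrak{g}$ (this sub-bundle is preserved precisely because both $\phi$ and $L(\phi)$ act by the adjoint action on $\mathfrak{g}\times V$, and $\mathrm{id}_{A_{0}}$ is a fixed point in the fibre over $0$), and then runs the Bochner-type averaging argument of Theorem \ref{mainlinearizationthm} to produce an invariant section of $Fr(\mathfrak{g},A)$, which is the adapted linearization. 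If you want to salvage your approach, you would need to show that the equivariant subbundle $T_{0}^{-1}(\mathfrak{g}\times V)\subseteq A_{0}\times V$ can be moved to $\mathfrak{g}\times V$ by an equivariant automorphism of the trivial bundle; proving that is essentially equivalent to redoing the equivariant-section argument above, so nothing is gained over the paper's route.
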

\begin{proof}
This proof has many similarities with the proof of Theorem \ref{mainlinearizationthm} and we will mainly note the differences. Let $Fr(A)$ be the frame bundle of $A$, consisting of linear isomorphisms from $A_{0}$ to the fibres of $A$. It is a principal $GL(A_{0})$ bundle, but we treat it as a locally trivial fibre bundle. There is an action of $G \ltimes V$ on $Fr(A)$ given by the following formula 
\[
(g, v) \ast T = \phi(g,v) \circ T \circ L(\phi)(g,v)^{-1},
\]
where $(g,v) \in G \ltimes V$ and $T: A_{0} \to A_{v}$ is a linear isomorphism. Let $GL(\mathfrak{g}, A_{0})$ denote the subgroup of $GL(A_{0})$ consisting of linear transformations that restrict to the identity on $\mathfrak{g}$. There is a reduction of structure $Fr(\mathfrak{g}, A) \subset Fr(A)$ to this subgroup, consisting of linear isomorphisms from $A_{0}$ to the fibres of $A$ that restrict to the identity on $\mathfrak{g}$. This subbundle is preserved by the action of $G \ltimes V$ since both $\phi$ and $L(\phi)$ restrict to the adjoint action on $\mathfrak{g} \times V$. The fibre above $0$ is preserved by the action, and the element $id_{A_{0}} \in Fr(\mathfrak{g}, A)|_{0}$ is a fixed point. Therefore, following the proof of Theorem \ref{mainlinearizationthm}, we may linearize the action in order to produce an equivariant section of the bundle. This is the desired linearization.
\end{proof}

Let $T$ be an adapted linearization produced from Lemma \ref{adaptedlinearization}. We may use this to equip $A_{0} \times V$ with the structure of a Lie algebroid (which is of course isomorphic to $A$). Let $u \in \mathfrak{g}$ and let $a \in A_{0}$, both viewed as constant sections of $A_{0} \times V$. Then computing their bracket, we get 
\[
T([u,a]) = [T(u), T(a)] = [u, T(a)] = \nabla_{u}(T(a)) = T(L(\nabla)_{u}(a)). 
\]
In the first equality, we use the fact that $T$ is a morphism of Lie algebroids, in the second we use the fact that $T|_{\mathfrak{g}} = id$, in the third equality we apply Equation \ref{bracketconnectionequation}, and in the fourth equality, we use that $T$ is a linearization. Therefore, we conclude that the bracket of sections $[u,a]$ is given by a linear action of $\mathfrak{g}$ on $A_{0}$. 

Now choose a decomposition $A_{0} = \mathfrak{g} \oplus W$ as a $G$-representation, and let $\{w_{i}\}_{i = 1}^{d}$ be a basis of $W$ consisting of weight vectors for the $(\mathbb{C}^*)^k$-action. Therefore, we have the following bracket relations 
\[
[e_{i}, w_{j}] = n_{ij} w_{j}, \qquad [e, w_{j}] = m_{j}w_{j},
\]
where $n_{ij}, m_{j} \in \mathbb{Z}$ are integers. 

\begin{corollary}
The Lie algebroid $A$ of $\mathcal{G}$ has a canonical algebraic structure. 
\end{corollary}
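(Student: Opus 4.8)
The plan is to complete the identification set up in the preceding paragraphs. Using the adapted linearization $T$ of Lemma \ref{adaptedlinearization} we regard $A$ as the trivial bundle $A_{0} \times V$ carrying the transported Lie algebroid structure, and the goal is to show that, in the frame $\{e_{i}\} \cup \{f_{j}\} \cup \{w_{k}\}$ of $A_{0}$ and the linear coordinates of $V$, the anchor $\rho$ and the bracket are given by polynomial data. Since a Lie algebroid structure on a trivial bundle over $V$ is completely determined, via the Leibniz rule, by the vector fields $\rho(a)$ for $a \in A_{0}$ together with the holomorphic $A_{0}$-valued functions $[a,b]$ for $a, b \in A_{0}$, polynomiality of these finitely many pieces of data is exactly what endows $A$ with the structure of an algebraic Lie algebroid over the affine space $V$.

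First I would treat the anchor. The vector fields $\rho(e_{i}) = E_{i}$ and $\rho(f_{j}) = Y_{j}$ are linear by construction, so it remains to handle $\rho(w_{k})$. Applying the Lie algebroid morphism $\rho$ to the identity $[e, w_{k}] = m_{k} w_{k}$ yields $[E, \rho(w_{k})] = m_{k}\,\rho(w_{k})$, i.e. $\rho(w_{k})$ is an eigenvector for the Lie derivative along $E$. Because $E$ generates a $\mathbb{C}^*$-action with strictly positive weights, the $E$-homogeneous components of any vector field are finite sums of monomial vector fields, and the eigenvalue equation forces $\rho(w_{k})$ to be a single such homogeneous---hence polynomial---vector field. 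Next I would treat the bracket. The brackets among $\{e_{i}, f_{j}\}$ are the constant structure constants of $\mathfrak{g}$, and the brackets $[e_{i}, w_{k}]$ and $[f_{j}, w_{k}]$ are constant since, as established above, $[u,a]$ for $u \in \mathfrak{g}$ and $a \in A_{0}$ is the value of a linear action of $\mathfrak{g}$ on $A_{0}$. The only remaining case is $[w_{i}, w_{j}]$, which a priori is merely a holomorphic section $V \to A_{0}$. Here I would use the Jacobi identity with $e$: from $[e, w_{i}] = m_{i}w_{i}$ and $[e, w_{j}] = m_{j}w_{j}$ one gets $[e, [w_{i}, w_{j}]] = (m_{i}+m_{j})[w_{i}, w_{j}]$. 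Expanding $[w_{i}, w_{j}] = \sum_{\alpha} g_{\alpha}\, a_{\alpha}$ in an $e$-weight basis $\{a_{\alpha}\}$ of $A_{0}$, with $a_{\alpha}$ of weight $\ell_{\alpha}$, the Leibniz rule and $\rho(e) = E$ give $E(g_{\alpha}) = (m_{i}+m_{j}-\ell_{\alpha})\, g_{\alpha}$ for each $\alpha$. Since a holomorphic eigenfunction of $E$ on $V$ is necessarily a homogeneous polynomial (all $E$-degrees of monomials being positive integers, so that each $E$-graded component of the ring of holomorphic functions is a finite-dimensional space of polynomials), each $g_{\alpha}$, and hence $[w_{i}, w_{j}]$, is a polynomial.

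Combining these facts, the structure maps of $A$ are polynomial in the chosen frame, which exhibits $A$ as the analytification of an algebraic Lie algebroid over $V \cong \mathbb{A}^{n}$. Its canonicity follows by noting that any two adapted linearizations, or any two choices of $G$-stable complement $W$ and weight basis, are related by a holomorphic bundle automorphism of $A_{0}\times V$ that fixes $\mathfrak{g}$ and intertwines the two Lie algebroid structures; running the same $E$-eigenvalue argument on this automorphism shows it is polynomial, so the algebraic models obtained from different choices are canonically (algebraically) isomorphic. The step I expect to be the only genuinely delicate one is global in nature: Lemma \ref{adaptedlinearization} a priori produces the adapted linearization only on a neighbourhood $B$ of the origin, and one must promote it to a global one. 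This should follow from the positivity of the $\mathbb{C}^*$-action---every orbit meets $B$, so the equivariance of the local linearization with respect to the action of Proposition \ref{liftedactionongpd} determines it unambiguously on all of $V$---after which $A \cong A_{0}\times V$ globally and the polynomial structure maps are defined on all of $V$.
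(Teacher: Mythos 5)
Your proof is correct and takes essentially the same route as the paper: pass to the trivialization $A_{0} \times V$ given by the adapted linearization, and use the eigenvalue equations obtained by bracketing with $e$ (via the anchor, the Jacobi identity, and the Leibniz rule) to force the components of $\rho(w_{k})$ and the coefficients of $[w_{i},w_{j}]$ to be polynomials of bounded $E$-degree. The only cosmetic differences are that the paper handles canonicity by deferring to the algebraicity of automorphisms of $L(\phi)$ established in Section \ref{studyingsymmetries} (the same $E$-eigenvalue argument you run inline), and the globality issue you flag is already absorbed into Lemma \ref{adaptedlinearization} through the Morita-equivalence reduction of Lemma \ref{restrictionprop} used in the proof of Theorem \ref{mainlinearizationthm}.
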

\begin{proof}
The proof proceeds by showing that the data defining the Lie algebroid structure on $A_{0} \times V$, namely the bracket and the anchor map, are algebraic. This suffices because this algebroid structure is well-defined up to an automorphism of $L(\phi)$, and we will show in Section \ref{studyingsymmetries} that such automorphisms are algebraic.

For this proof, extend the basis $w_{i}$ of $W$ to a basis of $A_{0}$, keeping the same notation (the elements of $\mathfrak{g}$ have degree $0$ since $e$ lies in the centre of $\mathfrak{g}$). First, applying the anchor map to the equation $[e, w_{i}] = m_{i}w_{i}$ gives $[E, \rho(w_{i})] = m_{i} \rho(w_{i})$, and this implies that $\rho(w_{i})$ is a vector field with polynomial coefficients. Second, taking the bracket of two constant sections $w_{i}$ and $w_{j}$, we get $[w_{i}, w_{j}] = \sum_{k} f_{ij}^{k} w_{k}$, where $f_{ij}^{k}$ are a priori holomorphic functions on $V$. Taking the bracket with $e$, we get on the one hand $[e, [w_{i}, w_{j}]] = (m_{i} + m_{j})[w_{i}, w_{j}]$, and on the other $[e, \sum_{k} f_{ij}^{k} w_{k}] = \sum_{k} (E(f_{ij}^{k}) + m_{k}f_{ij}^k) w_{k}$. Hence $f_{ij}^k$ has fixed $E$-degree $m_{i} + m_{j} - m_{k}$, and so must be a polynomial.
\end{proof}

To finish this section, we record the remaining non-zero brackets. They are given by 
\[
[f_{i}, w_{j}] = \sum_{k} \lambda_{ij}^{k} w_{k}, \qquad [w_{i}, w_{j}] = \sum_{k} \alpha_{ij}^{k}e_{k} + \sum_{k} \beta_{ij}^k f_{k} + \sum_{k} \gamma_{ij}^{k} w_{k},
\]
where $\lambda_{ij}^{k} \in \mathbb{C}$ are the structure constants for the $\mathfrak{s}$-representation on $W$, and $\alpha_{ij}^{k}, \beta_{ij}^{k},$ and $\gamma_{ij}^{k}$ are polynomial functions on $V$. Finally, let $Z_{i} = \rho(w_{i})$, which are vector fields with polynomial coefficients. 

\subsection{Classification of representations} \label{SectionClassofrep}
In this section, we fix a semisimple residue $F$, and give a functorial classification of $\Rep_{F}(\mathcal{G}, H)$, the full subcategory of representations whose semisimple residue is isomorphic to $F$.

Let $X_{F}$ be the set consisting of pairs $(\phi_{s}, U)$, where $\phi_{s} : \mathcal{G} \to Pair(V) \times H$ is a groupoid homomorphism such that $r^{*}\phi_{s} = p^{*}F$, and $U : \mathbb{Z}^k \to Aut(\phi_{s})$ is a homomorphism valued in unipotent automorphisms. Note that the condition on $\phi_{s}$ automatically implies that it has semisimple monodromy. Let $Aut(p^{*}F)$ denote the group of automorphisms of the representation $p^{*}F : \tilde{G} \ltimes V \to H$. There is an action of $Aut(p^*F)$ on $X_{F}$ given by conjugation. Namely, given $h \in Aut(p^*F)$ and $(\phi_{s}, U) \in X_{F}$, we define $h \ast (\phi_{s}, U) = (C_{h} \circ \phi_{s}, C_{h}(U))$, where 
\[
C_{h} \circ \phi_{s}(\gamma) = h(y) \phi_{s}(\gamma) h(x)^{-1}, \qquad C_{h}(U)_{n}(z) = h(z)U_{n}(z)h(z)^{-1}, 
\]
for $\gamma \in \mathcal{G}$ with $t(\gamma) = y$ and $s(\gamma) = x$, and $z \in V$. As a result, we may form an action groupoid \[ Aut(p^*F) \ltimes X_{F}.\] 

\begin{theorem} \label{MainClassificationTheorem}
Let $F : \tilde{G} \to H$ be a Lie group homomorphism with semisimple monodromy, let $\Rep_{F}(\mathcal{G}, H)$ be the full subcategory of representations whose semisimple residue is isomorphic to $F$, and let $Aut(p^*F) \ltimes X_{F}$ be the action groupoid described above. There is an equivalence of categories 
\[
Aut(p^*F) \ltimes X_{F} \cong \Rep_{F}(\mathcal{G}, H). 
\]
\end{theorem}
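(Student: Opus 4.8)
The plan is to factor the asserted equivalence through the Jordan--Chevalley isomorphism of Theorem \ref{JCdecomposition} and to use the linearization Theorem \ref{mainlinearizationthm} to rigidify the semisimple part. Let $J: \Rep(\mathcal{G},H) \xrightarrow{\sim} \mathcal{JC}_{H}$ be the isomorphism of Theorem \ref{JCdecomposition}; recall that it is the identity on morphisms and that the semisimple residue of $J^{-1}(P,\phi_{s},U)$ is $\iota^{*}r^{*}\phi_{s}$. Thus $J$ identifies $\Rep_{F}(\mathcal{G},H)$ with the full subcategory $\mathcal{JC}_{H}^{F}\subseteq\mathcal{JC}_{H}$ of triples $(P,\phi_{s},U)$ with $\iota^{*}r^{*}\phi_{s}\cong F$, and it suffices to construct an equivalence $\Phi: Aut(p^{*}F)\ltimes X_{F}\to\mathcal{JC}_{H}^{F}$.

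I would define $\Phi$ on objects by $(\phi_{s},U)\mapsto(V\times H,\phi_{s},U)$. This is a legitimate object of $\mathcal{JC}_{H}$: the condition $r^{*}\phi_{s}=p^{*}F$ forces the monodromy of $\phi_{s}$ (which is pulled back along $i\circ j = r\circ(\text{incl})\circ j$) to coincide with that of $F$, which is semisimple by hypothesis; and it lies in $\mathcal{JC}_{H}^{F}$ since $\iota^{*}r^{*}\phi_{s}=\iota^{*}p^{*}F=F$. On morphisms, an arrow $h:(\phi_{s},U)\to(\phi_{s}',U')$ of the action groupoid is an element $h\in Aut(p^{*}F)$ with $C_{h}\phi_{s}=\phi_{s}'$ and $C_{h}(U)=U'$; regarding $h$ as a gauge transformation $V\to H$, this is exactly a morphism of $\mathcal{JC}_{H}$ from $(V\times H,\phi_{s},U)$ to $(V\times H,\phi_{s}',U')$, and $\Phi$ preserves composition tautologically. (The action preserves $X_{F}$ because $r^{*}(C_{h}\phi_{s})=C_{h}(r^{*}\phi_{s})=C_{h}(p^{*}F)=p^{*}F$.)

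For essential surjectivity, take $(P,\phi_{s},U)\in\mathcal{JC}_{H}^{F}$ and trivialize $P\cong V\times H$ as in the Set-up. The representation $r^{*}\phi_{s}$ of $\tilde{G}\ltimes V$ has semisimple monodromy, namely that of $\phi_{s}$, so Theorem \ref{mainlinearizationthm} gives an isomorphism $L(r^{*}\phi_{s})\xrightarrow{\sim}r^{*}\phi_{s}$; composing with $p^{*}$ of a chosen isomorphism $\iota^{*}r^{*}\phi_{s}\xrightarrow{\sim}F$ of $\tilde{G}$-representations produces a gauge transformation $g:V\to H$ with $C_{g}(r^{*}\phi_{s})=p^{*}F$, i.e. $r^{*}(C_{g}\phi_{s})=p^{*}F$. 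Hence $(C_{g}\phi_{s},C_{g}(U))\in X_{F}$, and $g$ itself is an isomorphism $(V\times H,C_{g}\phi_{s},C_{g}(U))\xrightarrow{\sim}(P,\phi_{s},U)$ in $\mathcal{JC}_{H}$, so $\Phi$ hits $(P,\phi_{s},U)$ up to isomorphism.

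Finally, $\Phi$ is fully faithful: for $(\phi_{s},U),(\phi_{s}',U')\in X_{F}$, a morphism in $\mathcal{JC}_{H}$ between their images is a gauge transformation $\theta:V\to H$ with $C_{\theta}\phi_{s}=\phi_{s}'$ and $C_{\theta}(U)=U'$, and every such $\theta$ already lies in $Aut(p^{*}F)$ because $C_{\theta}(p^{*}F)=C_{\theta}(r^{*}\phi_{s})=r^{*}(C_{\theta}\phi_{s})=r^{*}\phi_{s}'=p^{*}F$; thus the morphism sets on both sides literally coincide, and $\Phi$ acts as the identity between them. Composing $\Phi$ with $J^{-1}$ then yields the equivalence $Aut(p^{*}F)\ltimes X_{F}\cong\Rep_{F}(\mathcal{G},H)$. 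I expect the step carrying the real content to be essential surjectivity: it is precisely there that the abstract datum of a semisimple residue isomorphic to $F$ must be upgraded to a genuine global gauge equivalence $r^{*}\phi_{s}=p^{*}F$, which is exactly what the global linearization theorem provides and which a purely local argument could not. Everything else is bookkeeping, resting on the single observation that conjugation commutes with the pullback $r^{*}$, so that $Aut(p^{*}F)$ accounts exactly for the remaining ambiguity.
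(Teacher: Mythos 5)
Your proposal is correct and follows essentially the same route as the paper: reduce via the Jordan--Chevalley isomorphism of Theorem \ref{JCdecomposition}, identify $Aut(p^*F)\ltimes X_F$ as a full subcategory using the fact that conjugation commutes with $r^*$, and prove essential surjectivity by linearizing $r^*\phi_s$ via Theorem \ref{mainlinearizationthm}. The only difference is expository (you make the fully faithful step and the choice of isomorphism $\iota^*r^*\phi_s\cong F$ explicit), not mathematical.
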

\begin{proof}
The set $X_{F}$ consists of the objects $(P, \phi_{s}, U) \in \mathcal{JC}_{H}$ with the properties that the underlying principal $H$-bundle is trivial and $r^*\phi_{s} = p^*F$. A morphism in the category with source object $(\phi_{s}, U)$ and target defined on the trivial bundle consists of a holomorphic map $h: V \to H$. The target of such a morphism is $(C_{h} \circ \phi_{s}, C_{h}(U))$. The target is contained in the set $X_{F}$ precisely when $r^*(C_{h} \circ \phi_{s}) = p^*F$. But 
\[
r^*(C_{h} \circ \phi_{s}) = C_{h} \circ (r^* \phi_{s}) = C_{h} \circ p^*F. 
\]
Hence, this is the condition that $h \in Aut(p^*F)$. We conclude that $Aut(p^*F) \ltimes X_{F}$ is isomorphic to a full subcategory of $\Rep_{F}(\mathcal{G}, H)$. 

To finish the proof, it remains for us to show that the inclusion of the subcategory is essentially surjective. Let $(P, \phi) \in \Rep_{F}(\mathcal{G}, H)$ and let $(P, \phi_{s}, U)$ be the corresponding object of $\mathcal{JC}_{H}$. Let $\psi = r^* \phi_{s} \in \Rep( \tilde{G} \ltimes V, H)$. Then $\iota^* \psi \cong F$, implying that $L(\psi) = p^* \iota^* \psi \cong p^* F$. The representation $\psi$ has semisimple monodromy, so by Theorem \ref{mainlinearizationthm} it can be linearized. Hence, it follows that $\psi \cong p^*F$. Choose such an isomorphism $f : (P, \psi) \to (V \times H, p^* F)$. Then $(P, \phi_{s}, U)$ is isomorphic, via $f$, to $(V \times H, C_{f} \circ \phi_{s}, C_{f}(U))$, and 
\[
r^{*}(C_{f} \circ \phi_{s}) = C_{f} \circ (r^* \phi_{s}) = C_{f} \circ \psi = p^* F.
\]
Hence $(C_{f} \circ \phi_{s}, C_{f}(U)) \in X_{F}$. 
\end{proof}

\subsubsection{Symmetries of trivial representations}  \label{studyingsymmetries}
In this section, we fix a semisimple residue $F: \tilde{G} \to H$, and study the group of automorphisms of the trivial representation 
\[
\phi = p^*F : \tilde{G} \ltimes V \to H. 
\]
The automorphism group is defined as follows: 
\[
Aut(\phi) = \{ h: V \to H \ | \ h(gv) \phi(g,v) = \phi(g,v)h(v) \text{ for all } (g,v) \in  \tilde{G} \ltimes V \}. 
\]
In the definition $h$ is assumed to be holomorphic, and the product on the group is the point-wise product of functions. The first thing to note is that because of the positivity assumption, the functions $h$ are actually algebraic. To see this, choose an embedding $H \subseteq GL(\mathbb{C}^M)$ and differentiate the identity $h(ze \ast v) = e^{zD} h(v) e^{-zD}$, which is satisfied by elements of $Aut(\phi)$, to obtain 
\[
E(h) = [D, h]_{c}. 
\]
This implies that the homogeneous components of $h$ are eigenvectors of the linear operator $ad_{D}$, with eigenvalues given by the $E$-degree. Since $ad_{D}$ has finitely many eigenvalues, the components of $h$ can have only finitely many $E$-degrees. Therefore, $h$ must be a polynomial, with a fixed maximal degree determined by $D$. In fact, this implies further that the automorphism group has the structure of an algebraic variety, and we record this as a Lemma. 

\begin{lemma}
The group $Aut(\phi)$ has the structure of an affine algebraic group. 
\end{lemma}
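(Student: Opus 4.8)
The plan is to realise $Aut(\phi)$ as a Zariski-closed subset of a finite-dimensional vector space, and then to check that the group operations are algebraic. Fix a faithful representation $H \subseteq GL(\mathbb{C}^M)$; since $H$ is reductive it is closed in $GL(\mathbb{C}^M)$, so it is cut out there by a finite collection of polynomials $q_{\alpha}$ in the matrix entries. By the discussion preceding the Lemma, every $h \in Aut(\phi)$ is a polynomial map $V \to \mathrm{Mat}_{M}(\mathbb{C})$ whose degree is bounded by a number $N$ depending only on $ad_{D}$; the same bound applies to the pointwise inverse $h^{-1}$, since $h^{-1}$ again lies in $Aut(\phi)$. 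Let $\mathcal{P}_{N}$ denote the (finite-dimensional) space of polynomial maps $V \to \mathrm{Mat}_{M}(\mathbb{C})$ of degree at most $N$, with linear coordinates the matrix coefficients.

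First I would cut out $Aut(\phi)$ inside $\mathcal{P}_{N}$. The equivariance condition $h(g v) = F(g)\, h(v)\, F(g)^{-1}$ is, for each fixed $g \in \tilde{G}$, a linear condition on $h$; moreover, since the $G$-action on $V$ is linear, $v \mapsto h(g v)$ again has degree at most $N$, so for each $g$ this defines a linear subspace of $\mathcal{P}_{N}$, and the intersection over all $g$ is still a linear subspace. The condition that $h$ be valued in $\overline{H}$ (Zariski closure in $\mathrm{Mat}_{M}(\mathbb{C})$) amounts to $q_{\alpha}(h(v)) = 0$ for all $v$ and all $\alpha$; expanding $q_{\alpha} \circ h$ as a polynomial in $v$, its coefficients are polynomials in the coordinates of $\mathcal{P}_{N}$, so this is a Zariski-closed condition.

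The one subtlety is invertibility: the closed conditions above only force $h(v) \in \overline{H}$, not $h(v) \in H$. To handle this I would instead parametrise $Aut(\phi)$ by pairs $(h, h') \in \mathcal{P}_{N} \times \mathcal{P}_{N}$ subject to the equivariance and $\overline{H}$-membership conditions on both $h$ and $h'$, together with $h(v) h'(v) = h'(v) h(v) = \mathrm{id}$ for all $v$ (again Zariski-closed, by comparing coefficients in $v$). If $h(v), h'(v) \in \overline{H}$ and $h(v) h'(v) = \mathrm{id}$, then $h(v)$ is invertible, hence lies in $\overline{H} \cap GL(\mathbb{C}^M) = H$, the last equality because $H$ is closed in $GL(\mathbb{C}^M)$. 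The projection $(h,h') \mapsto h$ is then a bijection onto $Aut(\phi)$ (with inverse $h \mapsto (h, h^{-1})$), so $Aut(\phi)$ inherits the structure of an affine algebraic variety.

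Finally I would verify that the operations are morphisms. Multiplication corresponds to $((h_{1},h_{1}'),(h_{2},h_{2}')) \mapsto (h_{1} h_{2},\, h_{2}' h_{1}')$; here $h_{1} h_{2} \in Aut(\phi)$, so it again has degree at most $N$, and its coefficients are bilinear in those of $h_{1}$ and $h_{2}$, so the map is algebraic. Inversion is $(h,h') \mapsto (h',h)$, visibly algebraic, and the identity is the constant map $v \mapsto \mathrm{id}_{\mathbb{C}^M}$, which lies in $Aut(\phi)$. The group axioms hold because $Aut(\phi)$ is a subgroup of the group of maps $V \to H$ under pointwise multiplication. I expect the point requiring the most care to be exactly the invertibility issue just described --- ensuring the naive closed conditions carve out maps into $H$ rather than into its closure --- which the device of passing to pairs $(h, h')$ resolves cleanly.
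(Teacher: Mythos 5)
Your proof is correct and follows essentially the same route as the paper, which records this lemma as a direct consequence of the preceding bounded-degree observation: elements of $Aut(\phi)$ live in the finite-dimensional space of polynomial maps of degree at most $N$, and the defining conditions are algebraic. The only detail the paper leaves implicit is pointwise invertibility, which you resolve cleanly with the pairs $(h,h^{-1})$ device; the embedding $Aut(\phi) \to GL_{M}(\mathbb{C}[V]/I^{N+1})$ used in the paper's proof of the next lemma would serve the same purpose.
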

The functor $\iota^*$ defines a homomorphism $Aut(\phi) \to Aut(F)$, sending $h \mapsto h(0)$, and the functor $p^*$ provides a splitting. Both of these maps are algebraic. Let $Aut_{0}(\phi)$ denote the kernel of $\iota^{*}$, consisting of automorphisms $h$ that satisfy $h(0) = id$. Then we have the following split short exact sequence of groups 
\begin{equation} \label{Mainexactsequence}
1 \to Aut_{0}(\phi) \to Aut(\phi) \to Aut(F) \to 1. 
\end{equation}

\begin{lemma}
The group $Aut_{0}(\phi)$ is unipotent. 
\end{lemma}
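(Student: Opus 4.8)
The plan is to exhibit $\Aut_0(\phi)$ as a unipotent algebraic group by producing a faithful algebraic representation in which every element acts unipotently, or — more elementarily — by constructing a central filtration with additive (hence unipotent) successive quotients. I would pursue the filtration approach, using the $E$-degree grading that already played the key role in the preceding lemma.

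First I would recall that by the argument just given, every $h \in \Aut(\phi)$ is a polynomial map $V \to H \subseteq GL(\mathbb{C}^M)$ whose $E$-degree components are $\ad_D$-eigenvectors, and in particular $h$ has bounded degree $N$, where $N$ depends only on $D$. Write $h = h_0 + h_{(1)} + h_{(2)} + \cdots + h_{(N)}$ where $h_{(j)}$ collects the terms of $E$-degree $j$ (as a $\gl(\mathbb{C}^M)$-valued polynomial); for $h \in \Aut_0(\phi)$ we have $h_0 = h(0) = \id$. The idea is that, since all nonzero $E$-degrees are strictly positive, multiplication of two such maps interacts with this decomposition in a lower-triangular fashion: if $h, k \in \Aut_0(\phi)$ then $(hk)_{(j)} = h_{(j)} + k_{(j)} + (\text{terms built from } h_{(a)}, k_{(b)} \text{ with } a,b < j,\ a+b = j)$. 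I would define, for $1 \le m \le N$, the subgroup
\[
\Aut_0(\phi)^{(m)} = \{ h \in \Aut_0(\phi) \mid h_{(j)} = 0 \text{ for all } 1 \le j < m \},
\]
so that $\Aut_0(\phi) = \Aut_0(\phi)^{(1)} \supseteq \Aut_0(\phi)^{(2)} \supseteq \cdots \supseteq \Aut_0(\phi)^{(N+1)} = \{\id\}$. Using the triangularity observation, each $\Aut_0(\phi)^{(m)}$ is a closed normal (indeed central modulo the next term) algebraic subgroup, and the map $h \mapsto h_{(m)}$ induces an injective algebraic homomorphism $\Aut_0(\phi)^{(m)} / \Aut_0(\phi)^{(m+1)} \hookrightarrow (\text{a finite-dimensional vector space of } \gl(\mathbb{C}^M)\text{-valued polynomials of } E\text{-degree } m)$ which is additive — i.e. the quotient is a vector group $\mathbb{G}_a^{r_m}$. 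A group with a finite filtration by closed normal subgroups with successive quotients isomorphic to vector groups is unipotent, which gives the claim.

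The main obstacle, and the step I would write out most carefully, is verifying the triangularity of the group multiplication and the additivity of the successive-quotient maps: I need to confirm that composing (pointwise multiplying) two automorphisms that are trivial through $E$-degree $m-1$ produces an automorphism whose degree-$m$ part is exactly the sum of the two degree-$m$ parts, with no correction terms — this is where positivity (all weights strictly positive, so $a + b = m$ with $a, b \ge 1$ forces $a, b < m$) is essential. A subtlety is that $h$ takes values in $H$, not merely in $GL(\mathbb{C}^M)$, so I should check that $\Aut_0(\phi)^{(m)}$ is genuinely a subgroup of $\Aut_0(\phi)$ (closure under inverse follows from the filtration structure: if $h \equiv \id$ through degree $m-1$ then so does $h^{-1}$, by the same triangular bookkeeping applied to $h h^{-1} = \id$) and that the quotient lands in a vector group rather than merely a subquotient of $GL$ — but the degree-$m$ component of an element of $H$-valued polynomials congruent to $\id$ lives in $\gl(\mathbb{C}^M)$ and adds under multiplication, so the image is an additive algebraic subgroup, hence a vector group. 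Alternatively, and perhaps more cleanly, I could simply note that the Zariski closure in $GL(\mathbb{C}^M)$ of the set of all values $\{h(v) : h \in \Aut_0(\phi),\ v \in V\}$ need not be unipotent, so the filtration argument genuinely is the right tool rather than a pointwise argument; I would flag this to avoid the tempting but false shortcut "each $h(v)$ is unipotent."
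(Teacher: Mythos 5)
Your filtration argument is correct, but it takes a genuinely different route from the paper. The paper's proof is representation-theoretic: using the uniform degree bound $N$, it embeds $Aut(\phi)$ into $GL_{M}(R)$ for the finite-dimensional ring $R = \mathbb{C}[V]/I^{N+1}$ (with $I$ the maximal ideal at the origin), and then views this as a faithful finite-dimensional representation on $R^{M}$; an element $h \in Aut_{0}(\phi)$ becomes $id + n$ with $n \in I\,End_{R}(R^{M})$, which is nilpotent because $I^{N+1} = 0$, so every element is unipotent in a single faithful representation. Your approach instead filters $Aut_{0}(\phi)$ by the $E$-degree of the lowest nonvanishing component and checks, via the triangularity of pointwise multiplication forced by strict positivity of the weights, that the successive quotients inject additively into finite-dimensional vector spaces, then invokes the fact that an iterated extension of vector groups is unipotent. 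Both are complete; the paper's jet-ring embedding is essentially the same truncation that makes your triangularity manifest, but packaged so that unipotence is read off element-by-element in one faithful representation (and the same embedding also serves the algebraicity statements), whereas your version is more elementary in avoiding the ring $R$ at the cost of using the structure-theoretic fact about extensions of unipotent groups and the classification of closed subgroups of $\mathbb{G}_a^{r}$ in characteristic zero. One side remark of yours is inaccurate, though harmless: the values $h(v)$ for $h \in Aut_{0}(\phi)$ \emph{are} in fact unipotent for every $v$, since $h(ze \ast v) = e^{zD}h(v)e^{-zD}$ with all weights positive forces the characteristic polynomial of $h(v)$ to equal that of $h(0) = id$ by letting $\mathrm{Re}(z) \to -\infty$; what is true, and what your caution correctly captures, is that pointwise unipotence of the values does not by itself yield unipotence of the group $Aut_{0}(\phi)$ without a faithful finite-dimensional representation such as the paper's $GL(R^{M})$.
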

\begin{proof}
We need to show that all the elements of $Aut_{0}(\phi)$ are unipotent, and this can be done using a faithful representation. Again, we make use of an embedding $H \subseteq GL_{M}(\mathbb{C})$, which allows us to view the elements of $Aut(\phi)$ as polynomial matrices of degree bounded by an integer $N$. Let $I$ denote the ideal of polynomials on $V$ that vanish at the origin, and consider the ring $R = \mathbb{C}[V]/I^{N+1}$. Because of the degree bound on elements of $Aut(\phi)$, there is an injective homomorphism $Aut(\phi) \to GL_{M}(R)$. Since the complex vector space underlying $R$ is finite dimensional, the same is true for the complex vector space underlying $R^{M}$. Hence the forgetful embedding $GL_{M}(R) \to GL(R^{M})$, obtained by viewing an $R$-linear map as a $\mathbb{C}$-linear map, defines a finite-dimensional faithful representation of $Aut(\phi)$. 

An element $h \in Aut_{0}(\phi)$ satisfies $h(0) = id$. Hence, as an element of $End_{R}(R^{M})$, it has the form $h = id + n$, where 
\[
n \in I End_{R}(R^{M}).
\]
The element $n$ is nilpotent, since $I^{N+1} = 0$ in $R$. This implies that $h$ is unipotent. 
 \end{proof}
 
 Now let's turn our attention to $Aut(F)$. This is a subgroup of $H$, defined to be the centralizer of $F(g)$ for all $g \in \tilde{G}$. Hence, it is given by the following intersection 
\[
Aut(F) = C_{H}(S_{1}) \cap ... \cap C_{H}(S_{k}) \cap C_{H}(F|_{S}),
\]
where $C_{H}(S_{i})$ is the centralizer of $S_{i} \in \mathfrak{h}$, and $C_{H}(F|_{S})$ is defined to be the subgroup of elements in $H$ that centralize $F(g)$ for all $g \in S$. 

\begin{proposition} \label{proofofreductive}
The group $Aut(F) \subseteq H$ is a (possibly disconnected) reductive subgroup. Therefore the split short exact sequence \ref{Mainexactsequence} defines a Levi decomposition of $Aut(\phi)$. 
\end{proposition}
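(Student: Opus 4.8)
The plan is to recognise $Aut(F)$ as the centraliser in $H$ of a connected reductive subgroup, and then invoke the standard fact that such centralisers are reductive. By definition $Aut(F)$ is the set of elements of $H$ commuting with $F(g)$ for every $g\in\tilde G$, that is, $Aut(F)=C_H(F(\tilde G))$; and since in an algebraic group the centraliser of a subset equals the centraliser of its Zariski closure, we may rewrite this as $Aut(F)=C_H(M)$ with $M:=\overline{F(\tilde G)}$. First I would check that $M$ is a connected reductive subgroup of $H$. Writing $\tilde G=\mathbb{C}^k\times S$, the closure $\overline{F(\mathbb{C}^k)}$ is a subtorus of $H$: it is the product of the Zariski closures of the one-parameter subgroups generated by the commuting semisimple elements $S_i=dF(e_i)$, each of which lies in a maximal torus. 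On the other factor, since $S$ is semisimple and simply connected the holomorphic homomorphism $F|_S\colon S\to H$ is automatically a morphism of algebraic groups, so $F(S)$ is closed and is a semisimple subgroup. As these two subgroups commute (the $S_i$ commute with the image of $\chi=dF|_{\mathfrak s}$), $M=\overline{F(\mathbb{C}^k)}\cdot F(S)$ is connected reductive, a central torus times a semisimple part.

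It then remains to prove the general statement that the centraliser $C_H(M)$ of a connected reductive subgroup $M$ of the connected reductive group $H$ is reductive; this is the heart of the matter and the step I expect to require care (it is also available in the literature as the assertion that the centraliser of a linearly reductive subgroup of a reductive group is reductive). I would argue via compact real forms. Pick a maximal compact subgroup $U_M\subseteq M$; it is Zariski dense in $M$, so $C_H(M)=C_H(U_M)=H^{U_M}$, where $U_M$ acts on $H$ by conjugation. By Mostow's theorem on compact transformation groups, the compact group $U_M$ stabilises some maximal compact subgroup $U\subseteq H$; since $H$ is connected reductive, $U$ is a compact real form, so $\mathfrak h=\mathfrak u\oplus i\mathfrak u$ with $\mathfrak u=\mathrm{Lie}(U)$, and this decomposition is $U_M$-stable because $\mathrm{Ad}(U_M)$ is complex-linear on $\mathfrak h$ and preserves $\mathfrak u$. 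Taking Lie algebras and then $U_M$-fixed points (legitimate since $U_M$ is compact) gives
\[
\mathrm{Lie}\big(C_H(M)\big)=\mathfrak h^{U_M}=\mathfrak u^{U_M}\oplus i\,\mathfrak u^{U_M}=\mathrm{Lie}\big(C_U(U_M)\big)\otimes_{\mathbb R}\mathbb{C}.
\]
Since $C_U(U_M)$ is a closed, hence compact, subgroup of $U$, the identity component of $C_H(M)$ is the complexification of the compact Lie group $C_U(U_M)^\circ$ and is therefore reductive; hence $Aut(F)=C_H(M)$ is reductive, and it may well be disconnected since $C_U(U_M)$ need not be connected.

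Finally, for the claim about the Levi decomposition, the earlier lemmas already give that $Aut_0(\phi)$ is a normal unipotent (hence connected) subgroup of the affine algebraic group $Aut(\phi)$, so $Aut_0(\phi)\subseteq R_u(Aut(\phi))$; conversely the surjection in \ref{Mainexactsequence} carries $R_u(Aut(\phi))$ into $R_u(Aut(F))$, which is trivial now that $Aut(F)$ is known to be reductive, so $R_u(Aut(\phi))\subseteq Aut_0(\phi)$. Thus $Aut_0(\phi)=R_u(Aut(\phi))$, and the splitting $p^{\ast}\colon Aut(F)\hookrightarrow Aut(\phi)$ exhibits $Aut(F)$ as a reductive complement to the unipotent radical — which is precisely a Levi decomposition of $Aut(\phi)$. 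The only genuinely substantive point is the reductivity of $C_H(M)$; everything else is bookkeeping with the structures established in the previous sections.
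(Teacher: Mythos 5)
Your route is genuinely different from the paper's. The paper never forms the closure of the image: it first shows $C=C_{H}(S_{1})\cap\dots\cap C_{H}(S_{k})$ is connected and reductive by treating the commuting semisimple elements one at a time (each $C_H(S_i)$ is the centralizer of the torus generated by $S_i$, hence connected reductive, and one inducts), and then proves that the identity component of $C_{C}(F|_{S})$ is reductive by embedding $C$ in some $GL(\mathbb{C}^{M})$ and checking, via complete reducibility of the $S$-action on $\mathrm{Lie}(C)$ and Schur's lemma, that the trace form stays nondegenerate on the fixed subalgebra. You instead package everything into the single connected reductive subgroup $M=\overline{F(\tilde{G})}$ (your verification that $M$ is the product of a subtorus with the closed semisimple image $F(S)$, and that $Aut(F)=C_H(M)$, is correct) and then prove the general statement that $C_H(M)$ is reductive via a compact real form. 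Most of that argument is also fine: $U_M$ is Zariski dense in $M$, so $C_H(M)=C_H(U_M)$; any maximal compact $U\supseteq U_M$ is automatically $U_M$-stable (you do not need Mostow's fixed-point theorem for this); and the identity $\mathrm{Lie}\bigl(C_H(M)\bigr)=\mathrm{Lie}\bigl(C_U(U_M)\bigr)\otimes_{\mathbb{R}}\mathbb{C}$ holds in characteristic zero. Your closing bookkeeping, identifying $Aut_{0}(\phi)$ with the unipotent radical of $Aut(\phi)$ so that the splitting of \ref{Mainexactsequence} is a Levi decomposition, is correct and in fact more detailed than the paper.

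The one genuine gap is the final inference: from the Lie algebra identity you conclude that $C_H(M)^{\circ}$ ``is the complexification of $C_U(U_M)^{\circ}$ and is therefore reductive.'' Reductivity of a complex algebraic group is not a Lie-algebra-level property: the group $\mathbb{C}^{*}\times\mathbb{G}_a$ has abelian, hence reductive, Lie algebra but is not reductive, so knowing that $\mathrm{Lie}(C_H(M))$ is the complexification of a compact Lie algebra does not by itself identify the group as a complexification of a compact group; one must still exclude a (necessarily central) unipotent radical. The claim you need is true, but requires an extra step, for example: $\mathrm{Lie}\bigl(R_u(C_H(M)^{\circ})\bigr)$ is a nilpotent ideal of $\mathfrak{k}\oplus i\mathfrak{k}$, where $\mathfrak{k}=\mathrm{Lie}(C_U(U_M))$ is of compact type, hence it lies in the centre $\mathfrak{z}(\mathfrak{k})\oplus i\,\mathfrak{z}(\mathfrak{k})$; but elements of $\mathfrak{z}(\mathfrak{k})$ exponentiate into a compact torus and are therefore semisimple in any faithful algebraic representation of $H$, so every element of this centre is semisimple and the nilpotent ideal must vanish. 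Alternatively, note that $C_H(U_M)$ is stable under the Cartan involution of $H$ fixing $U$ (because $U_M\subseteq U$) and invoke Mostow's criterion that a Zariski-closed self-adjoint subgroup of $GL_n(\mathbb{C})$ is reductive. With either patch your argument is complete and gives a clean alternative to the paper's two-stage proof.
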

\begin{proof}
Consider one of the groups $C_{H}(S_{i})$. Since $S_{i}$ is semisimple, this centralizer agrees with $C_{H}(T(S_{i}))$, where $T(S_{i}) \subset H$ is the torus generated by $S_{i}$. Hence, by Theorem 22.3 and Corollary 26.2A of \cite{humphreys2012linear}, this group is connected and reductive. Since all $S_{i}$ mutually commute, we may therefore inductively prove that $C := C_{H}(S_{1}) \cap ... \cap C_{H}(S_{k}) $ is connected and reductive. Furthermore, the image of $F$ is contained in $C$, and so 
\[
Aut(F) = C_{C}(F|_{S}),
\]
the centralizer of a semisimple subgroup of a reductive group. Let $K = C_{C}(F|_{S})^{o}$ be the connected component of the identity. We will show that it is a reductive group, following \cite{403246}.  In order to do this, we start by choosing a faithful representation $C \subset GL(\mathbb{C}^{M})$. We will make use of the fact that an algebraic subgroup of $GL(\mathbb{C}^{M})$ is reductive if and only if the restriction of the trace form to its Lie algebra is non-degenerate. Let $B$ denote the restriction of the trace form to $Lie(C)$, which is non-degenerate. Viewing $Lie(C)$ as a representation of $S$ (via the homomorphism $F$ and the adjoint representation), the form $B$ is $S$-invariant. Since $S$ is semisimple, $Lie(C)$ decomposes as a direct sum of irreducible subrepresentations: 
\[
Lie(C) = \oplus V_{i}.
\]
Viewed from this perspective, the Lie algebra $Lie(K)$ is the sum of all trivial sub-representations. By Schur's lemma, it follows that $Lie(K)$ is orthogonal to all non-trivial $V_{i}$. Hence, the restriction of $B$ to $Lie(K)$ must be non-degenerate, and so $K$ is reductive. 
\end{proof}

\subsubsection{Algebraic moduli stacks} \label{sectionalgebraicmodulistack}
In this section, we show that $Aut(p^*F) \ltimes X_{F}$ naturally has the structure of an algebraic action groupoid. We will do this by giving a description of $X_{F}$ in terms of flat algebroid connections. 

First, the representation $\phi_{s}$ corresponds, via Lie's theorem, to a Lie algebroid morphism $\nabla : A \to TV \oplus \mathfrak{h}$ whose restriction to $\mathfrak{g} \ltimes V$ is given by $dF$. Namely, the connection is given by $\nabla = d + \omega$, where $\omega$ satisfies $r^{*}\omega = dF$. To be more explicit, let $e^{1}, ..., e^{k}$ denote the dual basis of $(\mathbb{C}^k)^*$, and let $w^{1}, ..., w^{d}$ denote the dual basis of $W^*$. Then $\omega$ has the following form 
\begin{equation} \label{normalformone}
\omega =  \sum_{i = 1}^{k} e^{i} \otimes S_{i} + \chi + \sum_{j = 1}^{d} w^{j} \otimes B_{j},
\end{equation}
where $B_{j}: V \to \mathfrak{h}$ are holomorphic maps. 

The Maurer-Cartan equation for $\nabla$ is the condition that the curvature vanishes: $R_{\omega} = 0$. We start by first imposing a weaker condition. Namely, let $(r \otimes 1): \mathfrak{g} \otimes A \to \wedge^2 A$, and define the following two spaces 
\begin{align*}
W_{F}^{(1)} &= \{ \omega \in \Omega_{A}^1 \otimes \mathfrak{h} \ | \ r^{*}(\omega) = dF, \ \ (r \otimes 1)^{*} R_{\omega} = 0  \},  \\
U_{F} &= \{ B \in \Omega_{A}^1 \otimes \mathfrak{h} \ | \ r^{*}(B) = 0, \ \ (r \otimes 1)^{*} dB + [dF, B] = 0  \}.  
\end{align*}
Let $X_{F}^{(1)}$ denote the space of connections coming from $X_{F}$. It is the subvariety of $W_{F}^{(1)}$ given by forms $\omega$ with vanishing curvature. 
\begin{lemma} \label{Affine1}
The space $U_{F}$ is a finite dimensional vector space, and $W_{F}^{(1)}$ is a non-empty affine space modelled on $U_{F}$. Furthermore, the gauge action of $Aut(p^*F)$ preserves $W_{F}^{(1)}$ and it acts algebraically by affine transformations. 
\end{lemma}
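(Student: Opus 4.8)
The plan is to make $W_F^{(1)}$ and $U_F$ completely explicit via the normal form of Equation \ref{normalformone}, to observe that the curvature constraint becomes \emph{affine-linear} once we restrict it along $r\otimes 1$, and then to read off the three assertions in turn. By Equation \ref{normalformone}, every $\omega$ with $r^{*}\omega = dF$ is uniquely $\omega = \omega_{0} + B$, where $\omega_{0} = \sum_{i} e^{i}\otimes S_{i} + \chi$ is the fixed part and $B = \sum_{j} w^{j}\otimes B_{j}$ with $B_{j}\colon V\to\mathfrak{h}$ holomorphic; dropping the curvature condition, the same description parametrises $U_{F}$, since $r^{*}B = 0$ merely says that $B$ vanishes on the constant sections spanning $\mathfrak{g}=\mathbb{C}^{k}\oplus\mathfrak{s}$. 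Now decompose $R_{\omega_{0}+B} = R_{\omega_{0}} + d_{\omega_{0}}B + \tfrac12[B,B]$. One checks directly from the bracket relations at the end of Section \ref{StructureofLieAlg} (namely $\omega_{0}$ vanishes on $W$, $[\mathfrak{g},W]\subseteq W$, and the $S_{i}$ commute with one another and with the image of $\chi$) that $(r\otimes 1)^{*}R_{\omega_{0}} = 0$; the quadratic term also dies under $(r\otimes 1)^{*}$, because in $[B,B](dr(u),a)$ the first argument lies in $\mathfrak{g}\ltimes V$, where $B=0$. Hence $(r\otimes 1)^{*}R_{\omega_{0}+B} = (r\otimes 1)^{*}(d_{\omega_{0}}B) = (r\otimes 1)^{*}dB + [dF,B]$, which is precisely the linear operator appearing in the definition of $U_{F}$. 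Consequently $W_{F}^{(1)} = \omega_{0} + U_{F}$: it contains $\omega_{0}$, hence is non-empty, and it is an affine space modelled on the vector space $U_{F}$.

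For finite-dimensionality of $U_{F}$, I would single out the component of $(r\otimes 1)^{*}R_{\omega} = 0$ coming from the pair $(e,w_{j})$: using $[e,w_{j}] = m_{j}w_{j}$ and the sign conventions of the paper, it is an eigenvalue-type equation $E(B_{j}) = m_{j}B_{j} + [B_{j},D]$, of the kind exhibited in Equations \ref{inseki1}--\ref{inseki3}. Since $D$ is semisimple, $\mathrm{ad}_{D}$ acts semisimply on $\mathfrak{h}$; writing $\mathfrak{h}=\bigoplus_{\nu}\mathfrak{h}_{\nu}$ and $B_{j}=\sum_{\nu}B_{j}^{\nu}$ accordingly, the equation becomes $E(B_{j}^{\nu}) = (m_{j}+\nu)B_{j}^{\nu}$, forcing $B_{j}^{\nu}$ to be a polynomial of pure $E$-degree $m_{j}+\nu$. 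As all $E$-degrees are strictly positive, functions of a fixed $E$-degree form a finite-dimensional space, and there are only finitely many eigenvalues $\nu$; hence $U_{F}$ is finite-dimensional. The remaining components of $(r\otimes 1)^{*}R_{\omega}=0$ are likewise first-order linear equations with polynomial coefficients, so $U_{F}$ is cut out as an algebraic linear subspace.

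For the gauge action, an element $h\in Aut(p^{*}F)$ acts on connection forms by $\omega\mapsto\mathrm{Ad}_{h}(\omega) + (dh)h^{-1}$. This preserves $W_{F}^{(1)}$: the condition $r^{*}\omega = dF$ is stable because $r^{*}$ intertwines the gauge action, so $r^{*}(h\cdot\omega)$ is the $h$-gauge transform of the representation $p^{*}F$, which is $p^{*}F$ again by the definition of $Aut(p^{*}F)$; and the curvature condition is stable because $R_{h\cdot\omega} = \mathrm{Ad}_{h}(R_{\omega})$ while $\mathrm{Ad}_{h}$, acting pointwise on $\mathfrak{h}$, commutes with $(r\otimes 1)^{*}$. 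For algebraicity, I would invoke the Lemma of Section \ref{studyingsymmetries}: $h$, and hence its inverse (itself an automorphism), is a polynomial map $V\to H$ of bounded degree, so in the coordinates $(B_{j})$ on the finite-dimensional affine space $\omega_{0}+U_{F}$ the assignment $\omega\mapsto\mathrm{Ad}_{h}(\omega) + (dh)h^{-1}$ is given by polynomial formulas, affine in $B$ for fixed $h$ (linear part $\mathrm{Ad}_{h}$, constant part $(dh)h^{-1}$); since $Aut(p^{*}F)$ is an affine algebraic group and the whole picture is finite-dimensional, the action map is a morphism of varieties acting through affine transformations.

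The only genuine work is the first step: verifying that $(r\otimes 1)^{*}$ simultaneously kills $R_{\omega_{0}}$ and the quadratic term of the curvature, thereby turning the nonlinear equation $R_{\omega}=0$ into the affine equation defining $U_{F}$, together with the bracket/sign bookkeeping ($[\cdot,\cdot]_{c}$ versus the algebroid bracket) needed to read off the eigenvalue equations. Everything after that is formal.
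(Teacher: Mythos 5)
Your proposal is correct and follows essentially the same route as the paper: write $\omega = \omega_{0} + B$ with $\omega_{0}$ the form of Equation \ref{normalformone} with all $B_{j}=0$, note that $(r\otimes 1)^{*}$ kills both $R_{\omega_{0}}$ and the quadratic term so the restricted curvature condition becomes exactly the linear equation defining $U_{F}$, and extract finite-dimensionality from the component equation $E(B_{j}) = m_{j}B_{j} + [B_{j},D]$ together with positivity of the $E$-degrees. You are in fact somewhat more explicit than the paper (which leaves the quadratic-term cancellation and the gauge-action algebraicity essentially unargued), and the only discrepancy is an immaterial sign/indexing choice in the $\mathrm{ad}_{D}$-eigenvalue bookkeeping.
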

\begin{proof}
Since the equations defining $U_{F}$ are linear, it is clearly a vector space. To see that it is finite dimensional, we express an element in terms of the basis as $B = \sum_{i} w^{i} \otimes B_{i}$, and apply the curvature equation to elements of the form $e \otimes w_{i}$. This gives the following component equations 
\[
E(B_{i}) = m_{i}B_{i} + [B_{i}, D]. 
\]
Each $B_{i}$ is a holomorphic $\mathfrak{h}$-valued function on $V$, and its component of $E$-degree $n$ is an eigenvector for $ad_{D}$ with eigenvalue $n - m_{i}$. Since $ad_{D}$ has finitely many eigenvalues, $B_{i}$ can have non-zero components in only finitely many $E$-degrees. Therefore, it is a polynomial function with a fixed bounded degree determined by $E$ and $D$. It follows from this that $U_{F}$ is finite dimensional. 

It is also clear that $W_{F}^{(1)}$ is an affine space modelled on $U_{F}$. To see that it is non-empty, we observe that setting all $B_{j} = 0$ in Equation \ref{normalformone} gives us a point $\omega_{0} \in W_{F}^{(1)}$. Indeed, it follows from the Lie algebroid bracket relations established in Section \ref{StructureofLieAlg} that the curvature of $\omega_{0}$ is contained in $\wedge^{2}W^{*} \otimes \mathfrak{h}$. 
\end{proof}

The points of $W_{F}^{(1)}$ may be represented in an affine chart by vectors in $U_{F}$ of the form $\sum_{i} w^{i} \otimes B_{i}$. Here the $B_{i}$ are polynomials on $V$ valued in $\mathfrak{h}$ of fixed maximal degree. The components in each degree lie in certain subspaces of $\mathfrak{h}$, and there are linear relations satisfied among the different components and the different $B_{i}$. Hence, by choosing linear coordinates on these subspaces, we obtain affine coordinates on $W_{F}^{(1)}$. The subspace $X_{F}^{(1)}$ is an algebraic subvariety defined by the following system of quadratic equations: 
\begin{equation} \label{quadcurvatureeq}
[B_{i}, B_{j}] + Z_{i}(B_{j}) - Z_{j}(B_{i}) - \sum_{k} \gamma_{ij}^{k}B_{k} = \sum_{k} \alpha_{ij}^{k} S_{k} + \sum_{k} \beta_{ij}^{k} dF(f_{k}),
\end{equation}
where $1 \leq i < j \leq d$. 

Let's now turn to the second component of an element of $X_{F}$, namely the homomorphism $U: \mathbb{Z}^k \to Aut(\phi_{s})$ valued in unipotent automorphisms. This homomorphism may be specified by the $k$ pairwise commuting unipotent automorphisms $U_{e_{i}}$, or if we take logarithms, by $k$ pairwise commuting nilpotent infinitesimal automorphisms $N_{i} \in \mathfrak{aut}(\phi_{s})$. The condition of being an infinitesimal automorphism can be phrased in terms of the connection $\nabla = d + \omega$ as the following equation: 
\[
dN_{i} - [N_{i}, \omega] = 0,
\]
for all $i$. Again, we may analyse the space of such $N_{i}$ by first imposing a weakened condition. Namely, we define the following vector space 
\[
W_{F}^{(2)} = \{ (N_{1}, ..., N_{k}) \in \Omega_{A}^{0} \otimes \mathfrak{h} \ | \ r^* dN_{i} - [N_{i}, dF] = 0 \text{ for all } i \}. 
\]
\begin{lemma} \label{Affine2}
The vector space $W_{F}^{(2)}$ is finite dimensional. Furthermore, the adjoint action of $Aut(p^*F)$ preserves this space and acts algebraically. 
\end{lemma}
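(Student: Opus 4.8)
The plan is to adapt, almost verbatim, the argument used for Lemma \ref{Affine1}. First I would extract the relevant component equations. Restricting the defining condition $r^{*}dN_{i}-[N_{i},dF]=0$ along the constant section $e$ of $\mathfrak{g}\ltimes V$ (viewed inside $A$ via $dr$) gives, just as for the $B_{i}$ in Lemma \ref{Affine1} but \emph{without} the weight shift $m_{i}$ — since the $N_{i}$ are sections of $\Omega_{A}^{0}\otimes\mathfrak{h}$ rather than of $W^{*}\otimes\mathfrak{h}$ — the eigenvalue equation
\[
E(N_{i}) = [N_{i}, D],
\]
where $E=\rho(e)$ and $D=dF(e)$. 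Decomposing each holomorphic map $N_{i}:V\to\mathfrak{h}$ into its homogeneous components by $E$-degree, the component of $E$-degree $n$ is an eigenvector of $ad_{D}$ (with eigenvalue $\pm n$, depending on the sign convention). Since $ad_{D}$ is a linear operator on the finite dimensional Lie algebra $\mathfrak{h}$, it has finitely many eigenvalues, so $N_{i}$ is supported in finitely many $E$-degrees; as every $E$-degree on $V$ is strictly positive, a holomorphic function supported in finitely many $E$-degrees is a polynomial of a fixed bounded degree, and hence $W_{F}^{(2)}$ is finite dimensional.

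Next I would treat the $Aut(p^{*}F)$-action, which is pointwise conjugation $N_{i}\mapsto hN_{i}h^{-1}$ and is linear in $(N_{1},\dots,N_{k})$. To see that it preserves $W_{F}^{(2)}$, I would observe that $W_{F}^{(2)}$ is $k$ copies of the space $\mathfrak{aut}(p^{*}F)$ of infinitesimal automorphisms of the trivial representation $p^{*}F$ of $\tilde{G}\ltimes V$ (equivalently, $k$ copies of the Lie algebra of the algebraic group $Aut(p^{*}F)$): the condition $r^{*}dN_{i}-[N_{i},dF]=0$ says exactly that $N_{i}$ is an invariant section of $\mathfrak{aut}_{H}(V\times H)$ for the $(\tilde{G}\ltimes V)$-action induced by $p^{*}F$, as recalled at the end of Section \ref{Firstsection}. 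Since $h$ is an automorphism of $p^{*}F$, conjugation by $h$ intertwines this action with itself and therefore carries invariant sections to invariant sections; alternatively one differentiates the identity $h(gv)=F(g)h(v)F(g)^{-1}$ to obtain $E_{u}(h)h^{-1}=ad_{dF(u)}$ and checks the claim directly using the Leibniz rule.

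Finally, for algebraicity I would reuse the quotient-ring trick from the proof of Lemma \ref{Affine1} and the lemmas of Section \ref{studyingsymmetries}: fix a faithful embedding $H\subseteq GL_{M}(\mathbb{C})$, let $I\subset\mathbb{C}[V]$ be the ideal of functions vanishing at the origin, and let $R=\mathbb{C}[V]/I^{N+1}$ for $N$ larger than the common degree bound on the elements of $Aut(p^{*}F)$ and of $W_{F}^{(2)}$. Then $Aut(p^{*}F)$ embeds as an algebraic subgroup of $GL_{M}(R)$ and $W_{F}^{(2)}$ as a linear subspace of $M_{M}(R)$, and conjugation $(h,N)\mapsto hNh^{-1}$ is a polynomial map $GL_{M}(R)\times M_{M}(R)\to M_{M}(R)$ — inversion in $GL_{M}(R)$ being polynomial since $R$ is finite dimensional over $\mathbb{C}$ — which, by the previous paragraph, restricts to an algebraic (indeed linear) action on $W_{F}^{(2)}$.

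I expect the only real friction to be bookkeeping: pinning down the exact signs in the component equation under the paper's convention $[X,Y]_{c}=-[X,Y]$, and verifying carefully that conjugation by an automorphism preserves the defining linear condition. Neither point is deep — the finite dimensionality is an almost word-for-word repeat of the corresponding step in Lemma \ref{Affine1}, and the algebraicity is the same argument used repeatedly above.
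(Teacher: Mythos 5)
Your proposal is correct and follows essentially the same route as the paper: the paper's proof consists precisely of applying the defining equation to $e$ to get $E(N_j)=[N_j,D]$, deducing bounded polynomial degree and hence finite dimensionality, and dismissing the equivariance and algebraicity as routine, which you fill in with the same techniques the paper uses elsewhere (invariance under conjugation by automorphisms, and the truncated-ring embedding of Section \ref{studyingsymmetries}). The only blemish is the notational slip ``$E_{u}(h)h^{-1}=ad_{dF(u)}$'' (it should read $E_{u}(h)=[dF(u),h]_{c}$, as in the paper), but your primary argument via intertwining of invariant sections does not rely on it.
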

\begin{proof}
Applying the defining equation for $W_{F}^{(2)}$ to $e$ and $N_{j}$ gives the following component equation 
\[
E(N_{j}) = [N_{j}, D],
\]
which implies that $N_{j}$ must be a polynomial function with a fixed bounded degree determined by $E$ and $D$. Hence $W_{F}^{(2)}$ is finite dimensional. The rest of the proof then follows easily. 
\end{proof}

Let $W_{F} = W_{F}^{(1)} \times W_{F}^{(2)}$, which is a finite dimensional affine space. The set $X_{F}$ is the algebraic subvariety of $W_{F}$ defined by the Equations \ref{quadcurvatureeq} for $\omega$, as well as the following equations between $\omega$ and $N_{i}$: 
\begin{itemize}
\item $Z_{i}(N_{j}) = [N_{j}, B_{i}]$, for $1 \leq i \leq d$ and $1 \leq j \leq k$,
\item $[N_{i}, N_{j}] = 0$ for $1 \leq i < j \leq k$, 
\item $N_{i}$ is nilpotent for $1 \leq i \leq k$. 
\end{itemize}
Furthermore, Lemmas \ref{Affine1} and \ref{Affine2} imply that there is an algebraic action of $Aut(p^*F)$ on $W_{F}$ which preserves $X_{F}$. Therefore, the groupoid $Aut(p^*F) \ltimes X_{F}$ classifying flat connections is an action groupoid in the category of affine algebraic varieties. We summarize this result in the following theorem. 

\begin{theorem} \label{AlgebraicModuliStack}
Let $A$ be the Lie algebroid of a positive homogenous groupoid, and let $F: \tilde{G} \to H$ be a homomorphism with semisimple monodromy. Then the moduli space of $H$-representations of $A$ with semisimple residue in the adjoint orbit of $dF$ has a natural structure of an affine algebraic quotient stack $[X_{F}/Aut(p^*F)]$. 
\end{theorem}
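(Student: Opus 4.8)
The plan is to assemble the results of Sections~\ref{ReptheoryWHG} and~\ref{normalformtheorems} into a single statement about the moduli functor. First I would reduce to the groupoid picture: since $\mathcal{G}$ is the source-simply-connected integration of $A$, Lie's second theorem gives an equivalence $\Rep(\mathcal{G},H)\cong\Rep(A,H)$, and because the semisimple residue of a representation is extracted from the associated flat $A$-connection restricted to $\mathfrak{g}\ltimes V$ (via $dF$), this equivalence identifies the full subcategory of $A$-representations with semisimple residue in the adjoint orbit of $dF$ with $\Rep_{F}(\mathcal{G},H)$. Combining with Theorem~\ref{MainClassificationTheorem} we get an equivalence of categories $Aut(p^{*}F)\ltimes X_{F}\cong\Rep_{F}(A,H)$. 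So the remaining content is to promote the groupoid $Aut(p^{*}F)\ltimes X_{F}$ to an algebraic action groupoid and to see that the associated quotient stack represents the moduli problem.

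Second, I would verify that $X_{F}$ is an affine algebraic variety. By Lemmas~\ref{Affine1} and~\ref{Affine2}, $W_{F}=W_{F}^{(1)}\times W_{F}^{(2)}$ is a finite-dimensional affine space: the eigenvalue equations $E(B_{i})=m_{i}B_{i}+[B_{i},D]$ and $E(N_{j})=[N_{j},D]$ bound the $E$-degrees, hence the polynomial degrees, of the defining data. Inside $W_{F}$, the subset $X_{F}$ is cut out by the quadratic curvature equations (Equation~\ref{quadcurvatureeq}), the equations $Z_{i}(N_{j})=[N_{j},B_{i}]$ (which are polynomial, since the $Z_{i}$ have polynomial coefficients and the $B_{i}$, $N_{j}$ are polynomials of bounded degree), the commutators $[N_{i},N_{j}]=0$, and the nilpotency of each $N_{i}$. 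The last condition is Zariski-closed: fixing a faithful representation $\mathfrak{h}\subseteq\mathfrak{gl}_{M}$, it asserts that $N_{i}(v)^{M}=0$ identically in $v$, and each coefficient of this polynomial-in-$v$ matrix expression is a polynomial in the affine coordinates of $W_{F}^{(2)}$. Hence $X_{F}$ is a closed subvariety of the affine space $W_{F}$.

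Third, recall from Section~\ref{studyingsymmetries} that $Aut(p^{*}F)$ is an affine algebraic group (here positivity is used to bound the degrees of the automorphisms $h$), and that by Lemmas~\ref{Affine1} and~\ref{Affine2} its gauge action on $W_{F}^{(1)}$ and its adjoint action on $W_{F}^{(2)}$ are algebraic and jointly preserve the subvariety $X_{F}$. Thus $Aut(p^{*}F)\ltimes X_{F}$ is an action groupoid in the category of affine algebraic varieties, and the associated quotient stack $[X_{F}/Aut(p^{*}F)]$ is an algebraic stack; its groupoid of complex points is $Aut(p^{*}F)\ltimes X_{F}$, which by the first step is the moduli groupoid of $H$-representations of $A$ with semisimple residue in the adjoint orbit of $dF$.

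The step I expect to require the most care is the word \emph{natural}: upgrading the equivalence on complex points to an isomorphism of moduli stacks, i.e. an identification of the functor sending a test scheme $T$ to the groupoid of $T$-families of logarithmic flat connections with prescribed semisimple residue with the functor $T\mapsto[X_{F}/Aut(p^{*}F)](T)$. This amounts to checking that every construction used in the dictionary is base-independent: the Jordan--Chevalley splitting of Theorem~\ref{JCdecomposition} is produced by canonical polynomial operations (Chinese remainder, the logarithm on unipotents), the linearization of Theorem~\ref{mainlinearizationthm} can be run in families, and the normal form of Theorem~\ref{MainClassificationTheorem} is obtained algebraically once a trivialization of the fibre over the origin is fixed (which is exactly what reduces the automorphism ambiguity to the algebraic group $Aut(p^{*}F)$). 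Granting this uniformity, the theorem follows by assembling the three steps above.
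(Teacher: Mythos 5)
Your proposal follows essentially the same route as the paper: Theorem \ref{MainClassificationTheorem} identifies the moduli groupoid with $Aut(p^*F)\ltimes X_{F}$, and Lemmas \ref{Affine1} and \ref{Affine2} together with the explicit curvature, compatibility, commutativity and nilpotency equations exhibit this as an action groupoid in the category of affine algebraic varieties, whence the quotient stack $[X_{F}/Aut(p^*F)]$. Your additional concern about functoriality over test schemes goes beyond what the paper itself verifies (it works at the level of the classifying groupoid of complex points), so it is extra caution rather than a gap.
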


To finish this section, we apply Theorem \ref{JCdecomposition} to describe the flat connection associated to a point $( \{ B_{i} \}_{i = 1}^{d}, \{ N_{i} \}_{i = 1}^{k} ) \in X_{F}$: 
\[
\nabla_{B,N} = d + \sum_{i=1}^{k} e^{i} \otimes (S_{i} + N_{i}) + \chi + \sum_{j=1}^{d}w^{j} \otimes (B_{j} + \sum_{i=1}^{k}d\pi_{i}(w_{j}) N_{i}),
\]
where $\pi_{i}$ are the components of the map $\pi: \mathcal{G} \to \mathbb{C}^k$.

%\subsubsection{Riemann-Hilbert for reductive free divisors}

\subsection{Normal forms for flat algebroid connections} \label{normalformexplicitsection}
In this section, we apply Theorem \ref{MainClassificationTheorem} and the results of Section \ref{SectionClassofrep} to obtain normal form theorems for Lie algebroid representations in the setting of the examples from Sections \ref{algebraicgroupex} and \ref{logconnectionschapt}. In each case, we present a normal form theorem for flat connections with a given fixed semisimple residue $F$, which is itself unique up to the adjoint action in $H$. In other words, we write down a set of equations cutting out the algebraic variety $X_{F}$ of Section \ref{sectionalgebraicmodulistack} which parametrizes flat connections in normal form. The semisimple residue also determines an algebraic group $Aut(p^*F)$, as in Section \ref{studyingsymmetries}, and the normal form is uniquely determined up to the gauge action of this group. Hence, in these examples, we are in effect giving explicit equations for the algebraic moduli stacks of Theorem \ref{AlgebraicModuliStack}. 

\subsubsection{Plane curve singularities} 
In this section, we continue studying the homogeneous plane curve singularities $D \subset \mathbb{C}^2$ considered in Section \ref{planecurvesingularities}. Recall that the logarithmic tangent bundle $T_{\mathbb{C}^{2}}(- \log D)$ is generated by the following vector fields 
\[
E = p x \partial_{x} + q y \partial_{y}, \qquad V = -(\partial_{y}f) \partial_{x} + (\partial_{x} f) \partial_{y}.
\]
The dual \emph{logarithmic} $1$-forms are, respectively,
\[
\alpha = \frac{1}{n} d\log f, \qquad \beta = \frac{1}{nf}(pxdy - qydx).
\]
These forms generate the algebra of logarithmic differential forms $\Omega_{\mathbb{C}^2}^{\bullet}(\log D)$, and they satisfy $d \alpha = 0$ and $d\beta = (n - p - q) \beta \wedge \alpha. $ The connection $1$-form of a logarithmic connection is given by $\omega = A \otimes \alpha + B \otimes \beta,$ where $A, B : \mathbb{C}^2 \to \mathfrak{h}$ are holomorphic. The Maurer-Cartan equation is then given by 
\[
V(A) - E(B) + (n - p - q)B + [B,A] = 0. 
\]
We obtain the following normal form theorem. 

\begin{theorem}\label{planecurvesingnormalform}
Let $\nabla = d + \omega$ be a flat connection on $\mathbb{C}^2$ with logarithmic singularity along the homogeneous plane curve $D$. There is a gauge transformation which puts $\omega$ into the following normal form
\[
\omega = (S + N) \otimes \alpha + B \otimes \beta,
\]
where $S \in \mathfrak{h}$ is a semisimple element, $N : \mathbb{C}^2 \to \mathfrak{h}$ is a holomorphic family of nilpotent elements, and $B: \mathbb{C}^2 \to \mathfrak{h}$ is a holomorphic map. These data satisfy the following equations: 
\begin{align}
E(B) &= (n - p - q)B + [B, S] \label{firsteqpc} \\
E(N) &= [N,S] \label{secondeqpc} \\
V(N) &= [N, B]. \label{thirdeqpc}
\end{align}
The semisimple residue of this connection is given by $S$. 
\end{theorem}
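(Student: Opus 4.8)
The plan is to specialize the general classification of Theorem~\ref{MainClassificationTheorem} and the explicit description of Section~\ref{sectionalgebraicmodulistack} to the positive homogeneous groupoid $\Pi(\mathbb{C}^2,D)$ studied in Section~\ref{planecurvesingularities}. The first task is to match up the general set-up. The Lie algebroid is $A = T_{\mathbb{C}^2}(-\log D) \cong \mathfrak{g}_A \ltimes \mathbb{C}^2$; the reductive group is $G = \mathbb{C}^*$ (generated by $E$), so $k = 1$ and there is no semisimple group factor, whence $\chi = 0$ and there are no vector fields $Y_i$. The fibre $A_0$ is the $2$-dimensional isotropy algebra spanned by $E_0$ and $V_0$ (both $E$ and $V$ vanish at the origin), so in the decomposition $A_0 = \mathfrak{g} \oplus W$ of Section~\ref{StructureofLieAlg} one takes $\mathfrak{g} = \mathbb{C}E_0$ and $W = \mathbb{C}V_0$; thus $d = 1$, the single weight is $m_1 = n-p-q$ (read off from $[E,V] = (n-p-q)V$), the single vector field is $Z_1 = V$, and the coframe dual to $(E,V)$ is $(\alpha,\beta)$, so $e^1 = \alpha$ and $w^1 = \beta$. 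Fixing a semisimple residue $F : \tilde G = \mathbb{C} \to H$ amounts to fixing the semisimple element $S = dF(e) \in \mathfrak{h}$, which also plays the role of $D$ in Section~\ref{sectionalgebraicmodulistack}.

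Given an arbitrary flat connection with logarithmic singularity along $D$, it is a representation of $T_{\mathbb{C}^2}(-\log D)$, hence of the source-simply-connected groupoid $\Pi(\mathbb{C}^2,D)$; let $F$ be its semisimple residue, well-defined up to conjugation in $H$. By Theorem~\ref{MainClassificationTheorem} the connection is gauge equivalent to the connection $\nabla_{B,N}$ attached to a point of $X_F$ by the formula at the end of Section~\ref{sectionalgebraicmodulistack}, which here reads $\nabla_{B,N} = d + \alpha \otimes (S+N) + \beta \otimes \big(B + d\pi(V)\,N\big)$. By the description of the derivative of $\pi$ given in Section~\ref{planecurvesingularities} (it sends $E$ to $1$ and $V$ to $0$), we have $d\pi(V) = 0$, so the normal form is exactly $\omega = (S+N)\otimes\alpha + B\otimes\beta$, with $S$ semisimple, $N$ a holomorphic family of nilpotent elements (nilpotency being part of the definition of $X_F$), and $B$ holomorphic; the semisimple residue is $S$ by construction. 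The equations cutting out $X_F$ then translate directly: since $k = d = 1$, both the quadratic curvature equations~\ref{quadcurvatureeq} (indexed by $1 \le i < j \le d$) and the condition $[N_i,N_j] = 0$ (indexed by $1 \le i < j \le k$) are vacuous; the condition $\omega \in W_F^{(1)}$ applied to $e \otimes w_1$, exactly as in the proof of Lemma~\ref{Affine1}, gives $E(B) = m_1 B + [B, D]$, i.e. Equation~\ref{firsteqpc}; the infinitesimal-automorphism condition for $N$ applied to $e$, as in the proof of Lemma~\ref{Affine2}, gives $E(N) = [N, D]$, i.e. Equation~\ref{secondeqpc}; and the $X_F$-equation $Z_1(N) = [N, B_1]$ is precisely Equation~\ref{thirdeqpc}.

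As a consistency check one verifies directly that $\omega = (S+N)\otimes\alpha + B\otimes\beta$ satisfies the Maurer--Cartan equation $V(S+N) - E(B) + (n-p-q)B + [B,S+N] = 0$ recorded before the theorem: Equation~\ref{firsteqpc} cancels all terms not involving $N$, leaving $V(N) + [B,N] = 0$, which is Equation~\ref{thirdeqpc}. I do not expect any serious obstacle: once the machinery of Sections~\ref{SectionClassofrep}--\ref{sectionalgebraicmodulistack} is in place, the content is bookkeeping — correctly identifying $k$, $d$, the weight $m_1$, and the coframe, observing the vanishing $d\pi(V) = 0$ (which is what makes the $\beta$-component of the normal form simply $B$, in contrast with the shifted coefficients in the Sekiguchi example of Theorem~\ref{introtheorem}), and tracking the sign convention $[\cdot,\cdot]_c = -[\cdot,\cdot]$ while substituting $D = S$ into the abstract identities. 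The only mildly non-obvious point worth stating explicitly is that the quadratic curvature constraints impose nothing here, because $W$ is one-dimensional.
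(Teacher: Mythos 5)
Your proposal is correct and follows exactly the route the paper intends: Theorem \ref{planecurvesingnormalform} is stated as a direct specialization of Theorem \ref{MainClassificationTheorem} and the explicit description of $X_F$ and $\nabla_{B,N}$ in Section \ref{sectionalgebraicmodulistack}, and your bookkeeping ($k=d=1$, $\chi=0$, $m_1=n-p-q$, $Z_1=V$, $D=S$, $d\pi(V)=0$ so no correction term on the $\beta$-component) matches that derivation, including the vacuousness of the quadratic curvature constraints and the final Maurer--Cartan consistency check.
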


\begin{remark}
In \cite{Narvaez1} the authors classify rank $2$ flat connections on $\mathbb{C}^2$ with regular singularities along the plane curves $x^p - y^q = 0$. The normal forms they obtain are logarithmic flat connections of the form given by Theorem \ref{planecurvesingnormalform}. It would be interesting to compare the classifications of logarithmic and regular singularities in this case. 
\end{remark}

\subsubsection{Algebraic group representations and linear free divisors}
We now turn to the examples studied in Section \ref{algebraicgroupex}, where the positive homogeneous groupoids have the form $\tilde{G} \ltimes V$, where 
\[
\tilde{G} = U \rtimes (\mathbb{C}^k \times S)
\]
is the universal cover of an algebraic group $G$ acting linearly on the vector space $V$. Recall that these groupoids also encompass the linear free divisors of Section \ref{reductivefreedivisors}. The corresponding Lie algebroid is an action algebroid of the form $A = \mathfrak{g} \ltimes V,$ where $\mathfrak{g} = \mathfrak{n} \rtimes (\mathbb{C}^k \oplus \mathfrak{s})$ and where $\mathfrak{n} = Lie(U)$ is a nilpotent Lie algebra, $\mathbb{C}^k = Lie(\mathbb{C}^k)$ is an abelian Lie algebra, and $\mathfrak{s} = Lie(S)$ is a semisimple Lie algebra. The complex of Lie algebroid differential forms is given by 
\[
\Omega_{A}^{\bullet} = \mathcal{O}_{V} \otimes \wedge^{\bullet} \mathfrak{g}^{*}
\]
with differential given by the sum $d = d_{\rho} + d_{CE}$, where $d_{CE}$ is the Chevalley-Eilenberg differential of the Lie algebra $\mathfrak{g}$, and $d_{\rho}$ is defined by the action, which is encoded by the anchor map $\rho : \mathfrak{g} \ltimes V \to TV$. More precisely, $\Omega_{A}^{\bullet}$ is generated by the holomorphic functions on $V$, $f \in \mathcal{O}_{V}$, and the constant elements $\alpha \in \mathfrak{g}^{*}$. The differentials of these forms are given by 
\[
df = d_{\rho}(f) = \rho^{*}(d_{dR}f), \qquad d \alpha = d_{CE}(\alpha),
\]
where $d_{dR}$ is the de Rham differential. Because of the decomposition $\mathfrak{g} = \mathfrak{n} \oplus \mathbb{C}^{k} \oplus \mathfrak{s}$, the differential $d$ decomposes into three components $d = d_{n} + d_{a} + d_{s}$. 

The connection $1$-form of an $A$-connection is given by $\omega = \omega_{n} + \omega_{a} + \omega_{s}$, where
\[
\omega_{n} \in \mathcal{O}_{V} \otimes \mathfrak{n}^{*} \otimes \mathfrak{h}, \ \ \omega_{a} \in \mathcal{O}_{V} \otimes (\mathbb{C}^k)^{*} \otimes \mathfrak{h}, \ \ \omega_{s} \in \mathcal{O}_{V} \otimes \mathfrak{s}^{*} \otimes \mathfrak{h}.
\]
The Maurer-Cartan equation then decomposes into $6$ coupled equations, according to the decomposition of $\wedge^2 \mathfrak{g}^*$ : 
\begin{align*}
d_{\rho, n} \omega_{n} + d_{CE, n}\omega_{n} + \frac{1}{2}[\omega_{n}, \omega_{n}] & = 0 \in \mathcal{O}_{V} \otimes \wedge^2 \mathfrak{n}^* \otimes \mathfrak{h} \\
d_{\rho, a} \omega_{a} +  \frac{1}{2}[\omega_{a}, \omega_{a}] & = 0  \in \mathcal{O}_{V} \otimes \wedge^2 (\mathbb{C}^k)^* \otimes \mathfrak{h} \\
d_{\rho, s} \omega_{s} + d_{CE}\omega_{s} + \frac{1}{2}[\omega_{s}, \omega_{s}] & = 0 \in \mathcal{O}_{V} \otimes \wedge^2 \mathfrak{s}^* \otimes \mathfrak{h} \\
d_{\rho,a} \omega_{n} + d_{\rho, n} \omega_{a} + d_{CE, a} \omega_{n} + [\omega_{n}, \omega_{a}] &= 0 \in \mathcal{O}_{V} \otimes (\mathfrak{n}^* \wedge (\mathbb{C}^k)^*) \otimes \mathfrak{h} \\
d_{\rho, s} \omega_{n} + d_{\rho, n} \omega_{s} + d_{CE, s} \omega_{n} + [\omega_{n}, \omega_{s}] &= 0 \in \mathcal{O}_{V} \otimes (\mathfrak{n}^* \wedge \mathfrak{s}^*) \otimes \mathfrak{h} \\
d_{\rho, s} \omega_{a} + d_{\rho, a}\omega_{s} + [\omega_{s}, \omega_{a}] &= 0 \in \mathcal{O}_{V} \otimes (\mathfrak{s}^* \wedge (\mathbb{C}^k)^*) \otimes \mathfrak{h}
\end{align*}

We obtain a normal form for these flat connections. In order to simplify the description, let $e_{i} \in \mathbb{C}^k$ denote the standard basis, with dual basis $e^{i} \in (\mathbb{C}^k)^{*}$, and let $E_{i} = \rho(e_{i}) \in \mathfrak{X}(V)$ denote the vector fields on $V$. 

\begin{theorem} \label{linearalgrepnormalform}
Let $\nabla = d + \omega$ be a flat $A$-connection. There is a gauge transformation which puts $\omega$ into the following normal form 
\[
\omega = \omega_{n} + \sum_{i = 1}^{k} e^{i} \otimes (S_{i} + N_{i}) + \chi,
\] 
where $\chi \in \mathfrak{s}^* \otimes \mathfrak{h}$ represents a Lie algebra homomorphism, $\omega_{n} \in \mathcal{O}_{V} \otimes \mathfrak{n}^* \otimes \mathfrak{h}$, $S_{i} \in \mathfrak{h}$ are semisimple elements, and $N_{i} \in \mathcal{O}_{V} \otimes \mathfrak{h}$ are holomorphically varying nilpotent elements. These data satisfy the following equations
\begin{enumerate}
\item the $S_{i}$ are mutually commuting, \label{algrepeq1}
\item the $S_{i}$ centralize the image of $\chi$, \label{algrepeq2}
\item $ d_{n} \omega_{n} + \frac{1}{2}[\omega_{n}, \omega_{n}] = 0$, \label{algrepeq3}
\item $ d_{s} \omega_{n} + [\omega_{n}, \chi] = 0$, \label{algrepeq4}
\item $E_{i}(\omega_{n}) = \omega_{n} \circ ad_{e_{i}} - ad_{S_{i}} \circ \omega_{n}$ for all $i = 1, ..., k$, where $E_{i} = \rho(e_{i}) \in TV$, \label{algrepeq5}
\item $E_{j}(N_{i}) = [N_{i}, S_{j}]$ for all $i,j = 1, ..., k$, \label{algrepeq6}
\item $d_{n}N_{i} = ad_{N_{i}} \circ \omega_{n} $ for all $i = 1,..., k$, \label{algrepeq7}
\item $d_{s}N_{i} = ad_{N_{i}} \circ \chi $ for all $i = 1,..., k$, \label{algrepeq8}
\item the $N_{i}$ are mutually commuting, \label{algrepeq9}
\end{enumerate}
The semisimple residue of this connection is given by $\chi$ and the elements $S_{i}$. 
\end{theorem}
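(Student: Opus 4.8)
The plan is to deduce this from the general classification, Theorem \ref{MainClassificationTheorem}, together with the explicit description of the variety $X_F$ given in Section \ref{sectionalgebraicmodulistack}, by specializing everything to the present action algebroid. First I would note that $\tilde G\ltimes V$, with $\tilde G = U\rtimes(\mathbb C^k\times S)$ the universal cover of $G$, is a positive homogeneous groupoid of exactly the type treated in Section \ref{algebraicgroupex}: it is source simply connected, it carries the morphisms $i:\mathbb C^k\ltimes V\to\tilde G\ltimes V$ and $\pi:\tilde G\ltimes V\to\mathbb C^k$, and the inclusion of the Levi factor $\mathbb C^k\times S\hookrightarrow\tilde G$ induces the homomorphism $r:(\mathbb C^k\times S)\ltimes V\to\tilde G\ltimes V$ required by the set-up of Section \ref{normalformtheorems}, with reductive group $(\mathbb C^*)^k\times S$. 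By Lie's second theorem, flat $A$-connections coincide with $H$-representations of this groupoid; by Theorem \ref{JCdecomposition} and Lemma \ref{JCdeclemma} every such representation has a well-defined semisimple residue up to conjugation in $H$, which we fix and call $F:\tilde G\to H$, with $dF(e_i) = S_i$ and $dF|_{\mathfrak s} = \chi$.

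The key identification is that the complement $W$ of the general set-up is here the nilradical $\mathfrak n$: indeed $A_0 = \mathfrak g$ (the full Lie algebra of the statement) decomposes as a $G$-representation into $(\mathbb C^k\oplus\mathfrak s)\oplus\mathfrak n$. Since $A$ is already an action algebroid, the adapted linearization of Lemma \ref{adaptedlinearization} may be taken to be the identity, and the normal forms of Section \ref{StructureofLieAlg} degenerate in a controlled way: choosing a weight basis $\{w_i\}$ of $\mathfrak n$, the anchors $Z_i = \rho(w_i)$ are the (linear) fundamental vector fields of the $G$-action, the bracket $[w_i,w_j]$ lies again in the ideal $\mathfrak n$ with \emph{constant} structure constants $\gamma_{ij}^k$, and the coefficients denoted $\alpha_{ij}^k,\beta_{ij}^k$ in Section \ref{sectionalgebraicmodulistack} all vanish. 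Moreover $\pi$ annihilates $\mathfrak n$, so $d\pi|_{\mathfrak n} = 0$ and the general normal form $\nabla_{B,N}$ acquires no cross terms: it is precisely $\nabla = d + \omega_n + \sum_i e^i\otimes(S_i+N_i) + \chi$, where $\omega_n = \sum_j w^j\otimes B_j \in \mathcal O_V\otimes\mathfrak n^*\otimes\mathfrak h$. Thus Theorem \ref{MainClassificationTheorem} gives the existence of the gauge transformation bringing $\omega$ into the stated form.

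It then remains to transcribe the defining equations of $X_F$ into the stated form. Equations \ref{algrepeq1}, \ref{algrepeq2} and the homomorphism property of $\chi$ simply record that $dF$ is a Lie algebra homomorphism, hence hold automatically. The condition $\omega\in W_F^{(1)}$, i.e. $(r\otimes 1)^* R_\omega = 0$, decomposes along $\wedge^2\mathfrak g^* = \wedge^2\mathfrak n^*\,\oplus\,(\mathfrak n^*\wedge\mathfrak s^*)\,\oplus\,(\mathfrak n^*\wedge(\mathbb C^k)^*)\,\oplus\,\wedge^2(\mathbb C^k\oplus\mathfrak s)^*$; the last summand contributes nothing new (it is subsumed in $F$ being a homomorphism), the $\mathfrak n^*\wedge\mathfrak s^*$ and $\mathfrak n^*\wedge(\mathbb C^k)^*$ pieces are equations \ref{algrepeq4} and \ref{algrepeq5} (the latter being the multi-parameter version of the eigenvalue equation appearing in the proof of Lemma \ref{Affine1}), and the vanishing of the remaining $\wedge^2\mathfrak n^*$-component of the curvature is the quadratic system Equation \ref{quadcurvatureeq} with $\alpha = \beta = 0$, namely equation \ref{algrepeq3}. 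Likewise, writing $N_i = \log U_{e_i}$ for the $k$ commuting nilpotent infinitesimal automorphisms, the defining equation of $W_F^{(2)}$ and the $\mathfrak n$-directed bullet of $X_F$ translate into equations \ref{algrepeq6}, \ref{algrepeq8} and \ref{algrepeq7}, the commutativity of the $U_{e_i}$ into \ref{algrepeq9}, and nilpotency of the $N_i$ is part of the data of a point of $X_F$; finally the semisimple residue is $\iota^* r^*$ of the semisimple part, which is $dF$, i.e. the $S_i$ and $\chi$. The work is essentially bookkeeping --- the two substantive inputs, Jordan--Chevalley decomposition and linearization, are already supplied by Theorems \ref{JCdecomposition} and \ref{mainlinearizationthm}; the only genuine care needed is with the clash of notation for $\mathfrak g$ between this section and Section \ref{normalformtheorems}, and with checking that the structure normal forms of Section \ref{StructureofLieAlg} really collapse as claimed for an action algebroid, neither of which presents an analytic difficulty.
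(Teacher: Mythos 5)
Your proposal is correct and follows essentially the same route as the paper, which presents Theorem \ref{linearalgrepnormalform} precisely as an application of Theorem \ref{MainClassificationTheorem} and the description of $X_F$ and $\nabla_{B,N}$ in Section \ref{sectionalgebraicmodulistack}, specialized to the action algebroid $\mathfrak{g}\ltimes V$ with $W=\mathfrak{n}$, constant structure functions, and $d\pi|_{\mathfrak{n}}=0$. Your bookkeeping matching the defining equations of $X_F$ to equations \ref{algrepeq1}--\ref{algrepeq9} is the intended argument.
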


For the remainder of this section, we will apply Theorem \ref{linearalgrepnormalform} to the examples of linear free divisors discussed in Section \ref{reductivefreedivisors}. 

\begin{example}[$A_{2}$]
A representation of the action groupoid $\mathbb{C} \ltimes \mathbb{C}$ corresponds to an ordinary differential equation on $\mathbb{C}$  with a Fuchsian singularity at the origin: 
\[
z \frac{ds}{dz} = A(z) s(z). 
\]
In this setting, Theorem \ref{linearalgrepnormalform} states that there is a gauge transformation which puts $A(z)$ into the form $A(z) = S + \sum_{i \geq 0} z^{i}N_{i}$, where $S$ is semisimple, and $N_{i}$ are nilpotent and satisfy $iN_{i} = [N_{i}, S]$. The semisimple residue here is given by $S$, which can be decomposed into its real and imaginary components $S = a + ib$. This normal form is unique up to the gauge action of the group $Aut(p^*S)$. This group was computed in \cite[Section 8.5]{babbitt1983formal} and \cite[Proposition 3.4]{bischoff2020lie}. It is given by the intersection $P(a) \cap C_{H}(\exp(2 \pi i S))$, where $P(a)$ is the parabolic subgroup of $H$ determined by $a$, and $C_{H}(\exp(2 \pi i S))$ is the centralizer of the element $\exp(2 \pi i S) \in H$. This result is well-known in the literature, and various versions appear in \cite{hukuhara1937proprietes, MR5229, levelt1975jordan, turrittin1955convergent, babbitt1983formal, kleptsyn2004analytic, boalch2011riemann, bischoff2020lie}. 
\end{example}

\begin{example}[$A_{k+1}$]
Consider the groupoid described in Example \ref{Aquiver}
\[
\mathbb{C}^k \ltimes \mathbb{C}^k = \Pi(\mathbb{C}^k, D),
\] 
where $D = \{ (z_{1}., ..., z_{k}) \in \mathbb{C}^k \ | \ z_{1} ... z_{k} = 0 \}$ is a normal crossing divisor. Its Lie algebroid is the logarithmic tangent bundle $T_{\mathbb{C}^k}(-\log D)$, which has a global frame given by the vector fields $z_{i} \partial_{z_{i}}$. The dual logarithmic $1$-forms are given by $\frac{dz_{i}}{z_{i}}$, and they generate the complex of logarithmic differential forms. Representations of $T_{\mathbb{C}^k}(-\log D)$ are flat connections on $\mathbb{C}^k$ which have logarithmic singularities along $D$. Applying Theorem \ref{linearalgrepnormalform} we obtain the following normal form for these connections: 
\[
\nabla = d + \sum_{i = 1}^{k} (S_{i} + N_{i}(z))\frac{dz_{i}}{z_{i}},
\]
where $S_{i}$ are pairwise commuting semisimple elements, and 
\[
N_{i} = \sum_{l_{1}, ..., l_{k} \geq 0} z_{1}^{l_{1}}...z_{k}^{l_{k}} N_{i, l_{1}, ..., l_{k}},
\]
with $N_{i, l_{1}, ..., l_{k}}$ nilpotent and satisfying 
\[
[N_{i, l_{1}, ..., l_{k}}, S_{j}] = l_{j} N_{i, l_{1}, ..., l_{k}},
\]
for all $i, l_{1}, ..., l_{k}, j$, and 
\[
\sum_{l_{1} + t_{1} = r_{1}, ..., l_{k} + t_{k} = r_{k}} [N_{i, l_{1}, ..., l_{k}}, N_{j, t_{1}, ..., t_{k}}] = 0,
\]
for all $i,j, r_{1}, ..., r_{k}$. This normal form is unique up to the gauge group of the semisimple residue. Let $S_{i} = a_{i} + ib_{i}$ be the decomposition into real and imaginary parts. Then the gauge group is given by the intersection
\[
P(a_{1}) \cap ... \cap P(a_{n}) \cap C_{H}(\exp(2 \pi i S_{1}), ..., \exp(2 \pi i S_{k})),
\]
where $P(a_{i})$ is the parabolic subgroup of $H$ determined by $a_{i}$, and $C_{H}(\exp(2 \pi i S_{1}), ..., \exp(2 \pi i S_{k}))$ is the centralizer in $H$ of the collection of elements $\exp(2 \pi i S_{i})$. This normal form has appeared in \cite{MR382766, MR436189, MR2085816}.
\end{example}

\begin{example}[$G_{2}$]
Consider the groupoid of Example \ref{G2example}
\[
 (\mathbb{C} \times SL(2,\mathbb{C})) \ltimes S^3(\mathbb{C}^2) = \Pi(\mathbb{C}^4, D),
\]
where $D$ is the vanishing locus of $f(x,y,z,w) = 27w^2x^2 - 18 wxyz + 4wy^3 + 4xz^3 - y^2z^2.$ Its Lie algebroid is the logarithmic tangent bundle $T_{\mathbb{C}^4}(- \log D)$ which has global frame: 
\begin{align*}
 E & = 3(x\partial_{x} + y\partial_{y} + z\partial_{z} + w\partial_{w}), \\
    V_{h} &= 3x\partial_{x} + y\partial_{y} - z\partial_{z} - 3 w\partial_{w}, \\
    V_{f} &= 3x \partial_{y} + 2y \partial_{z} + z \partial_{w}, \\
    V_{e} &= y\partial_{x} + 2z \partial_{y} + 3w\partial_{z}.
\end{align*}
The dual logarithmic $1$-forms are given as follows: 
\begin{align*}
 \alpha_{E} &= \frac{1}{12}d\log(f) \\
    \alpha_{h} &= \frac{1}{2f} \big( (9w^2x - 7wyz + 2z^3)dx + (2wy^2 - yz^2 + 3wxz)dy + (y^2z - 3wxy - 2xz^2)dz \\
    & \qquad \qquad \qquad \qquad \qquad \qquad \qquad \qquad \qquad \qquad \qquad \qquad \qquad + (-2y^3 + 7xyz -9wx^2)dw \big)\\
    \alpha_{f} &= \frac{1}{f} \big( 2(wz^2 - 3w^2y)dx + (9w^2x - wyz)dy + 2(wy^2 - 3wxz)dz + (4xz^2 - 3wxy - y^2z)dw   \big) \\
    \alpha_{e} &= \frac{1}{f} \big((4wy^2 - 3wxz - yz^2)dx + 2(xz^2 - 3wxy)dy + (9wx^2 - xyz)dz + 2(xy^2 - 3x^2z)dw   \big).
\end{align*}
These forms satisfy 
\[
d\alpha_{E} = 0, \ d\alpha_{h} = \alpha_{e} \wedge \alpha_{f}, \ d\alpha_{e} = 2 \alpha_{h} \wedge \alpha_{e}, \ d\alpha_{f} = -2 \alpha_{h} \wedge \alpha_{f}. 
\]
Applying Theorem \ref{linearalgrepnormalform}, we obtain the following normal form for flat connections on $\mathbb{C}^4$ which have logarithmic singularities along $D$: 
\[
\nabla = d + (S + N)\otimes \alpha_{E} + A \otimes \alpha_{h} + B \otimes \alpha_{f} + C \otimes \alpha_{e}, 
\]
where $A, B, C \in \mathfrak{h}$ satisfy 
\[
[B,C] = A, \ \ [A,B] = 2B, \ \ [A,C] = -2C,
\]
$S \in \mathfrak{h}$ is semisimple and commutes with $A,B$ and $C$, and $N : \mathbb{C}^{4} \to \mathfrak{h}$ is a holomorphic map valued in nilpotent elements which satisfies: 
\[
E(N) = [N,S], \ \ V_{h}(N) = [N, A], \ \ V_{f}(N) = [N,B], \ \ V_{e}(N) = [N,C]. 
\]
\end{example}

\begin{example}
Consider the groupoid of Example \ref{symmetricborellinearfree} 
\[
S_{2} \rtimes \tilde{B}_{2} = \Pi(\mathbb{C}^3, D_{2}),
\]
where $D_{2}$ is given by the vanishing of $f_{2} = x(y^2 - xz)$. The Lie algebroid is the logarithmic tangent bundle $T_{\mathbb{C}^3}(-\log D_{2})$, which has global frame consisting of the vector fields 
\[
E_{1} = 2 x \partial_{x} + y \partial_{y}, \ \ E_{2} = y \partial_{y} + 2 z\partial_{z}, \ \ V = x \partial_{y} + 2y \partial_{z}. 
\]
These vector fields satisfy 
\[
[E_{1}, E_{2}] = 0, \ \ [E_{1}, V] = V, \ \ [E_{2}, V] = -V.
\]
The dual logarithmic $1$-forms are 
\[
\alpha_{1} = \frac{1}{2}d\log x, \ \ \alpha_{2} = \frac{1}{2}d\log(xz-y^2) - \frac{1}{2}d\log x , \ \ \beta = \frac{1}{2x(xz-y^2)}(3xzdy - d(xyz)),
\]
which satisfy 
\[
d\alpha_{1} = 0, \ \ d\alpha_{2} = 0, \ \ d \beta = (\alpha_{2} - \alpha_{1}) \wedge \beta. 
\]
Applying Theorem \ref{linearalgrepnormalform}, we obtain the following normal form for flat connections on $\mathbb{C}^3$ which have logarithmic singularities along $D_{2}$: 
\[
\nabla = d + (S_{1} + N_{1}) \otimes \alpha_{1} + (S_{2} + N_{2}) \otimes \alpha_{2} + B \otimes \beta,
\]
where $S_{1}, S_{2} \in \mathfrak{h}$ are commuting semisimple elements, $N_{1}, N_{2} : \mathbb{C}^3 \to \mathfrak{h}$ are holomorphic maps valued in nilpotent elements and which commute, and $B : \mathbb{C}^3 \to \mathfrak{h}$ is a holomorphic map. These data satisfy the following equations 
\begin{align*}
E_{1}(B) &= [B, S_{1}] + B, \ \ E_{2}(B) = [B, S_{2}] - B, \\
E_{i}(N_{j}) &= [N_{j}, S_{i}] \text{ for all } 1 \leq i,j \leq 2, \\
V(N_{1}) &= [N_{1}, B], \ \ V(N_{2}) = [N_{2}, B]. 
\end{align*}
\end{example}

\subsubsection{Sekiguchi's free divisor} \label{Sekinormalform}
In this section, we continue studying the free divisor $D \subseteq \mathbb{C}^3$ considered in Section \ref{SekiDiv}. Recall that it is defined as the vanishing locus of the polynomial $F_{B,5} = xy^4 + y^3z + z^3$, and that the logarithmic tangent bundle $T_{\mathbb{C}^3}(- \log D)$ has a basis given by vector fields $E, V, W$. The dual logarithmic $1$-forms are respectively given by 
\begin{align*}
\alpha &= \frac{y (3y^3 + 4z^2)}{3 F_{B,5}}dx + \frac{z (16 xz - 3y^2)}{3 F_{B,5}}dy + \frac{z^2 + 3y^3 - 12xyz}{3 F_{B,5}}dz \\ 
\beta &= \frac{y^2 z}{6 F_{B,5}}dx + \frac{z(4xy + 3z)}{6 F_{B,5}}dy - \frac{y (3xy + 2z)}{6 F_{B,5}}dz \\
\gamma &= \frac{2y^3 + 3z^2 - 4xyz}{9F_{B,5}} dx - \frac{3yz + xy^2 + 16 x^2 z}{9 F_{B,5}}dy + \frac{12 x^2 y + 2y^2 - xz}{9F_{B,5}}dz. 
\end{align*}
These forms generate the algebra of logarithmic differential forms and they satisfy the following equations 
\[
d \alpha = - 24 z \beta \wedge \gamma, \ \ d\beta = - \alpha \wedge \beta - 6y \beta \wedge \gamma, \ \ d\gamma = -2 \alpha \wedge \gamma + 40 x \beta \wedge \gamma. 
\]
and 
\[
d \log(F_{B,5}) = 9 \alpha -96x \beta - 36y \gamma. 
\]
The connection $1$-form of a logarithmic connection is given by 
\[
\omega = A \otimes \alpha + B \otimes \beta + C \otimes \gamma,
\]
where $A, B, C : \mathbb{C}^3 \to \mathfrak{h}$ are holomorphic. The Maurer-Cartan equation is then given by the following system of coupled equations 
\begin{align*}
& E(B) - V(A) - B + [A,B] = 0 \\
& E(C) - W(A) - 2C + [A,C] = 0 \\ 
& V(C) - W(B) - 24zA - 6yB + 40 x C + [B,C] = 0. 
\end{align*}
We obtain the following normal form theorem. 
\begin{theorem}\label{sekinormalform}
Let $\nabla = d + \omega$ be a flat connection on $\mathbb{C}^3$ with logarithmic singularity along the zero locus of $F_{B,5}$. There is a gauge transformation which puts $\omega$ into the following normal form 
\[
\omega = (S + N) \otimes \alpha + (B - \frac{32}{3}xN) \otimes \beta + (C - 4y N) \otimes \gamma, 
\]
where $S \in \mathfrak{h}$ is a semisimple element, $N: \mathbb{C}^3 \to \mathfrak{h}$ is a holomorphic family of nilpotent elements, and $B, C : \mathbb{C}^3 \to \mathfrak{h}$ are holomorphic maps. These data satisfy the following equations 
\begin{enumerate}
\item $E(B) = B + [B, S]$ \label{seki1}
\item $E(C) = 2C + [C,S] $ \label{seki2} 
\item $E(N) = [N, S]$ \label{seki3}
\item $V(N) = [N,B]$ \label{seki4} 
\item $W(N) = [N,C]$ \label{seki5} 
\item $V(C) - W(B) = 24z S + 6yB - 40 x C - [B,C].$ \label{seki6}
\end{enumerate}
The semisimple residue of this connection is given by $S$. 
\end{theorem}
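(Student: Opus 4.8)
\emph{Proof proposal.} The plan is to deduce this theorem as a direct instance of the general classification of Section \ref{normalformtheorems}, applied to the positive homogeneous groupoid $\mathcal{G} = \Pi(\mathbb{C}^3, D)$. Since $D$ is the vanishing locus of the weighted homogeneous polynomial $F_{B,5}$ in the vector space $\mathbb{C}^3$ carrying the positive-weight $\mathbb{C}^*$-action generated by $E$, Theorem \ref{whgconst} and the remark following it show that $\Pi(\mathbb{C}^3, D)$ is a positive homogeneous groupoid with $k=1$. I take $G = \mathbb{C}^*$ (so $\tilde G = \mathbb{C}$ and the semisimple factor $S$ is trivial, whence $\chi = 0$), acting on $\mathbb{C}^3$ with weights $(1,2,3)$; the required injective homomorphism $r : \tilde G \ltimes \mathbb{C}^3 \to \mathcal{G}$ is simply the structure map $i$, which is injective because the source map separates base points and $\pi \circ i = p$ separates the $\mathbb{C}$-component. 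Although $T_{\mathbb{C}^3}(-\log D)$ is not an action algebroid (Corollary \ref{notaction}), the machinery of Section \ref{normalformtheorems} applies to the Lie algebroid of any positive homogeneous groupoid, so this causes no difficulty.

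Next I would read off the structural data of Section \ref{StructureofLieAlg} from the given frame. The fibre $A_0$ of $A = T_{\mathbb{C}^3}(-\log D)$ over the origin is the isotropy Lie algebra $\langle E_0, V_0, W_0\rangle$, and I set $\mathfrak{g} = \mathbb{C} E_0$, $W = \mathbb{C} V_0 \oplus \mathbb{C} W_0$. Because $[E,V]=V$ and $[E,W]=2W$ hold exactly, $W$ is a $\mathbb{C}^*$-invariant complement spanned by weight vectors of weights $m_1 = 1$ (for $V_0$) and $m_2 = 2$ (for $W_0$), so the frame $E, V, W$ already realises the adapted algebraic normal form produced by Lemma \ref{adaptedlinearization}. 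The remaining bracket $[V,W] = 24z\,E + 6y\,V - 40x\,W$ records the polynomial structure constants $\alpha_{12}^1 = 24z$, $\gamma_{12}^1 = 6y$, $\gamma_{12}^2 = -40x$, with no $\mathfrak{s}$-terms, and the anchor gives $Z_1 = V$, $Z_2 = W$. The semisimple residue $F : \mathbb{C} \to H$ is determined by $dF(e) = S$, which is semisimple because it exponentiates to the semisimple monodromy.

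It then remains to substitute this data into Theorems \ref{MainClassificationTheorem} and \ref{AlgebraicModuliStack}. The coframe dual to $(E,V,W)$ is $(\alpha,\beta,\gamma)$, so the normal form $\nabla_{B,N}$ displayed at the end of Section \ref{sectionalgebraicmodulistack} becomes
\[
\nabla_{B,N} = d + (S+N)\otimes\alpha + \bigl(B + d\pi(V)\,N\bigr)\otimes\beta + \bigl(C + d\pi(W)\,N\bigr)\otimes\gamma,
\]
where $B = B_1$, $C = B_2$, $N = N_1$, and $\pi$ integrates $\frac{1}{9}d\log(F_{B,5})$; since $\frac{1}{9}d\log(F_{B,5}) = \alpha - \frac{32}{3}x\,\beta - 4y\,\gamma$, evaluating on $V$ and $W$ gives $d\pi(V) = -\frac{32}{3}x$ and $d\pi(W) = -4y$, which is exactly the stated normal form. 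The defining equations of $X_F$ now translate term by term: the $E$-degree eigenvalue equations $E(B_i) = m_i B_i + [B_i, S]$ from Lemma \ref{Affine1} give Equations \ref{seki1} and \ref{seki2}; the equation $E(N) = [N,S]$ from Lemma \ref{Affine2} gives \ref{seki3}; the constraints $Z_i(N_j) = [N_j, B_i]$ give \ref{seki4} and \ref{seki5}; the quadratic curvature equation \ref{quadcurvatureeq} for $(i,j) = (1,2)$ gives $[B,C] + V(C) - W(B) - 6y\,B + 40x\,C = 24z\,S$, i.e. Equation \ref{seki6}; and nilpotency of $N$ is imposed directly, while the mutual-commuting conditions on the $N_i$ are vacuous since $k=1$. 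The assertion about the semisimple residue is immediate from its definition.

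I do not expect a genuine conceptual obstacle: the substantive work — the Jordan--Chevalley decomposition (Theorem \ref{JCdecomposition}), the linearization (Theorem \ref{mainlinearizationthm}), and the algebraicity of $X_F$ and of $Aut(p^*F)$ — has already been carried out in Sections \ref{ReptheoryWHG} and \ref{normalformtheorems}. What needs care is the bookkeeping: checking that the explicit frame $E,V,W$ is indeed adapted in the sense of Section \ref{StructureofLieAlg}, respecting the sign convention $[X,Y]_c = -[X,Y]$ when matching brackets, and correctly expressing $d\pi$ in the logarithmic coframe. As an optional consistency check one may verify directly that, after the gauge transformation, the Maurer--Cartan system written just above the theorem for $\omega = A\otimes\alpha + B\otimes\beta + C\otimes\gamma$ reduces to Equations \ref{seki1}--\ref{seki6} together with nilpotency of $N$.
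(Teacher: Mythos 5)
Your proposal is correct and follows essentially the same route as the paper: Theorem \ref{sekinormalform} is obtained there precisely by specializing the general normal form $\nabla_{B,N}$ of Section \ref{sectionalgebraicmodulistack} (via Theorems \ref{JCdecomposition}, \ref{mainlinearizationthm} and \ref{MainClassificationTheorem}) to the positive homogeneous groupoid $\Pi(\mathbb{C}^3,D)$ with $k=1$, trivial semisimple factor, the adapted frame $E,V,W$, and $d\pi = \tfrac{1}{9}d\log F_{B,5} = \alpha - \tfrac{32}{3}x\,\beta - 4y\,\gamma$. Your bookkeeping (weights $m_1=1$, $m_2=2$, structure functions $\alpha_{12}^1=24z$, $\gamma_{12}^1=6y$, $\gamma_{12}^2=-40x$, and the translation of the defining equations of $X_F$ into Equations \ref{seki1}--\ref{seki6}) matches the paper's computation.
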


As a further application of Theorem \ref{sekinormalform} we show that the logarithmic tangent bundle is not an action algebroid. 
\begin{corollary} \label{notaction}
Let $D \subset \mathbb{C}^3$ be the vanishing locus of $F_{B,5} = xy^4 + y^3z + z^3$. Then $T_{\mathbb{C}^3}(-\log D)$ is not isomorphic to an action algebroid. 
\end{corollary}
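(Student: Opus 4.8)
The plan is to argue by contradiction, exploiting the classification of flat connections in Theorems~\ref{linearalgrepnormalform} and~\ref{sekinormalform}. Suppose $T_{\C^3}(-\log D)$ were isomorphic to an action algebroid $\mathfrak{g}\ltimes\C^3$. The anchor of $T_{\C^3}(-\log D)$ is its inclusion into $T\C^3$, and it vanishes exactly at the origin, since the Euler field $E$ does. For an action algebroid the fibre over a point where the anchor vanishes is canonically $\mathfrak{g}$ with its own bracket (it is the isotropy there), so $\mathfrak{g}$ must be isomorphic to the isotropy Lie algebra of $T_{\C^3}(-\log D)$ at $0$, which by Section~\ref{SekiDiv} is the three-dimensional solvable Lie algebra $\mathfrak{r}$ with $[E_{0},V_{0}]=V_{0}$, $[E_{0},W_{0}]=2W_{0}$, $[V_{0},W_{0}]=0$; that is, $\mathfrak{r}=\C^{2}\rtimes\C$ with $\C$ acting on $\C^{2}$ with weights $1$ and $2$. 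Thus $\mathfrak{r}=\mathrm{Lie}(G)$ for the algebraic group $G=\C^{2}\rtimes\C^{*}$, with unipotent radical $\C^{2}$ and reductive part the $\C^{*}$ acting with positive weights, and $\Pi(\C^{3},D)\cong\widetilde{G}\ltimes\C^{3}$ is a positive homogeneous groupoid precisely of the type of Section~\ref{algebraicgroupex}, with $k=1$, trivial semisimple factor and $W=\mathfrak{n}=\C^{2}$.

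Next I would invoke the rigidity established in Sections~\ref{StructureofLieAlg} and~\ref{SectionClassofrep}: after replacing the isomorphism by a suitable one, the reductive generator $E_{0}$ is carried to the Euler field $E$, and (by the normalisation proposition for $\pi$) the closed logarithmic $1$-form $\pi=\tfrac{1}{9}d\log F_{B,5}$ becomes the coordinate $e^{1}$ dual to $E_{0}$. Then the images $V',W'$ of the constant sections $V_{0},W_{0}\in\mathfrak{n}$ are global logarithmic vector fields forming, together with $E$, a frame of $T_{\C^{3}}(-\log D)$ and satisfying
\[
[E,V']=V',\qquad [E,W']=2W',\qquad [V',W']=0,\qquad V'(F_{B,5})=W'(F_{B,5})=0 .
\]
(Equivalently, Theorem~\ref{linearalgrepnormalform} would then give a normal form for flat logarithmic connections in which no multiple of $N$ by a non-constant function appears off the Euler direction, in apparent tension with the correction terms $-\tfrac{32}{3}xN$, $-4yN$ of Theorem~\ref{sekinormalform}.)

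To reach the contradiction I would solve these constraints on the frame. Writing $V'=aE+bV+cW$ and imposing $[E,V']=V'$ forces, by positivity of the weights, $c=0$, $b$ constant and $a$ of $E$-degree $1$; evaluating at the origin gives $b=1$, so $a=a_{1}x$, and $V'(F_{B,5})=0$ gives $a_{1}=\tfrac{32}{3}$, hence $V'=\tfrac{32}{3}xE+V$. In the same way one finds $W'=\bigl(\tfrac{32}{3}\beta_{0}x^{2}+4y\bigr)E+\beta_{0}xV+W$ for a single parameter $\beta_{0}\in\C$. Now compute $[V',W']$ using $[E,V]=V$, $[E,W]=2W$, $[V,W]=24zE+6yV-40xW$ and the Leibniz rule: its $E$-component is
\[
\tfrac{1024}{9}\,\beta_{0}\,x^{3}\;+\;\bigl(\tfrac{64}{3}\beta_{0}-\tfrac{160}{3}\bigr)\,xy ,
\]
which is non-zero for every $\beta_{0}$ (the $x^{3}$ coefficient forces $\beta_{0}=0$, and then the $xy$ coefficient equals $-\tfrac{160}{3}\neq 0$). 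This contradicts $[V',W']=0$, so no such action algebroid can exist.

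The bracket computation in the last step is routine. The step I expect to require the most care is the second paragraph: one must show that the abstract algebroid isomorphism $T_{\C^{3}}(-\log D)\cong\mathfrak{r}\ltimes\C^{3}$ may be chosen compatibly with the canonical homogeneous-groupoid structure of Theorem~\ref{whgconst} --- namely that the distinguished $\C^{*}$ is the Euler action and that $\tfrac{1}{9}d\log F_{B,5}$ is the projection onto the reductive factor --- so that the frame $\{E,V',W'\}$ genuinely has the four properties used above.
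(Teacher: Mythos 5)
Your overall strategy (contradiction via a global frame $E',V',W'$ of $T_{\mathbb{C}^3}(-\log D)$ realizing the isotropy brackets) is legitimate and different from the paper's, but the second paragraph contains a genuine gap, and it is exactly the step your final computation depends on. The results you cite do not deliver the normalizations $E'=E$ and $V'(F_{B,5})=W'(F_{B,5})=0$. Sections \ref{StructureofLieAlg} and \ref{SectionClassofrep} operate in a set-up where the morphism $r:\tilde{G}\ltimes V\to\mathcal{G}$ is \emph{assumed} to extend $i$, i.e.\ the reductive generator is already required to map to the Euler field and to act by the given linear $\mathbb{C}^*$-action; they cannot be used to adjust an abstract isomorphism $T_{\mathbb{C}^3}(-\log D)\cong\mathfrak{r}\ltimes\mathbb{C}^3$ so that the image $E'$ of the reductive generator becomes $E$ (a priori $E'$ is only a logarithmic vector field equal to $E_0$ modulo the derived algebra at $0$, and straightening it to $E$ by a biholomorphism preserving $D$ is a nontrivial linearization statement not provided by the paper). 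Likewise the normalization proposition for $\pi$ goes the wrong way: it replaces $\pi$ by $\pi+\delta(f)$, which at the infinitesimal level only gives $V'(F_{B,5})=-9F_{B,5}\,V'(f)$ for some invariant $f$, not $V'(F_{B,5})=0$; and the identification $\Pi(\mathbb{C}^3,D)\cong\tilde{G}\ltimes\mathbb{C}^3$ would additionally require completeness of the action vector fields, which an isomorphism of \emph{algebroids} does not give (this is precisely the algebroid/groupoid distinction illustrated by the cusp in Example \ref{cuspsinggroup}).

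The gap is material, not cosmetic. Even granting $E'=E$, the relations $[E,V']=V'$ and $[E,W']=2W'$ only force $V'=a_1xE+V$ and $W'=(\alpha_1x^2+\alpha_2y)E+\beta_0xV+W$ (after rescaling), and without $V'(F_{B,5})=W'(F_{B,5})=0$ the $E$-component of $[V',W']$ equals
\[
a_1\alpha_1x^3+\bigl(a_1\alpha_2-2a_1\beta_0+4\alpha_1-24\alpha_2\bigr)xy+\bigl(2\alpha_2-3a_1+24\bigr)z,
\]
which does vanish for suitable $(a_1,\alpha_1,\alpha_2,\beta_0)$; so your displayed contradiction only rules out the two-parameter subfamily you normalized to, and the general case would require either justifying the normalizations or analyzing all components of $[V',W']$. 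For comparison, the paper avoids this entirely: it pulls back an explicit $2$-dimensional representation of the isotropy algebra along the projection $\mathfrak{g}\ltimes\mathbb{C}^3\to\mathfrak{g}$ of the hypothetical action algebroid, puts the resulting flat logarithmic connection into the normal form of Theorem \ref{sekinormalform}, and shows the resulting algebraic system on the parameters is overdetermined and inconsistent — no normalization of the frame, and no identification of $E'$ with $E$, is ever needed. Your computation of $[V',W']$ under your stated hypotheses is correct; what is missing is a proof of those hypotheses, or a version of the frame argument that does not assume them.
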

\begin{proof}
We argue by contradiction. Hence assume that $T_{\mathbb{C}^3}(-\log D) \cong \mathfrak{g} \ltimes \mathbb{C}^3$, for some Lie algebra $\mathfrak{g}$. Since the anchor map vanishes at the origin, $\mathfrak{g}$ is isomorphic to the isotropy Lie algebra $T_{\mathbb{C}^3}(-\log D)|_{0}$. Recall from Section \ref{SekiDiv} that this algebra is the span of $3$ elements $X, Y, Z$ with the following brackets
\[
[X,Y] = Y, \ \ [X, Z] = 2Z, \ \ [Y,Z] = 0. 
\]
Consider the following $2$-dimensional representation of $\mathfrak{g}$: 
\[
X = \begin{pmatrix} 0 & 0 \\ 0 & 1 \end{pmatrix}, \ \ Y = \begin{pmatrix} 0 & 1 \\ 0 & 0 \end{pmatrix}, \ \ Z = \begin{pmatrix} 0 & 0 \\ 0 & 0 \end{pmatrix}. 
\]
Pulling back the representation by the projection morphism $\mathfrak{g} \ltimes \mathbb{C}^3 \to \mathfrak{g}$ gives a $GL(2, \mathbb{C})$-representation of $T_{\mathbb{C}^3}(-\log D)$. This representation has semisimple residue given by $S = X$, and it has a reduction of structure to the subgroup $G_{A}$ integrating the $2$-dimensional non-abelian Lie algebra. We now apply Theorem \ref{sekinormalform} to put this connection into normal form. Hence it is given by matrices $B, C$ and $N$ satisfying a system of equations. Applying Equations \ref{seki1}, \ref{seki2} and \ref{seki3}, along with the value of the connection on the fibre $T_{\mathbb{C}^3}(-\log D)|_{0}$ gives the following 
\[
B = \begin{pmatrix} B_{11}^{(1)} & 1 \\ B_{21}^{(2)} & B_{22}^{(1)} \end{pmatrix}, \ \ C = \begin{pmatrix} C_{11}^{(2)} & C_{12}^{(1)} \\ C_{21}^{(3)} & C_{22}^{(2)} \end{pmatrix}, \ \ N = \begin{pmatrix} 0 & 0 \\ N_{21}^{(1)} & 0 \end{pmatrix},
\]
where the superscripts denote the $E$-degree. By Equation \ref{seki4}, we see that $N = 0$, implying that the monodromy is trivial. The remaining matrices $B$ and $C$ are described by $12$ complex parameters, and the remaining Equation \ref{seki6} translates to $12$ algebraic equations that the parameters must satisfy (Equation \ref{seki5} is trivially satisfied). 

In fact, the linearization for the representation can be chosen so that it respects the reduction of structure to $G_{A}$. Hence, in the expressions for $B$ and $C$, we may assume that $B_{11} = B_{21} = C_{11} = C_{21} = 0$. There now remains only $4$ complex parameters: 
\[
B_{22} = a_{22}x, \ \ C_{12} = dx, \ \ C_{22} = c_{22} x^2 + f_{22}y,
\]
and there remains only $5$ non-trivial equations: 
\[
2d = 6 + f_{22}, \ \ c_{22} = d(40 + a_{22}), \ \ 4c_{22} + 16f_{22} = 6a_{22}, \ \ 2f_{22} - 3a_{22} = 24, \ \ c_{22} = 0. 
\]
This system is now over-determined, and it is straightforward to check that it does not admit any solutions. We thus arrive at a contradiction. 
\end{proof}

 \bibliographystyle{plain}

 \bibliography{bibliography.bib}

\end{document}